\setlist{itemsep=0pt, topsep=0pt}
\newcommand{\floor}[1]{\lfloor#1\rfloor}
\newcommand{\ceiling}[1]{\lceil#1\rceil}
\newcommand{\tbf}[1]{\textbf{#1}}
\newtheorem{theorem}{Theorem}[section]
\newtheorem{corollary}[theorem]{Corollary}
\newtheorem{lemma}[theorem]{Lemma}
\newtheorem{claim}[theorem]{Claim}
\newtheorem{proposition}[theorem]{Proposition}
\newtheorem{observation}[theorem]{Observation}
\newtheorem{example}[theorem]{Example}
\newtheorem{problem}[theorem]{Problem}
\newtheorem{conjecture}[theorem]{Conjecture}
\newenvironment{proofclaim}[1][Proof of claim]{\begin{proof}[#1]}{\end{proof}}
\newcommand{\gnp}{G(n,p)}
\newcommand{\hnpr}{H^r(n,p)}
\newcommand{\mc}{\mathrm{mc}}
\newcommand{\halpha}{\hat{\alpha}}
\newcommand{\of}[1]{\left( #1 \right)}
\title{Large monochromatic components in expansive hypergraphs}
\author{Deepak Bal\thanks{Department of Mathematics, Montclair State University, Montclair, NJ {\tt deepak.bal@montclair.edu}.}  \and Louis DeBiasio\thanks{Department of Mathematics, Miami University, Oxford, OH. \texttt{debiasld@miamioh.edu}. Research supported in part by NSF grant DMS-1954170.}}
\date{\today}
\begin{document}

\maketitle

\begin{abstract}
A result of Gy\'arf\'as \cite{Gy} exactly determines the size of a largest monochromatic component in an arbitrary $r$-coloring of the complete $k$-uniform hypergraph $K_n^k$ when $k\geq 2$ and $r-1\leq k\leq r$.  We prove a result which says that if one replaces $K_n^k$ in Gy\'arf\'as' theorem by any ``expansive'' $k$-uniform hypergraph on $n$ vertices (that is, a $k$-uniform hypergraph $H$ on $n$ vertices in which in which $e(V_1, \dots, V_k)>0$ for all disjoint sets $V_1, \dots, V_k\subseteq V(H)$ with $|V_i|>\alpha$ for all $i\in [k]$), then one gets a largest monochromatic component of essentially the same size (within a small error term depending on $r$ and $\alpha$).  As corollaries we recover a number of known results about large monochromatic components in random hypergraphs and random Steiner triple systems, often with drastically improved bounds on the error terms.

Gy\'arf\'as' result is equivalent to the dual problem of determining the smallest maximum degree of an arbitrary $r$-partite $r$-uniform hypergraph with $n$ edges in which every set of $k$ edges has a common intersection.  In this language, our result says that if one replaces the condition that every set of $k$ edges has a common intersection with the condition that for every collection of $k$ disjoint sets $E_1, \dots, E_k\subseteq E(H)$ with $|E_i|>\alpha$ for all $i\in [k]$ there exists $e_i\in E_i$ for all $i\in [k]$ such that $e_1\cap \dots \cap e_k\neq \emptyset$, then the maximum degree of $H$ is essentially the same  (within a small error term depending on $r$ and $\alpha$).  We prove our results in this dual setting.  
\end{abstract}

\section{Introduction}

We say that a hypergraph $H$ is connected if the 2-shadow of $H$ is connected (the 2-shadow of $H$ is the graph on vertex set $V(H)$ and edge set $\{e\in \binom{V(H)}{2}: \exists f\in E(H), e\subseteq f\}$).  A component in a hypergraph is a maximal connected subgraph.  Given a hypergraph $H$ and a positive integer $r$, let $\mc_r(H)$ be the largest integer $t$ such that every $r$-coloring of the edges of $H$ contains a monochromatic component of order at least $t$.  Let $K_n^k$ denote the complete $k$-uniform hypergraph on $n$ vertices (and $K_n = K_n^2$ as usual).  A well-studied problem has been determining the value of $\mc_r(K_n^k)$; however, this problem is still open for most values of $r$ and $k$.  

On the other hand, Gy\'arf\'as proved the following well-known results.

\begin{theorem}[Gy\'arf\'as \cite{Gy}]~\label{thm:Gy}
\begin{enumerate}
\item For all $n\geq r\geq 2$, $\mc_r(K_n)\geq \frac{n}{r-1}$.  This is best possible when $(r-1)^2$ divides $n$ and there exists an affine plane of order $r-1$.  
\item For all $n\geq r\geq 2$, $\mc_r(K_n^r)=n$.
\item For all $n\geq r\geq 4$, $\mc_{r}(K_n^{r-1})\geq \frac{(r-1)n}{r}$.  This is best possible for all such $r$ and $n$.  
\end{enumerate}
\end{theorem}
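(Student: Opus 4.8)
The plan is to attack all three parts through the dual reformulation highlighted in the abstract, which trades a coloring for an $r$-partite $r$-uniform hypergraph. Given an $r$-coloring of $K_n^k$, for each color $c$ the color-$c$ components partition $V=[n]$; assign to each vertex $x$ the edge $f_x=\{D_1(x),\dots,D_r(x)\}$, where $D_c(x)$ is the color-$c$ component of $x$, regarded as a vertex in the $c$-th part. This produces an $r$-partite $r$-uniform hypergraph $H$ with $n$ edges in which the degree of a vertex (a component) equals its order, so $\max$-degree equals the largest monochromatic component. Since any $k$ vertices $x_1,\dots,x_k$ span an edge of $K_n^k$, they lie in a common color-$c$ component, so the edges $f_{x_1},\dots,f_{x_k}$ share a vertex; that is, $H$ is $k$-wise intersecting. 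This correspondence is reversible, so I would reduce each statement to a lower bound on $\Delta(H)$ over $k$-wise intersecting $r$-partite $r$-uniform $H$ with $n$ edges, and read off tightness from explicit $H$ (equivalently, explicit colorings).

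For (ii), where $k=r$, I would prove the Helly-type fact that an $r$-wise intersecting such $H$ has a vertex lying in all $n$ edges (whence $\Delta=n$; the bound $\Delta\le n$ is trivial). Group the edges of $H$ by their coordinate in part $c$ to get a partition $\mathcal{Q}_c$ of $E(H)$; a common vertex in part $c$ is exactly a trivial $\mathcal{Q}_c$. If every $\mathcal{Q}_c$ were nontrivial, I would coarsen each to a nontrivial bipartition and invoke the following crossing lemma: $r\ge 2$ nontrivial bipartitions of a set admit a set of size $\le r$ meeting both sides of each. This follows by induction on $r$ — handle two bipartitions directly, and to incorporate a new bipartition add at most one vertex on the side currently missed. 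Padding this set to exactly $r$ edges (possible as $n\ge r$) yields $r$ edges that, in every part, use at least two different vertices and hence have no common vertex, contradicting $r$-wise intersection. Translated back, no coloring of $K_n^r$ can have every color disconnected, giving $\mc_r(K_n^r)=n$.

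For (i) and (iii) the intersection hypothesis is weaker, so I would pass to fractional covers. Summing the defining inequalities $\sum_{v\in e}w_v\ge 1$ of a minimum fractional vertex cover over all $n$ edges gives $n\le\sum_v w_v\deg_H(v)\le\Delta(H)\,\tau^*(H)$, i.e. $\Delta(H)\ge n/\tau^*(H)$, where $\tau^*(H)$ is the fractional cover number. Thus (i) follows from the bound $\tau^*(H)\le r-1$ for pairwise intersecting $r$-partite $r$-uniform $H$, and (iii) from the stronger bound $\tau^*(H)\le r/(r-1)$ available under $(r-1)$-wise intersection. Proving these fractional-cover bounds is exactly the main obstacle: the naive estimates (covering pairs, or bounding the fractional matching by a single edge) only yield $n/r$ and $\tau^*\le r$ respectively, so one must genuinely exploit the $r$-partite structure, and the extremal configurations are rigid — here I expect to invoke (or reprove) Füredi's theorem that intersecting $r$-partite $r$-uniform families satisfy $\tau^*\le r-1$. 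For tightness I would exhibit the matching colorings: for (i), color each edge of $K_{(r-1)^2}$ by the parallel class of the unique line of an affine plane of order $r-1$ through it, making every color a disjoint union of $r-1$ cliques of order $r-1$; for (iii), split $[n]$ into $r$ near-equal parts $V_1,\dots,V_r$ and color each $(r-1)$-set $e$ by the least index $i$ with $e\cap V_i=\emptyset$ (which exists since $r-1$ vertices miss some part), so color $i$ lives on $[n]\setminus V_i$ and every component has order at most $(r-1)n/r$.
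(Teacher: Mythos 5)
A preliminary remark: the paper never proves Theorem \ref{thm:Gy} --- it is quoted from Gy\'arf\'as \cite{Gy} --- so the fair comparison is with the machinery the paper builds around it, namely the duality (Observation \ref{dual:r}), F\"uredi's fractional-cover argument (Section \ref{sec:Furedi}), and the $\nu=0$ specializations of Theorems \ref{dual:rrnur} and \ref{dual:rrnur-1}. Your dual setup coincides exactly with Observation \ref{dual:r}. Your part (ii) is complete and correct: the reduction to the Helly-type statement, the crossing lemma for nontrivial bipartitions (two bipartitions handled directly, then one added element per further bipartition), and the padding step all work, and this gives an elementary self-contained proof that the paper itself does not contain. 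Your part (i) is precisely the route of Section \ref{sec:Furedi}: $\Delta(H)\geq n/\tau^*(H)$ together with F\"uredi's theorem $\tau^*\leq(r-1)\nu$ for $r$-partite hypergraphs; invoking that published theorem is legitimate (the paper does the same), and your affine-plane coloring for tightness is the standard one (for general $n$ divisible by $(r-1)^2$ one should also blow up each point, a detail you omit).

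The genuine gap is part (iii). Everything there rests on the bound $\tau^*(H)\leq r/(r-1)$ for $(r-1)$-wise intersecting $r$-partite $r$-uniform (multi-)hypergraphs, which you describe as ``available'' but neither prove nor attribute: F\"uredi's theorem, the only result you name, gives $\tau^*\leq r-1$ under pairwise intersection, which yields only $\Delta\geq n/(r-1)$, far short of $(r-1)n/r$. Worse, this fractional bound is not a stepping stone toward part (iii) but is \emph{equivalent} to it: given any such $H$, take an optimal rational fractional matching $w$, scale by a common denominator $M$, and replace each edge $e$ by $Mw_e$ parallel copies; the blow-up is still $(r-1)$-wise intersecting, has $n'=M\nu^*$ edges and $\Delta'\leq M$, so part (iii) applied to it forces $M\geq (r-1)M\nu^*/r$, i.e.\ $\tau^*=\nu^*\leq r/(r-1)$. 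Hence ``invoking'' the bound is circular unless you locate it in the literature, and reproving it is exactly the content of Gy\'arf\'as' theorem --- the hard case, as the paper's own Proposition \ref{fatedgetodegreer} (the $\nu=0$ analogue of what you need) illustrates. A concrete symptom that an idea is missing: the bound is \emph{false} for $r=3$ (the dual of the proper $3$-edge-coloring of $K_4$ is a pairwise intersecting $3$-partite $3$-uniform hypergraph with $4$ edges and $\tau^*=2>3/2$), so any proof must use $r\geq 4$ in an essential way, and nothing in your outline does. Your tightness construction for (iii) is correct.
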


A natural question which has received attention lately has been to determine conditions under which a $k$-uniform hypergraph $G$ on $n$ vertices satisfies $\mc_r(G)= \mc_r(K_n^k)$ or $\mc_r(G)\geq (1-o(1))\mc_r(K_n^k)$, or if this is too restrictive, determining the value of $\mc_r(G)$ in terms of some natural parameters of $G$.   

Perhaps the first such result is due to F\"uredi \cite{F} who proved that for all graphs $G$ on $n$ vertices, $\mc_r(G)\geq \frac{n}{(r-1)\alpha(G)}$, which is best possible when an affine plane of order $r-1$ exists (see Section \ref{sec:Furedi} for more details).  But note that the value of $\mc_r(G)$ is far from $\mc_r(K_n)$ in this case.  In a sense that will be made precise in the coming pages, our paper is essentially a variant on F\"uredi's result using a different (but related) parameter in place of independence number for which we can guarantee that $\mc_r(G)$ is close $\mc_r(K_n)$.

Note that for $1\leq r\leq k$, $\mc_r(G)=n= \mc_r(K_n^k)$ if and only if the $r$-shadow of $G$ is complete\footnote{Indeed, if every $r$-set of $G$ is contained an edge, then since $\mc_r(K_n^r)=n$, we have $\mc_r(G)=n$.  Furthermore, if some $r$-set $\{x_1, \dots, x_r\}$ is not contained in an edge, then we can color the edges of $G$ with $r$-colors such that color $i$ is never used on $x_i$ and thus $\mc_r(G)<n$ (c.f. Observation \ref{mc2_upper}).}.  On the other hand, as first noted by Gy\'arf\'as and S\'ark\"ozy \cite{GSc}, when $r>k=2$ it is surprisingly possible for $\mc_r(G)= \mc_r(K_n)$ provided $G$ has large enough minimum degree.  See \cite{GSc}, \cite{R}, and \cite{FL} for the best known results on this minimum degree threshold in the case $k=2$, and \cite{BD2} for a precise result on the minimum codegree threshold in the case $r=k+1\geq 4$.  

For hypergraphs, Bennett, DeBiasio, Dudek, and English \cite{BDDE} proved that if $G$ is an $(r-1)$-uniform hypergraph on $n$ vertices with $e(G)\geq (1-o(1))\binom{n}{r-1}$, then $\mc_{r-1}(G)\geq (1-o(1))n$ and $\mc_{r}(G)\geq (\frac{r-1}{r}-o(1))n$.  

As for random graphs, it was independently determined in \cite{BD1}, \cite{DP2} that with high probability\footnote{An event is said to happen \emph{with high probability} or \emph{w.h.p.} if the probability that the event occurs tends to 1 as $n\to\infty$.}, 
$\mc_r(\gnp)\geq (1-o(1))\frac{n}{r-1}$ provided $p=\frac{\omega(1)}{n}$, and it was determined (using the result mentioned in the previous paragraph) in \cite{BDDE} that $\mc_r(\hnpr)\geq (1-o(1))n$ provided $p=\frac{\omega(1)}{n^{r-1}}$, and $\mc_{r}(H^{r-1}(n,p))\geq (1-o(1))\frac{(r-1)n}{r}$ provided $p=\frac{\omega(1)}{n^{r-2}}$.  All of these results for random graphs use the sparse regularity lemma and thus only provide weak bounds on the error terms.  Additionally, it was determined in \cite{DT} that for almost all Steiner triple systems $S$ on $n$ vertices, $\mc_3(S)=(1-o(1))n$.  In this case, there is an explicit bound on the error term, but their result is specific to 3 colors and 3-uniform hypergraphs in which every pair of vertices is contained in at least one edge.

In this paper, we study a common generalization which implies all of the results from the previous two paragraphs with more precise error terms.

\subsection{Relationship between monochromatic components and partite holes}

Given a hypergraph $G$, a \emph{$k$-partite hole of size $a$} is a collection of pairwise disjoint sets $X_1,\ldots, X_k \subseteq V(G)$  such that $|X_1|=\cdots=|X_k|=a$ and no edge $e\in E(G)$ satisfies $e\cap X_i\neq \emptyset$ for all $i\in [k].$ Define the \emph{$k$-partite hole number} $\alpha_k(G)$ to be the largest integer $a$ such that $G$ contains an $k$-partite hole of size $a$.

We consider the following general problem.

\begin{problem}\label{prob:gen}
Prove that for all integers $r,k\geq 2$, there exists $c_{r,k}, d_{r,k}>0$ such that for all $k$-uniform hypergraphs $G$ on $n$ vertices, if $\alpha_k(G)<c_{r,k} n$, then $$\mc_r(G)\geq \mc_r(K_n^k)-d_{r,k}\alpha_k(G).$$  Furthermore, determine the optimal values of $c_{r,k}, d_{r,k}$.  
\end{problem}

Note that because of the results mentioned above for graphs with large minimum degree, when $r\geq 3$ we can't necessarily get a $d'_{r,2}>0$ such that $\mc_r(G)\leq \mc_r(K_n)-d'_{r,2}\alpha_2(G)$ (because it is possible to have large minimum degree and large $\alpha_2(G)$).  However, when $r=k$, it is the case that $\mc_r(G)\leq \mc_r(K_n^r)-\alpha_r(G)$ (see Observation \ref{mc2_upper}).  

We solve Problem \ref{prob:gen} for all $1\leq r\leq k+1$, give the optimal values of $c_{r,k}, d_{r,k}$ in the case of $k=2=r$, give the optimal value of $c_{r,k}$ in the case $k=3=r$, and give reasonable estimates on $c_{r,k}, d_{r,k}$ in the other cases.  The formal statements are given below.

\begin{theorem}\label{thm:r2nu2}~
\begin{enumerate}
\item 
\begin{enumerate}
\item For all graphs $G$ on $n$ vertices, if $\alpha_2(G)< n/6$, then $\mc_2(G)\geq n-2\alpha_2(G)$. 
\item Furthermore, the bound on $\alpha_2(G)$ is best possible in the sense that there exists a graph on $n$ vertices with $\alpha_2(G)= n/6$ such that $\mc_2(G)=n/3$.
\end{enumerate}
\item For all graphs $G$ on $n$ vertices, $\mc_2(G)\leq n-\alpha_2(G)$.
\item For all integers $n$ and $a$ with $0\leq a\leq n/4$, there exists a graph on $n$ vertices with $\alpha_2(G)=a$ such that $\mc_2(G)\leq n-2a$.
\end{enumerate}
\end{theorem}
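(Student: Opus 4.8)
The plan is to run all four assertions through a single structural device. In any $2$-coloring of $G$, let $A_1,\dots,A_p$ be the red components and $D_1,\dots,D_q$ the blue components, and form the \emph{cells} $C_{ij}=A_i\cap D_j$. The key observation is that if $i\neq i'$ and $j\neq j'$ then there is no edge between $C_{ij}$ and $C_{i'j'}$, since such an edge would be red (forcing $i=i'$) or blue (forcing $j=j'$). Consequently, for any partition of the red components into $R_1\sqcup R_2$ and of the blue components into $B_1\sqcup B_2$, the two ``opposite'' blocks $\bigcup_{i\in R_1,\,j\in B_1}C_{ij}$ and $\bigcup_{i\in R_2,\,j\in B_2}C_{ij}$ form a $2$-partite hole of size equal to the smaller of their two masses. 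Thus a coloring with only small monochromatic components yields, after balancing the cell masses, a large hole; conversely a graph assembled from a prescribed cell pattern has both a cheap coloring and controllable holes.

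\emph{Item 2} I would prove directly from a hole $(X_1,X_2)$ of size $a=\alpha_2(G)$: color an edge blue if it meets $X_2$ and red otherwise. Since no edge meets both $X_1$ and $X_2$, every blue edge lies in $V\setminus X_1$ and every red edge in $V\setminus X_2$, so each blue component has order at most $n-a$ and each red component at most $n-a$, giving $\mc_2(G)\le n-\alpha_2(G)$.

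\emph{Item 1(a)} is the heart of the theorem and where I expect the real work. Assume for contradiction a coloring in which every component has order $<n-2\alpha$, with $\alpha=\alpha_2(G)$; I must produce a hole of size $>\alpha$. Taking $A$ a largest red and $D$ a largest blue component and setting $P=A\setminus D$, $Q=D\setminus A$, $M=A\cap D$, $S=V\setminus(A\cup D)$, one checks that $(P,Q)$ and $(M,S)$ are both holes and that $|A|,|D|<n-2\alpha$ forces $|P|+|S|>2\alpha$ and $|Q|+|S|>2\alpha$. If either hole has both sides $>\alpha$ we are done, so after a color-symmetric normalization the only surviving case is $|P|,|M|\le\alpha$ and $|S|>\alpha$, whence $|A|\le 2\alpha$ and therefore \emph{every} red component has order at most $2\alpha$. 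The main obstacle is exactly this ``bad case''. Here I would isolate the (possibly large) blue component $D$ as one column-side $B_1=\{D\}$, note that the mass outside $D$ exceeds $2\alpha$, and sweep the small red components one at a time between $R_1$ and $R_2$ to find a split for which the opposite blocks $R_1\times B_2$ and $R_2\times B_1$ are simultaneously $>\alpha$. The point is that each red component perturbs the running masses by at most $2\alpha$, so the sweep cannot ``straddle'' the target; it is this control, together with $\alpha<n/6$ (so $2\alpha<n/3$), that I expect to make the final balancing go through and to reveal $n/6$ as the exact threshold.

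\emph{Items 1(b) and 3} I would settle by realizing a chosen nonnegative cell matrix $(c_{ij})$ with $\sum_{i,j}c_{ij}=n$ as a graph: take disjoint cliques $C_{ij}$ of size $c_{ij}$ with red internal edges, join same-row cells by red and same-column cells by blue complete bipartite graphs, and add nothing else. (Making the cells cliques is essential, so that there are no holes inside a cell.) Then the red components are the rows and the blue components the columns, certifying $\mc_2(G)\le\max\{\max_i\sum_j c_{ij},\ \max_j\sum_i c_{ij}\}$, while the only holes are the off-diagonal rectangles, so $\alpha_2(G)$ equals the largest off-diagonal rectangle of $(c_{ij})$. For item 3 the matrix $\left(\begin{smallmatrix}a&a\\a&n-3a\end{smallmatrix}\right)$ has line sums $2a$ and $n-2a$ (all $\le n-2a$ precisely when $a\le n/4$) and largest off-diagonal rectangle $a$, yielding $\alpha_2(G)=a$ and $\mc_2(G)\le n-2a$. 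For item 1(b) the same recipe applies but is more delicate, since the line sums must be $\le n/3$ while the off-diagonal rectangles must be held down to $n/6$: three equal components of one color are forced if only three are used, and these already create a hole of size $2n/9$, so the extremal matrix must use more components with carefully unbalanced sizes. The principal remaining task is to pin down this optimal configuration, verify directly that it has $\alpha_2=n/6$ and admits a coloring with all components of order $n/3$, and confirm that it meets the obstruction in the proof of (a) exactly at $\alpha=n/6$.
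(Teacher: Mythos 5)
Your framework is sound and is in fact exactly the paper's: your cell decomposition $C_{ij}=A_i\cap D_j$ with the ``no edge between $C_{ij}$ and $C_{i'j'}$ when $i\neq i'$, $j\neq j'$'' observation is precisely the duality of Observation \ref{dual:r} (the dual multigraph $H$ has the red components as one side, the blue components as the other, and the cell sizes as edge multiplicities). Your items (ii) and (iii) are correct and essentially identical to Observation \ref{mc2_upper} and Example \ref{mcr_upper}; your $P,Q,M,S$ case analysis for (i)(a) is also correct as far as it goes, and validly reduces to the case where every red component has order at most $2\alpha$. But the heart of (i)(a) --- your ``bad case'' --- has a genuine gap, which you half-acknowledge (``I expect to make the final balancing go through''). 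Concretely: with $B_1=\{D\}$ fixed, the block $R_2\times B_1$ is contained in $D$, so it can never exceed $\alpha$ if $|D|\leq\alpha$; and nothing in your reduction rules this out, since the case analysis bounds only the \emph{red} components ($|D|=|M|+|Q|$, and $|Q|$ can be arbitrarily small). So in the subcase where all monochromatic components of \emph{both} colors are small, no red sweep against the single column $D$ can produce the hole. Moreover, even when $|D|$ is large, the ``perturbs by at most $2\alpha$'' control does not by itself give a prefix with $\mathrm{out}(R_1)>\alpha$ and $\mathrm{in}(R_2)>\alpha$ simultaneously: a minimal family with out-mass exceeding $\alpha$ may consist of many components (e.g.\ each with out-mass $1$ and in-mass $2\alpha-1$) and swallow nearly all of the in-mass, so the sweep needs a carefully chosen order and a separate argument, not just the step-size bound.

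The missing idea, which is how the paper closes this case (Theorem \ref{dual:r2nu2}, Case 2), is to aggregate \emph{blue} components rather than pin $B_1=\{D\}$: since every blue component has order at most $2\alpha$ (the paper gets this by noting that a red and a blue component both of order $\geq 2\alpha+1$ already force a hole of size $\geq \alpha+1$ once no cell exceeds $\alpha$), take an inclusion-minimal family $\mathcal{B}_1$ of blue components of total mass $\geq 2\alpha+1$; minimality gives mass at most $4\alpha$, the complementary family has mass $\geq n-4\alpha\geq 2\alpha+1$, and by pigeonhole one of the two has mass $\geq 3\alpha+1$ --- this is exactly where $\alpha<n/6$ enters. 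Then a halving argument over the red side (Lemma \ref{ballbin_precise}: repeatedly split a minimal set of red components carrying $\geq\alpha+1$ of the first family's mass) yields either a hole of size $\geq\alpha+1$ (contradiction) or a single red component meeting all but $\leq\alpha$ of \emph{each} family's mass, i.e.\ a component of order $\geq n-2\alpha$ --- note this branch delivers the theorem's conclusion directly, so the dichotomy is ``large component or large hole,'' not ``hole only.'' Separately, your item (i)(b) is left unresolved, and your guess that the extremal configuration needs unbalanced cell sizes is off: the paper's Example \ref{ex:grid} with $s=3$, $t=4$ uses twelve \emph{equal} cells of size $n/12$ (three red rows of size $n/3$, four blue columns of size $n/4$), giving $\alpha_2=n/6$ and the coloring bound $\mc_2\leq n/3$; the matching lower bound $\mc_2\geq n/3$ follows from F\"uredi's bound $\mc_2(G)\geq n/\alpha(G)$ since the grid has independence number $3$.
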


\begin{theorem}\label{thm:r3nu2}~
\begin{enumerate}
\item For all graphs $G$ on $n$ vertices, if $\alpha_2(G)\leq \frac{n}{3^9}$, then $\mc_3(G)\geq \frac{n}{2}-2\alpha_2(G)$. 
\item For all $0\leq a\leq n/2$, there exists a graph on $n$ vertices with $\alpha_2(G)=a$ such that $\mc_3(G)\leq \frac{n-a}{2}$.
\end{enumerate}
\end{theorem}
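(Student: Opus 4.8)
For part (2) I would take the short route and exhibit the extremal graph directly: let $G$ be the disjoint union of a clique on $a$ vertices and a clique on $n-a$ vertices, with no edges between them. First I would verify $\alpha_2(G)=a$. In any bipartite hole $(Y_1,Y_2)$ the two sides are non-adjacent, so two vertices of the same clique cannot lie on opposite sides; hence each clique meets at most one of $Y_1,Y_2$, and balancing forces $|Y_1|=|Y_2|\le \min(a,n-a)=a$, while putting all of the small clique against $a$ vertices of the large clique realizes a hole of size $a$. Then I would $3$-color the large clique by the extremal coloring underlying Theorem~\ref{thm:Gy}(1) for $r=3$ (four equal groups, edges between groups colored by the parallel class of the affine plane of order $2$), which makes every monochromatic component of $K_{n-a}$ have order $\lceil (n-a)/2\rceil$. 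Since there are no edges between the two cliques, no monochromatic component can be larger than $\max\{\lceil (n-a)/2\rceil,\lceil a/2\rceil\}=\lceil (n-a)/2\rceil$, so $\mc_3(G)\le (n-a)/2$ up to rounding.

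For part (1), write $\alpha:=\alpha_2(G)$ and fix a $3$-coloring of $E(G)$; the hypothesis says that between any two disjoint sets of size $>\alpha$ there is an edge. I would build the argument on two robust connectivity facts and then reduce the $3$-coloring to a $2$-colored bipartite graph, imitating the classical proof that $\mc_3(K_n)\ge n/2$. \emph{Sub-lemma A (near-spanning component):} if $H$ is bipartite with parts $P,Q$, $\alpha_2(H)\le\alpha$, and $H$ has a component $K$ with $|K\cap P|>\alpha$ and $|K\cap Q|>\alpha$, then $|P\setminus K|\le\alpha$ and $|Q\setminus K|\le\alpha$; indeed every vertex of $Q\setminus K$ is non-adjacent to all of $K\cap P$, so $|Q\setminus K|>\alpha$ would produce a hole $(Q\setminus K,\,K\cap P)$ of size $>\alpha$ (and symmetrically for $P$). \emph{Sub-lemma B (robust bipartite $2$-coloring):} if $H$ is bipartite with parts $P,Q$, $\alpha_2(H)\le\alpha$, $|P|,|Q|>2\alpha$, and $E(H)$ is red/blue colored, then there is a monochromatic component of order $\ge (|P|+|Q|)/2-2\alpha$. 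To prove B, take a largest red component $R$ with $P_1=R\cap P$, $Q_1=R\cap Q$; every edge between $P_1$ and $Q\setminus Q_1$, and between $P\setminus P_1$ and $Q_1$, is blue (else $R$ extends), so Sub-lemma A applied to these two all-blue bipartite graphs gives blue components covering all but $2\alpha$ vertices of $P_1\cup(Q\setminus Q_1)$ and of $(P\setminus P_1)\cup Q_1$ respectively. These two vertex sets partition $P\cup Q$, so the larger blue component has order $\ge (|P|+|Q|)/2-2\alpha$.

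To finish part (1) I would reduce to Sub-lemma B. Consider the components of color $3$. If one has order $\ge n/2-2\alpha$, it is itself a monochromatic connected subgraph and we are done. Otherwise every color-$3$ component has fewer than $n/2$ vertices, so greedily distributing them into two groups $C,D$ yields sizes differing by less than the largest piece, whence $\min(|C|,|D|)>n/4>\alpha$. By construction no color-$3$ edge runs between $C$ and $D$, so $G[C,D]$ is bipartite with $\alpha_2(G[C,D])\le\alpha$ and its edges carry only colors $1$ and $2$; Sub-lemma B then yields a monochromatic component of order $\ge (|C|+|D|)/2-2\alpha=n/2-2\alpha$.

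The main obstacle is controlling the error term in Sub-lemma B in the \emph{degenerate} cases, for instance when the largest red component $R$ is one-sided ($|R\cap P|\le\alpha$ or $|R\cap Q|\le\alpha$), or when $R$ already covers almost all of one side, so that one of the two applications of Sub-lemma A is unavailable and a naive count can leak an extra additive $\alpha$. I expect that handling these cases (by re-selecting the splitting color, or by noting that a one-sided red structure forces a two-sided-large blue component) is exactly where the smallness hypothesis $\alpha\le n/3^{9}$ is spent: it keeps every set one conditions on comfortably larger than $\alpha$ through the constantly many nested uses of the connectivity lemma, so the main terms dominate and the loss collapses to the clean value $2\alpha$ attained in the balanced case. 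The conspicuously small threshold $n/3^{9}$ (rather than something like $n/4$) should reflect the accumulated slack these degeneracies require, consistent with its origin as the specialization of the general constant $c_{r,k}$ to $r=3$, $k=2$.
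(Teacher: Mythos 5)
Your part (ii) is correct and is essentially the paper's own example: the paper takes $K_{n-a}$ together with $a$ isolated vertices, you take two disjoint cliques, and both verifications of $\alpha_2(G)=a$ are the same pigeonhole argument. Two small repairs: you must also give the $a$-clique the affine-plane coloring (if it is colored badly it is itself a monochromatic component of order $a$, which exceeds $(n-a)/2$ when $a$ is close to $n/2$), and you should state the hypothesis you actually use in part (i) correctly. With the paper's definition of $\alpha_2$, the claim $\alpha_2(G[C,D])\le\alpha$ is false: two disjoint subsets of $C$ form a hole of $G[C,D]$ vacuously, since $G[C,D]$ has no edges inside $C$. What is true, and all your sub-lemmas ever use, is the cross form: disjoint $X_1\subseteq C$, $X_2\subseteq D$ with $|X_1|=|X_2|>\alpha$ span an edge of $G[C,D]$. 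That slip is cosmetic.

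The genuine gap is Sub-lemma B, which carries the entire weight of part (i). Sub-lemma A has a hypothesis --- the bipartite graph already possesses a component meeting \emph{both} sides in more than $\alpha$ vertices --- and in the two applications inside Sub-lemma B nothing supplies it. The failure modes are not only the ones you list ($R$ one-sided, or $R$ swallowing a side). Even when all of $P_1,Q_1,P\setminus P_1,Q\setminus Q_1$ have size greater than $\alpha$, the all-blue graph $H[P\setminus P_1,Q_1]$ can be a disjoint union of $\alpha+1$ stars centered at the vertices of $Q_1$ (take $|Q_1|=\alpha+1$): its cross-hole number is at most $\alpha$, because the only candidate for the $Q_1$-side of a hole is $Q_1$ itself and every vertex of $P\setminus P_1$ has its star-center there, yet every component has exactly one vertex on the $Q_1$ side, so Sub-lemma A has nothing to grab. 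In such a configuration (and symmetrically in $H[P_1,Q\setminus Q_1]$ when $|P_1|=\alpha+1$) every blue component that uses only edges inside your two pairs has order roughly $n/\alpha$, and the large monochromatic component must be located among the edges between $P\setminus P_1$ and $Q\setminus Q_1$ --- edges whose colors are unconstrained and which your scheme never examines; a naive recursion on that smaller bipartite graph leaks an additive $\Theta(\alpha)$ at each level and does not return the clean $n/2-2\alpha$. Worse, the statement you would need in order to make Sub-lemma A applicable --- that a bipartite graph with cross-hole number at most $\alpha$ and both sides large has a component missing at most $\alpha$ vertices of each side --- is \emph{false} when the sides have size $3\alpha$ (three disjoint copies of $K_{\alpha,\alpha}$), so even that preliminary fact needs a quantitative proof with an explicit threshold on the side lengths. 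These configurations are exactly what the paper's machinery is engineered to handle: working in the dual (Observation \ref{dual:r}), Lemma \ref{decentdegree} first extracts a vertex of degree at least $3^6\nu_2+1$, Lemma \ref{lem:fatedge} upgrades this to either the desired degree or a ``fat edge'' (in primal terms, $\alpha+1$ vertices lying in a common component of every one of the three colors), and Proposition \ref{fatedgetodegree3} converts that fat edge into a component of order $n/2-2\alpha_2$. Your closing paragraph defers precisely this work to the hypothesis $\alpha\le n/3^9$, but smallness of $\alpha$ by itself does not create the missing two-sided components. So what you have is a correct proof of part (ii) plus a correct reduction of part (i) to an unproved (though plausible) bipartite lemma; the unproved lemma is where the theorem's difficulty lives.
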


\begin{theorem}\label{thm:rrnur}
Let $r$ be an integer with $r\geq 3$.
\begin{enumerate}
\item There exists $c_r>0$ such that for all $r$-uniform hypergraphs $G$ on $n$ vertices, if $\alpha_r(G)<c_r n$, then $n-
\alpha_r(G)\geq \mc_r(G)\geq n-(r-1)\alpha_r(G)$.
\item For all $0 \le a \le n/(r+2)$, there exists a $r$-uniform hypergraph $G$ on $n$ vertices with $\alpha_r(G) = a$ such that  $\mc_r(G) \le n - 2\alpha_r(G)$.
\end{enumerate}
\end{theorem}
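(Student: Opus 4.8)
The upper bound $\mc_r(G)\le n-\alpha_r(G)$ in part~(1) is exactly the $k=r$ case of Observation~\ref{mc2_upper} together with $\mc_r(K_n^r)=n$ from Theorem~\ref{thm:Gy}, so the content of part~(1) is the lower bound, and the content of part~(2) is an explicit construction. I describe both.

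For the lower bound, fix an $r$-coloring $\chi$ of $G$, write $\alpha=\alpha_r(G)$, and suppose for contradiction that every monochromatic component has order at most $M:=n-(r-1)\alpha-1$. For each color $i$, let $\mathcal{Q}_i$ be the partition of $V(G)$ into its color-$i$ components (isolated vertices being singletons); by assumption every block of every $\mathcal{Q}_i$ has size at most $M$, equivalently every block has complement of size at least $(r-1)\alpha+1$. The key reduction is the following: if I can find pairwise disjoint sets $X_1,\dots,X_r\subseteq V(G)$, each of size $\alpha+1$, such that every block of every $\mathcal{Q}_i$ is disjoint from at least one $X_j$, then $(X_1,\dots,X_r)$ is an $r$-partite hole of size $\alpha+1>\alpha$, a contradiction. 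Indeed, any edge $e$ lies inside a single block $Q$ of $\mathcal{Q}_{\chi(e)}$ (its own color-component), and if $Q$ misses some $X_j$ then so does $e$, so no edge is a transversal of $X_1,\dots,X_r$. Thus the whole problem reduces to a clean statement about $r$ partitions of an $n$-set all of whose blocks are large-complemented.

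To build such $X_1,\dots,X_r$ I would use a \emph{cyclic chain of distinguished blocks}: choose one block $Q_i^*\in\mathcal{Q}_i$ for each $i$ and place $X_i\subseteq Q_{i-1}^*\setminus Q_i^*$ (indices mod $r$). This placement handles every partition simultaneously: for partition $\mathcal{Q}_i$, the block $Q_i^*$ misses $X_i$ (since $X_i\cap Q_i^*=\emptyset$), while every other block of $\mathcal{Q}_i$ is disjoint from $Q_i^*\supseteq X_{i+1}$ and hence misses $X_{i+1}$. The main obstacle is choosing the distinguished blocks and extracting the $X_i$: because a block may have size nearly $n$ (its complement guaranteed only to be $\ge(r-1)\alpha+1$), the ``annular'' regions $Q_{i-1}^*\setminus Q_i^*$ can be small or heavily overlapping, and in degenerate cases (e.g.\ many partitions sharing a common near-spanning block) a naive choice of largest blocks collapses the chain. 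I expect this to be the crux of the argument, and I would overcome it by a careful greedy/weighted selection of the $Q_i^*$ combined with a Hall-type argument to pull out disjoint $(\alpha+1)$-subsets, splitting into the case where some partition is ``fine'' (all blocks small, hence automatically handled since a block of size less than $r$ cannot meet all $r$ of the $X_j$) and the case of a dominant block; the bound $\alpha<c_r n$ is precisely what provides the slack needed to fit $r$ sets of size $\alpha+1$ into the available complements, and tracking it yields the admissible $c_r$.

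For part~(2) I would give a direct cyclic construction. Assuming $a\le n/r$ (which is implied by $a\le n/(r+2)$), partition $V$ into blocks $V_1,\dots,V_r$ of size $a$ together with a leftover set $W$, and let color $i$ ``avoid'' the consecutive pair $V_i\cup V_{i+1}$ (indices mod $r$); concretely take $G=\{e:\ e\cap(V_i\cup V_{i+1})=\emptyset\text{ for some }i\}$ and color each edge by such an $i$. Then every color-$i$ component avoids $V_i\cup V_{i+1}$, so it has order at most $n-2a$, giving $\mc_r(G)\le n-2a$. The family $(V_1,\dots,V_r)$ is a hole of size $a$, since any transversal meets every $V_i$ and hence every pair $V_i\cup V_{i+1}$, so $\alpha_r(G)\ge a$. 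Finally there is no hole of size $a+1$: if $Y_1,\dots,Y_r$ were disjoint of size $a+1$ with no transversal edge, then for each $i$ some $Y_j\subseteq V_i\cup V_{i+1}$; consecutive pairs meet in a single block of size $a$, so a set of size $a+1$ lies in at most one pair, forcing $r$ distinct $Y_j$ all contained in $V_1\cup\cdots\cup V_r$, whose size $ra$ is strictly less than $\sum_j|Y_j|=r(a+1)$, a contradiction. Hence $\alpha_r(G)=a$ exactly, completing part~(2).
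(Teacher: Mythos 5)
Your part (ii) is correct, complete, and genuinely different from the paper's construction. The paper (Example \ref{mcr_upper}) partitions $V$ into $r+2$ parts $V_0,V_1,\dots,V_{r+1}$ and makes color $j$ consist of a clique on $V_j\cup V_{r+1}$ and a clique on $V_0\cup\bigcup_{i\neq j}V_i$, leaving the verification of $\alpha_r(G)=a$ as ``straightforward, albeit tedious.'' Your cyclic construction (edges are the $r$-sets avoiding some consecutive pair $V_i\cup V_{i+1}$, colored by such an $i$) is cleaner, and you actually carry out the verification: the observation that a missing transversal forces some $Y_j\subseteq V_i\cup V_{i+1}$ for every $i$, that a set of size $a+1$ fits in at most one such pair (any two pairs meet in a single block of size $a$, including when $r=3$), and the count $r(a+1)>ra$ together give $\alpha_r(G)=a$ exactly. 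This argument checks out and is, if anything, an improvement on the paper's treatment of part (ii).

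Part (i) is where the genuine gap lies. The upper bound via Observation \ref{mc2_upper} is fine, and your reduction is sound; in fact it loses nothing, since ``every color-$i$ component misses some $X_j$'' is exactly the non-cross-intersecting condition $\nu_r(H)\geq \alpha+1$ in the paper's dual hypergraph of components (Observation \ref{dual:r}), which is precisely what the paper's proof produces in contrapositive form. But the construction of the sets $X_1,\dots,X_r$ --- which is the entire quantitative content of the theorem --- is not carried out: you explicitly defer it (``I expect this to be the crux\dots I would overcome it by a careful greedy/weighted selection\dots combined with a Hall-type argument''). Moreover, the cyclic-chain mechanism cannot serve as the sole engine. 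Consider the configuration in which all $r$ color-partitions equal the same two-block partition $\{A,B\}$ with $|A|,|B|\geq(r-1)\alpha+1$ (e.g., two disjoint cliques, each colored so that every color class spans its clique): for $r$ odd, any choice of distinguished blocks $Q_i^*\in\{A,B\}$ forces $Q_{i-1}^*=Q_i^*$ for some $i$, so some annulus $Q_{i-1}^*\setminus Q_i^*$ is empty. A hole of size $\alpha+1$ does exist there ($X_1\subseteq B$, $X_2,\dots,X_r\subseteq A$), but not one of your cyclic form; so the case analysis you gesture at is not a technical afterthought --- it is the theorem, and it is absent.

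For comparison, after dualizing, the paper proves Theorem \ref{dual:rrnur} in three steps, each of which your sketch would need to replicate in some form: a pigeonhole/halving lemma (Lemma \ref{ballbin_precise}/\ref{ballbin}) iterated across the $r$ parts yields a vertex of degree at least $3^{\binom{r+1}{2}}\nu_r(H)+1$ (Lemma \ref{decentdegree}); Lemma \ref{lem:fatedge} then shows such a vertex forces either $\Delta(H)\geq n-2\nu_r(H)$ or an edge of multiplicity at least $\nu_r(H)+1$; and Proposition \ref{fatedgetodegree} converts a fat edge into a vertex of degree at least $n-(r-1)\nu_r(H)$, using the bound on $\nu_r$ to control the overlaps among the edge-sets $F_i$ avoiding each vertex of the fat edge. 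Until your greedy/Hall step is made precise at this level of detail, part (i) remains unproven.
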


\begin{theorem}\label{thm:rrnur-1}
Let $r$ be an integer with $r\geq 4$.   
\begin{enumerate}
\item There exists $c_r>0$ such that for all $(r-1)$-uniform hypergraphs $G$ on $n$ vertices, if $\alpha_{r-1}(G)<c_r n$, then $\mc_{r}(G)\geq \frac{r-1}{r}n-\binom{r}{2} \alpha_{r-1}(G)$.
\item For all $0\leq a\leq n/(r-1)$, there exists an $(r-1)$-uniform hypergraph on $n$ vertices with $\alpha_{r-1}(G)=a$ such that $\mc_{r}(G)\leq \frac{r-1}{r}(n-a)$.
\end{enumerate}
\end{theorem}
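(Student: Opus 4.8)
The plan is to work in the dual setting described in the abstract. Fix an arbitrary $r$-coloring of $G$, and for each vertex $v$ and color $c\in[r]$ let $C_c(v)$ be the color-$c$ component containing $v$. The monochromatic components, grouped by color, form the $r$ parts of an $r$-partite $r$-uniform hypergraph $\mathcal H$ whose edges are the vertices of $G$, and the degree of a component in $\mathcal H$ equals its order, so $\mc_r(G)$ is the minimum over colorings of the maximum degree of $\mathcal H$. Crucially, the relaxed hole condition $\alpha_{r-1}(G)\le\alpha$ translates into: for any $r-1$ pairwise disjoint vertex sets $V_1,\dots,V_{r-1}$ with $\abs{V_i}>\alpha$, some edge of $G$ meets all of them, hence some monochromatic component meets all of $V_1,\dots,V_{r-1}$. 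Thus proving (1) amounts to showing that if every monochromatic component has order $<\frac{r-1}{r}n-\binom r2\alpha$, then one can exhibit $r-1$ pairwise disjoint sets, each of size $>\alpha$, that are met by no single component, contradicting the relaxed condition.

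For part (1), I would run a robust (stability) version of Gy\'arf\'as's proof of Theorem~\ref{thm:Gy}(3). Assume for contradiction that every monochromatic component has fewer than $m:=\frac{r-1}{r}n-\binom r2\alpha$ vertices. For each color $c$ pick a largest color-$c$ component $C_c$ and set $U_c:=V(G)\setminus C_c$, so $\abs{U_c}>\frac nr+\binom r2\alpha$; the $U_c$ need not be disjoint, and their total size exceeds $n+r\binom r2\alpha$, forcing substantial pairwise overlap. From $U_1,\dots,U_r$ I would extract $r-1$ pairwise disjoint sets $V_1,\dots,V_{r-1}$, each still of size $>\alpha$, such that every monochromatic component avoids at least one $V_i$. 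The guiding heuristic is the exact extremal configuration (partition into $r$ parts, color by smallest missed part), where taking $V_i=U_i$ fails precisely because the $r$-th color's component meets all the others; the extra room $\binom r2\alpha$ is exactly what is needed to repair this by trimming overlaps, and the coefficient $\binom r2$ should emerge from correcting the pairwise intersections among the $r$ candidate sets (one loss of $\alpha$ per pair). The threshold $c_r$ is then whatever keeps the surplus $\binom r2\alpha$ below $n/r$, i.e. $c_r$ of order $1/r^3$.

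I expect the disjointification step — simultaneously keeping all $r-1$ sets above the threshold $\alpha$ and certifying that \emph{every} component (including small ones, and the surplus color that gets dropped) misses one of them — to be the main obstacle, since a single large component could a priori be spread across several of the $U_c$. Handling vertices that fail to see all $r$ colors (so that $\mathcal H$ is not literally $r$-uniform) is a minor technical point that I would dispose of by a padding/WLOG reduction.

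For part (2), I would give an explicit construction as a union of cliques. Fix disjoint sets $X_1,\dots,X_{r-1}$ with $\abs{X_i}=a$, index the colors $[r]$ as the vertices of a Hamiltonian path $P$, and identify each $X_i$ with an edge of $P$. Set $A_c:=W_c\cup\bigcup\{X_i:\text{edge }i\text{ is incident to }c\text{ in }P\}$, where $W_1,\dots,W_r$ partition the remaining $n-(r-1)a$ vertices with sizes chosen so that $\abs{A_c}=\frac{n+(r-1)a}{r}$ for every $c$ (possible when $a$ is below the stated threshold, since $P$ has maximum degree $2$). Let $R_c:=V(G)\setminus A_c$ and take $G:=\bigcup_{c\in[r]}\binom{R_c}{r-1}$, i.e. keep exactly the $(r-1)$-sets contained in some $R_c$. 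Coloring each edge by a $c$ with $e\subseteq R_c$ confines the color-$c$ class to $R_c$, so every monochromatic component has at most $\abs{R_c}=\frac{r-1}{r}(n-a)$ vertices, giving the required upper bound on $\mc_r(G)$. For the hole number, a disjoint family $Z_1,\dots,Z_{r-1}$ is a partite hole iff for every color $c$ some $Z_i\subseteq A_c$; since distinct $A_c$ meet only in the $X_i$'s, each $Z_i$ can cover at most two colors, and does so only when $Z_i\subseteq X_j$ for some $j$, so covering all $r$ colors with $r-1$ sets forces at least one $Z_i$ to lie inside some $X_j$, capping the minimum size at $a$. As $X_1,\dots,X_{r-1}$ itself realizes a hole of size $a$, this gives $\alpha_{r-1}(G)=a$ exactly. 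The only delicate point is pushing the construction to the full range $a\le n/(r-1)$, where the equal-size requirement on the $A_c$ can fail and a modified placement is needed.
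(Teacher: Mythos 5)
Your part (1) is a plan rather than a proof, and the step you defer is the entire content of the theorem. The duality you set up is exactly the paper's Observation \ref{dual:r}, but beyond that point the paper does \emph{not} run a stability version of Gy\'arf\'as' argument on the complements $U_c$ of the largest components. Instead it proves the dual statement (Theorem \ref{dual:rrnur-1}) by a degree-bootstrapping chain: Lemma \ref{decentdegree} (iterated pigeonhole via Lemma \ref{ballbin}) first produces a vertex of degree at least $3^{\binom{r+1}{2}}\nu+1$ where $\nu:=\nu_{r-1}(H)$; Lemma \ref{lem:fatedge} then shows that either the desired degree bound already holds or $H$ has an edge of multiplicity at least $\nu+1$ (a ``fat edge,'' i.e.\ in the primal language, many vertices of $G$ lying in a common component in every color); and Proposition \ref{fatedgetodegreer} converts the fat edge into $\Delta(H)\geq \frac{r-1}{r}n-\binom{r}{2}\nu$ by analyzing the intersection pattern of the sets $F_i$ of edges avoiding $u_i$ (Claims \ref{clm:3way}, \ref{clm:2way2}, \ref{clm:1way2way}, followed by one more application of Lemma \ref{ballbin}). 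Your ``disjointification'' step---which you yourself flag as the main obstacle---is precisely where all of this work lives, and the heuristic ``one loss of $\alpha$ per pair'' does not address the actual difficulty: to certify a hole you must ensure that \emph{every} monochromatic component, not just the largest one of each color, misses one of your $r-1$ sets. A color can have a second component of order close to $n/2$ (still below the threshold $\frac{r-1}{r}n-\binom{r}{2}\alpha$ when $r\geq 4$) lying entirely inside $U_c$, and nothing in your trimming scheme prevents it from meeting all $r-1$ candidate sets; it is exactly to control such configurations that the paper needs the fat-edge mechanism. So part (1) has a genuine gap.

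Part (2) of your proposal is correct as far as it goes: the counting of which colors a set $Z_i$ can ``cover,'' and the observation that covering two colors forces $Z_i\subseteq X_j$, are sound. But equalizing the parts $A_c$ requires $\frac{n+(r-1)a}{r}\geq 2a$, i.e.\ $a\leq n/(r+1)$, so the stated range $a\leq n/(r-1)$ is not covered---a gap you acknowledge but do not close. The paper's construction is far simpler and covers the whole range: take $K_{n-a}^{r-1}$ together with $a$ isolated vertices. Then $\alpha_{r-1}(G)=a$ (the isolated set plus $r-2$ disjoint $a$-sets form a hole, while any $r-1$ disjoint sets of size $a+1$ each contain a clique vertex), and $\mc_r(G)=\mc_r(K_{n-a}^{r-1})=\frac{r-1}{r}(n-a)$ by Theorem \ref{thm:Gy}(iii).
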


The first completely open case of Problem \ref{prob:gen} is when $k=2$ and $r\geq 4$.  In this case, we give the following modest lower bound (see Conjecture \ref{con:k=2} and the preceding discussion for more details about this open case).

\begin{proposition}\label{propweak}
For all $r\geq 4$, there exists $c_r >0$ such that for all graphs $G$ on $n$ vertices, if $\alpha_2(G)<c_r n$, then $\mc_r(G)\geq \frac{n-\alpha_2(G)}{r}$.  
\end{proposition}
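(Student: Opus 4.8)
The plan is to argue by contradiction in the spirit of the classical Gy\'arf\'as bound $\mc_r(K_n)\geq \frac{n}{r-1}$, but with the completeness of $K_n$ replaced throughout by the expansion hypothesis $\alpha_2(G)<\alpha$ (where I write $\alpha:=\alpha_2(G)$). Set $t:=\frac{n-\alpha}{r}$ and suppose, for contradiction, that some $r$-coloring of $E(G)$ has \emph{every} monochromatic component of order strictly less than $t$. I will produce from this a monochromatic component of order at least $\frac{n-O(\alpha)}{r-1}$, which exceeds $t$ once $c_r$ is small, contradicting the assumption and proving $\mc_r(G)\geq t$.

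\textbf{Reduction to a bipartite expansive problem.} The color-$1$ components partition $V(G)$ (isolated-in-color-$1$ vertices counted as singletons), and by assumption each part has fewer than $t\leq n/r$ vertices. Since every part is smaller than $n/2$, a greedy balanced grouping yields a partition $V(G)=A\sqcup A'$ into unions of color-$1$ components with $|A|,|A'|\geq \frac{n-t}{2}\geq \alpha$ for $c_r$ small. No color-$1$ edge runs between $A$ and $A'$ (each color-$1$ component lies wholly on one side), so every edge between $A$ and $A'$ uses one of the colors $2,\dots,r$. Moreover the cross graph inherits expansion directly from $\alpha_2(G)<\alpha$: any $X\subseteq A$ and $Y\subseteq A'$ with $|X|,|Y|\geq\alpha$ are disjoint subsets of $V(G)$, hence span a $G$-edge, which is necessarily a cross edge. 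Thus I have reduced to a bipartite graph between $A$ and $A'$, with $r-1$ colors on its cross edges, satisfying the cross-expansion property that any two $\alpha$-sets (one per side) are joined by a cross edge.

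\textbf{A bipartite expansive Gy\'arf\'as lemma.} The key step is to prove, by induction on the number of colors $j$, that a bipartite graph with parts $P,Q$ and $j$-colored cross edges enjoying cross-expansion with parameter $\alpha$ contains a monochromatic cross-component of order at least $\frac{|P|+|Q|-c'_j\alpha}{j}$. The base case $j=1$ is where expansion does the real work: if $C_1$ is the cross-component with at least $\alpha$ vertices on each side, then cross-expansion forbids any $\alpha$-set of $P\setminus C_1$ or $Q\setminus C_1$ from avoiding edges to $C_1$, forcing $|Q\setminus C_1|,|P\setminus C_1|<\alpha$ and hence $|C_1|>|P|+|Q|-2\alpha$. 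The inductive step mirrors the complete-bipartite induction underlying Theorem~\ref{thm:Gy}: if no color-$1$ cross-component is already large, split along the (small) color-$1$ cross-components and recurse on the remaining $j-1$ colors, the cross-expansion again guaranteeing that the split loses only $O(\alpha)$ vertices at each stage. Applying this with $j=r-1$ to the pair $(A,A')$ produces a monochromatic cross-component of order at least $\frac{|A|+|A'|-O(\alpha)}{r-1}=\frac{n-O(\alpha)}{r-1}$, which sits inside a single monochromatic component of $G$ of the same order.

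\textbf{Conclusion and main obstacle.} Since $\frac{n-O(\alpha)}{r-1}\geq \frac{n-\alpha}{r}=t$ whenever $\alpha<c_r n$ with $c_r$ chosen smaller than a suitable $1/(Cr)$, this contradicts the hypothesis that all monochromatic components are smaller than $t$, completing the proof; the gap between the factors $\frac{1}{r-1}$ and $\frac{1}{r}$ is exactly what absorbs the additive $O(\alpha)$ error, which is why the modest target $\frac{n-\alpha_2(G)}{r}$ (rather than $\frac{n-\alpha_2(G)}{r-1}$) is reachable by such a crude split. I expect the main obstacle to be the bipartite expansive lemma: carrying out the inductive split so that the error term remains \emph{linear} in $\alpha$ (with a constant depending only on $j$) rather than compounding multiplicatively, and verifying carefully that the cross-expansion property is genuinely inherited by each sub-instance produced in the recursion.
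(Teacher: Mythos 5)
Your reduction step is fine and is a genuinely different, primal route from the paper's: grouping the (small) color-$1$ components greedily into two sides $A,A'$ of size at least $\frac{n-t}{2}$ each, noting that all $A$--$A'$ edges avoid color $1$ and that the pair $(A,A')$ inherits the cross-expansion property from $\alpha_2(G)$, is correct (modulo replacing ``$|X|,|Y|\geq\alpha$'' by ``$>\alpha$''). The genuine gap is the ``bipartite expansive Gy\'arf\'as lemma,'' which you invoke for $j=r-1\geq 3$ colors but only sketch. Its base case $j=1$ is fixable (the existence of a component with more than $\alpha$ vertices on \emph{both} sides needs an argument --- group components greedily and use cross-expansion --- before your ``forbidden $\alpha$-set'' step applies). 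But the inductive step as described does not work, and not just for expansive graphs: it already fails for the \emph{complete} bipartite graph. If you split along a single color-$1$ component $D_1$ (with sides $P_1,Q_1$) and recurse with $j-1$ colors on the two bipartite pieces $(P_1,\,Q\setminus Q_1)$ and $(P\setminus P_1,\,Q_1)$, the three candidate components have sizes $|P_1|+|Q_1|$, $\frac{|P_1|+|Q|-|Q_1|}{j-1}$, and $\frac{|P|-|P_1|+|Q_1|}{j-1}$; taking $|P|=|Q|=n$ and $|P_1|=|Q_1|=\epsilon n$ with $\epsilon$ small, all three are strictly below $\frac{|P|+|Q|}{j}=\frac{2n}{j}$ whenever $j\geq 3$, since $\frac{n}{j-1}<\frac{2n}{j}$ exactly when $j>2$. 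So no choice of weights rescues this split, and iterating it loses a factor of roughly $2$ per color, which is far too lossy for $r\geq 5$. Splitting along \emph{all} color-$1$ components simultaneously fares no better, because the resulting two groups are not joined by a bipartite graph with cross-expansion: two large sets lying on the same side ($P$ or $Q$) of the original bipartition need not span any edge, so the hypothesis of the lemma is not inherited and the recursion cannot be set up. There is no simple ``complete-bipartite induction underlying Theorem \ref{thm:Gy}'' of the kind you appeal to; the known proofs of the $\frac{m+n}{r}$ bound for $K_{m,n}$ go through duality/fractional matchings, not component-splitting.

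There is also a structural warning sign: if your bipartite lemma were true with error term $O_j(\alpha)$, then running your own reduction with the larger target $t'=\frac{n-O(\alpha)}{r-1}$ (all color-$1$ components are still smaller than $n/2$, so the grouping goes through) would yield $\mc_r(G)\geq \frac{n}{r-1}-O_r(\alpha_2(G))$, which is essentially the $k=2$, $r\geq 4$ case of Problem \ref{prob:gen} that the paper explicitly leaves open (Conjecture \ref{con:k=2}). So the ``main obstacle'' you flag is not a technical verification; it is the entire difficulty of the problem. The paper avoids it completely by working in the dual: after Lemma \ref{decentdegree} produces a vertex of degree at least $3^{\binom{r+1}{2}}\nu_2+1$, Lemma \ref{lem:fatedge} either gives $\Delta(H)\geq\frac{n-2\nu_2}{r-1}$ outright or produces an edge $e$ of multiplicity at least $\nu_2+1$; in the latter case at most $\nu_2$ edges can be disjoint from $e$ (Lemma \ref{fatedge3}(i)), so averaging the at least $n-\nu_2$ edges meeting $e$ over its $r$ vertices gives $\Delta(H)\geq\frac{n-\nu_2}{r}$. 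That crude averaging over $r$ vertices is precisely where the modest denominator $r$ in the proposition comes from, and it is what lets the paper sidestep any Gy\'arf\'as-type statement for $r-1$ colors.
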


It would also be interesting to improve the bounds in Theorem \ref{thm:rrnur}. In particular, we have (in the context of Problem \ref{prob:gen}) that $2\le d_{r,r}\le (r-1)$.

\subsection{Corollaries}

As mentioned earlier, there have been a number of results showing that $\mc_r(H^k(n,p))=(1-o(1))\mc_r(K_n^k)$ where $H^k(n,p)$ is the binomial random $k$-uniform hypergraph.  However, those results have all used the sparse regularity lemma and thus there are no reasonable estimates on the error terms.  Since the value of $\alpha_k(H^k(n,p))$ is easy to estimate, for all values of $k$ and $r$ for which Theorems \ref{thm:r2nu2} - \ref{thm:rrnur-1} hold we automatically recover $\mc_r(H^k(n,p))=(1-o(1))\mc_r(K_n^k)$ with very good estimates on the error terms.  

\begin{corollary}
For all $r\ge 2$ and $p = \frac{d}{n^{r-1}}$ with $d\to \infty$, we have that with high probability,
\[\mc_r(H^r(n,p)) = n-\Theta\of{\of{\frac{\log d}{d}}^{\frac{1}{r-1}}n}.\]
Additionally,
\[\mc_3(H^2(n,p))\ge \frac{n}{2} - O\of{\frac{\log d}{d} n}\] 
and for all $r\ge 3$,
\[\mc_{r+1}(H^{r}(n,p)) \ge \frac{r}{r+1}n - O\of{\of{\frac{\log d}{d}}^{\frac{1}{r-1}}n}\]
\end{corollary}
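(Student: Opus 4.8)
The plan is to reduce the entire corollary to a single estimate on the partite hole number and then quote Theorems~\ref{thm:r2nu2}--\ref{thm:rrnur-1}. Writing $k$ for the uniformity appearing in each displayed bound and taking $p=d/n^{k-1}$ (the natural normalization, under which the expected number of transversal $k$-sets on $k$ disjoint blocks of linear size is $\Theta(n)$), all three statements follow once we show that, w.h.p.,
\[ \alpha_k(\hnpk) = \Theta\of{\of{\tfrac{\log d}{d}}^{1/(k-1)} n}. \]
Indeed, the first display with $r=2$ uses Theorem~\ref{thm:r2nu2}(1),(2) (namely $\mc_2\ge n-2\alpha_2$ and $\mc_2\le n-\alpha_2$); the first display with $r\ge 3$ uses the two-sided bound $n-\alpha_r\ge\mc_r\ge n-(r-1)\alpha_r$ of Theorem~\ref{thm:rrnur}(1); the second display uses Theorem~\ref{thm:r3nu2}(1) in uniformity $k=2$; and the third uses Theorem~\ref{thm:rrnur-1}(1) applied with its ``$r$'' equal to $r+1$ and uniformity $k=r$, giving $\mc_{r+1}\ge\frac{r}{r+1}n-\binom{r+1}{2}\alpha_r$. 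Since $d\to\infty$ forces $(\log d/d)^{1/(k-1)}\to 0$, the hypothesis $\alpha_k<c\,n$ of each theorem holds for $n$ large, and in every case the error collapses to $O\of{(\log d/d)^{1/(k-1)} n}$ (respectively $O(\frac{\log d}{d}n)$ when $k=2$). Note that only the first display needs the matching lower bound on $\alpha_k$; the other two use the upper bound alone.

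For the upper bound I would run a first-moment/union bound. On disjoint blocks $X_1,\dots,X_k$ of size $a$, an edge meeting every block must (since $|e|=k$) take exactly one vertex from each $X_i$, so there are exactly $a^k$ relevant $k$-sets and the probability the configuration is a hole is $(1-p)^{a^k}\le e^{-pa^k}$. Bounding the number of configurations by $\binom{n}{a}^k\le (en/a)^{ka}$ and writing $a=\beta n$, the expected number of holes of size $a$ is at most $\exp\of{\beta n\of{k\ln(e/\beta)-d\beta^{k-1}}}$. Setting $\beta=(C\log d/d)^{1/(k-1)}$ gives $d\beta^{k-1}=C\log d$ and $\ln(e/\beta)\sim\frac{1}{k-1}\log d$, so the bracket is $\big(\tfrac{k}{k-1}-C+o(1)\big)\log d$; thus for any fixed $C>\frac{k}{k-1}$ the exponent tends to $-\infty$ and w.h.p. no hole of size $a$ exists. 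Because holes are downward closed in size (deleting a vertex from each block cannot create a transversal edge), this gives $\alpha_k(\hnpk)< (C\log d/d)^{1/(k-1)}n$ w.h.p.

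For the lower bound, needed only for the $\Theta$ in the first display, I would run a second-moment argument at a size just below threshold: fix $C'<\frac{k}{k-1}$, set $a=(C'\log d/d)^{1/(k-1)}n$, and let $Z$ count $k$-partite holes of size $a$. The same computation shows $\mathbb{E}[Z]\to\infty$, so it remains to prove $\mathbb{E}[Z^2]=(1+o(1))\mathbb{E}[Z]^2$, after which Chebyshev yields $Z>0$, hence $\alpha_k\ge a$, w.h.p.

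The main obstacle is exactly this second-moment estimate. Two configurations are independent unless they share transversal $k$-sets, and a shared set inflates the joint hole-probability by a factor $(1-p)^{-1}\approx e^{p}$; grouping pairs by the number $s$ of shared transversal $k$-sets, one must show that the total contribution of correlated pairs (those whose blocks overlap in a coordinated way) is $o(\mathbb{E}[Z]^2)$. The delicate regime is pairs whose blocks overlap heavily, where $s$ can be comparable to $a^k$: controlling these requires a careful count of how many configuration-pairs realize each overlap pattern and a verification that the entropy loss of order $(\text{overlap})\cdot\log(n/a)$ dominates the correlation gain $ps$. This is routine in spirit but is the one place where the estimates must be carried out carefully, and it is where the precise threshold constant $\frac{k}{k-1}$—and hence the sharpness of the $\log d$ factor in the error term—is actually tested.
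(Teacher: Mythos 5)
Your reduction of all three displays to the single estimate $\alpha_k(H^k(n,p)) = \Theta\big((\log d/d)^{1/(k-1)}n\big)$ is exactly the paper's strategy, your applications of Theorems \ref{thm:r2nu2}--\ref{thm:rrnur-1} (including applying Theorem \ref{thm:rrnur-1} with its parameter equal to $r+1$) are correct, and your first-moment upper bound on $\alpha_k$ matches the paper's ``standard first moment calculation.'' The gap is the lower bound on $\alpha_r$, which is needed only for the two-sided $\Theta$ in the first display but is genuinely needed there. You propose a second-moment argument on the number of $r$-partite holes at size $(C'\log d/d)^{1/(r-1)}n$ for $C'$ just below the first-moment threshold, and you explicitly leave the variance estimate unproven. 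This is not a routine omission: that is precisely the regime where the naive second moment is known to be delicate (already for independent sets in $G(n,d/n)$, pairs of near-extremal sets with heavy overlap can dominate the variance, which is why sharp lower bounds on independence numbers of sparse random graphs require additional tools such as martingale concentration). So as written, the one load-bearing step of your argument for the first display is missing, and it is not clear it can be completed at the constant you claim.

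The paper avoids this entirely with a soft argument: since the $\Theta$ is allowed to suppress constants depending on $r$, it suffices to exhibit a hole within a constant factor of the truth, and for that one can take an independent set of size $\Theta\big((\log d/d)^{1/(r-1)}n\big)$ --- known to exist w.h.p.\ by Krivelevich--Sudakov \cite{KS} --- and partition it into $r$ equal parts; by Observation \ref{obs:alpha}, $\alpha_r(G) \ge \lfloor \alpha(G)/r\rfloor$, so this yields an $r$-partite hole of the required size. If you replace your second-moment step with this observation (or any off-the-shelf lower bound on the independence number of random hypergraphs), your proof becomes complete; nailing the precise threshold constant $k/(k-1)$ is unnecessary for the statement being proved.
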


\begin{proof}
The statements above follow from Theorems \ref{thm:r2nu2} - \ref{thm:rrnur-1} and the fact that for $p$ as in the statement, w.h.p., $\alpha_r(H^r(n,p)) = \Theta\of{\of{\frac{\log d}{d}}^{\frac{1}{r-1}}n}$ where the $\Theta$-notation is suppressing a multiplicative constant which may depend on $r$. The upper bound can be shown using a standard first moment calculation. The lower bound follows by taking an independent set of size $\Theta\of{\of{\frac{\log d}{d}}^{\frac{1}{r-1}}n}$ and partitioning it into $r$ equal sized sets (see Observation \ref{obs:alpha}). Independent sets of this size are known to exist (see e.g. \cite{KS}).
\end{proof}

See Observation \ref{obs:rg-not-all} and Problem \ref{prob:rhg-not-full} for a discussion about the upper bounds in the second and third statements.

Let $\mathcal{S}_n$ be the family of all Steiner triple systems on $n$ vertices.  DeBiasio and Tait \cite{DT} proved that for all 3-uniform hypergraphs $G$ on $n$ vertices in which every pair of vertices is contained in at least one edge, $\mc_3(G)\geq n-2\alpha_3(G)$ (note that Theorem \ref{thm:rrnur}(i) is stronger in the sense that there is no requirement that every pair of vertices is contained in at least one edge).  They used this to prove that for all $S\in \mathcal{S}_n$, $\mc_3(S)\geq 2n/3+1$ and there exists $\delta>0$ such that for almost all $S\in \mathcal{S}_n$, $\mc_3(S)\geq n-n^{1-\delta}$.  This latter result was proved by showing that for almost all $S\in \mathcal{S}_n$, $\alpha_3(S)\leq n^{1-\delta}$.  
Gy\'arf\'as \cite{Gy2} proved in particular that for all $S\in \mathcal{S}_n$, $\mc_4(S)\geq \frac{n}{3}$ (and this is best possible for infinitely many $n$).  Using the fact (from \cite{DT}) that for almost all $S\in \mathcal{S}_n$, $\alpha_3(S)\leq n^{1-\delta}$, we obtain the following corollary of Theorem \ref{thm:rrnur-1}(i) (with $r=4$).

\begin{corollary}~
There exists $\delta>0$ such that for almost all $S\in \mathcal{S}_n$, $\mc_4(S)\geq \frac{3n}{4}-O(n^{1-\delta})$.
\end{corollary}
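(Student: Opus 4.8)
The plan is to apply Theorem \ref{thm:rrnur-1}(i) with $r=4$ and feed into it the known bound on the $3$-partite hole number of a typical Steiner triple system. Specializing Theorem \ref{thm:rrnur-1}(i) to $r=4$ produces a constant $c_4>0$ such that every $3$-uniform hypergraph $G$ on $n$ vertices with $\alpha_3(G)<c_4 n$ satisfies $\mc_4(G)\geq \frac{3n}{4}-\binom{4}{2}\alpha_3(G)=\frac{3n}{4}-6\alpha_3(G)$. Since a Steiner triple system on $n$ vertices is in particular a $3$-uniform hypergraph on $n$ vertices, this bound applies verbatim to any $S\in\mathcal{S}_n$ once the hole hypothesis is verified.

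The remaining ingredient is the fact, established by DeBiasio and Tait \cite{DT} and already invoked in the text preceding the statement, that there is some $\delta>0$ for which almost all $S\in\mathcal{S}_n$ satisfy $\alpha_3(S)\leq n^{1-\delta}$. For $n$ large enough we have $n^{1-\delta}<c_4 n$, so for almost all $S\in\mathcal{S}_n$ the hypothesis $\alpha_3(S)<c_4 n$ of Theorem \ref{thm:rrnur-1}(i) is met. Combining the two estimates then gives, for almost all $S\in\mathcal{S}_n$,
$$\mc_4(S)\geq \frac{3n}{4}-6\alpha_3(S)\geq \frac{3n}{4}-6n^{1-\delta}=\frac{3n}{4}-O(n^{1-\delta}),$$
which is exactly the claimed bound, with the same $\delta$ as in \cite{DT}.

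I do not anticipate any serious obstacle here: the corollary is a direct substitution of the typical hole size into the general quantitative theorem. The only point deserving attention is reconciling the two uses of ``almost all'' and confirming the error regime is consistent---namely that the event $\{\alpha_3(S)\leq n^{1-\delta}\}$ from \cite{DT} has probability tending to $1$, and that on this event $n^{1-\delta}$ simultaneously lies below the threshold $c_4 n$ (so that Theorem \ref{thm:rrnur-1}(i) applies) and is of order $O(n^{1-\delta})$ (so that the advertised error term is correct). Both conditions hold for all sufficiently large $n$, so once Theorem \ref{thm:rrnur-1}(i) and the bound from \cite{DT} are in hand the argument reduces to a one-line deduction.
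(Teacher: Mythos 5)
Your proposal is correct and is exactly the paper's argument: the authors obtain the corollary by applying Theorem \ref{thm:rrnur-1}(i) with $r=4$ together with the bound $\alpha_3(S)\leq n^{1-\delta}$ for almost all $S\in\mathcal{S}_n$ from \cite{DT}. Your additional checks (that $n^{1-\delta}<c_4 n$ for large $n$ and that the two quantifications over ``almost all'' are compatible) are the right routine verifications and raise no issues.
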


\subsection{Expansion}

Let $G$ be a $k$-uniform hypergraph $G$ on $n$ vertices and let $S_1, \dots, S_{k-1}\subseteq V(G)$.  Define $N(S_1,\dots, S_{k-1})=\{v: \{v_1, \dots, v_{k-1}, v\}\in E(G), v_i\in S_i \text{ for all } i\in [k-1]\}$ and $N^+(S_1,\dots, S_{k-1})=\{v\in V(G)\setminus (S_1\cup\dots\cup S_{k-1}): \{v_1, \dots, v_{k-1}, v\}\in E(G), v_i\in S_i \text{ for all } i\in [k-1]\}$.

We say that a $k$-uniform hypergraph $G$ on $n$ vertices is a \emph{$(p,q)$-expander} if for all sets $S_1, \dots, S_{k-1}\subseteq V(G)$ with $|S_i|>p$ for all $i\in [k-1]$, we have $|N(S_1, \dots, S_{k-1})|\geq q$.  

We say that a $k$-uniform hypergraph $G$ on $n$ vertices is a \emph{$(p,q)$-outer-expander} if for all \emph{disjoint} sets $S_1, \dots, S_{k-1}\subseteq V(G)$ with $|S_i|>p$ for all $i\in [k-1]$, we have $|N^+(S_1, \dots, S_{k-1})|+|S_1\cup \dots \cup S_{k-1}|\geq q$.

Given a hypergraph $G$ and an integer $r\geq 2$, let $\halpha_r(G)$ be the largest integer $a$ such that there exists (not-necessarily disjoint) sets $V_1, \dots, V_r$ with $|V_i|=a$ for all $i\in [r]$ such that there are no edges $e$ such that $e\cap V_i\neq \emptyset$ for all $i\in [r]$.

We first make an observation regarding the relationship between $\alpha_k(G)$, $\halpha_k(G)$, $\alpha(G)$.  One takeaway from this observation is that it would make very little difference in our results if we considered bounding $\halpha_k(G)$ instead of $\alpha_k(G)$.  However, it is possible for $\alpha(G)$ to be small and $\alpha_k(G)$ to be large (the disjoint union of cliques of order $n/k$ for instance), so it makes a big difference if we were to consider bounding $\alpha(G)$ instead of $\alpha_k(G)$.  

\begin{observation}\label{obs:alpha}
For all $k$-uniform hypergraphs $G$, $$\floor{\frac{\alpha(G)}{k}}\leq \floor{\frac{\halpha_k(G)}{k}}\leq \alpha_k(G)\leq \halpha_k(G).$$
\end{observation}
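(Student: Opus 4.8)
The plan is to prove the chain by establishing the three consecutive inequalities separately; since each compares adjacent quantities, the full statement follows by transitivity. Throughout, write $a$ for whichever of the three parameters is currently being unpacked, and recall that $\alpha(G)$ denotes the independence number (a set containing no edge), consistent with the F\"uredi bound mentioned above.

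\textbf{The rightmost inequality $\alpha_k(G)\le\halpha_k(G)$} is immediate from the definitions. If $X_1,\dots,X_k$ is a $k$-partite hole realizing $\alpha_k(G)$, then these are in particular $k$ sets of common size $a=\alpha_k(G)$ (pairwise disjoint, hence certainly admissible in the not-necessarily-disjoint definition) with no edge $e$ meeting all of them, so they witness $\halpha_k(G)\ge a$.

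\textbf{The middle inequality $\floor{\halpha_k(G)/k}\le\alpha_k(G)$} is where the factor of $k$ genuinely enters, and I would prove it by a greedy ``disjointification.'' Start from sets $V_1,\dots,V_k$ of common size $a=\halpha_k(G)$, not necessarily disjoint, with no edge meeting all of them; the goal is to carve out of them a genuine (disjoint) $k$-partite hole of size $\floor{a/k}$. Having chosen pairwise disjoint $X_1\subseteq V_1,\dots,X_{i-1}\subseteq V_{i-1}$ of size $\floor{a/k}$ each, the still-available portion $V_i\setminus(X_1\cup\dots\cup X_{i-1})$ has size at least $a-(k-1)\floor{a/k}\ge k\floor{a/k}-(k-1)\floor{a/k}=\floor{a/k}$, so a further disjoint $X_i\subseteq V_i$ of the required size exists. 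Since each $X_i\subseteq V_i$, any edge meeting all the $X_i$ would meet all the $V_i$; as none does, $X_1,\dots,X_k$ is a $k$-partite hole of size $\floor{a/k}$, giving $\alpha_k(G)\ge\floor{a/k}$.

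\textbf{The leftmost inequality $\floor{\alpha(G)/k}\le\floor{\halpha_k(G)/k}$} I would attack through a maximum independent set $S$, $|S|=\alpha(G)$, partitioned (after discarding a remainder of size $<k$) into $k$ pairwise disjoint parts $P_1,\dots,P_k$ of common size $\floor{\alpha(G)/k}$. The key observation is that no edge meets all $k$ parts: an edge $e$ meeting each disjoint $P_j$ would contain $k$ distinct vertices, one per part, and since $\card{e}=k$ this forces $e\subseteq P_1\cup\dots\cup P_k\subseteq S$, contradicting the independence of $S$. Thus the $P_j$ form a valid configuration, which places $\floor{\alpha(G)/k}$ below $\alpha_k(G)$ (and hence below $\halpha_k(G)$). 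The delicate point, and the main obstacle, is to make this bound land on $\floor{\halpha_k(G)/k}$ rather than merely on $\alpha_k(G)$: a partition into the \emph{small} equal parts $P_j$ only certifies $\halpha_k(G)\ge\floor{\alpha(G)/k}$, which is eroded by the outer division by $k$. To recover the stated inequality one must instead produce $k$ \emph{large} (size roughly $\alpha(G)$), necessarily overlapping, sets built from $S$ with no single edge meeting all of them; controlling these overlaps so that a common edge is still ruled out -- rather than being created by a single vertex lying in every $V_i$ -- is precisely the step that requires care, and is where I expect the real work of the proof to lie.
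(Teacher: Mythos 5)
Your proofs of the two rightmost inequalities are correct and are essentially the paper's own: $\alpha_k(G)\leq \halpha_k(G)$ is definitional, and your greedy disjointification for $\floor{\halpha_k(G)/k}\leq \alpha_k(G)$ is exactly the step the paper asserts without justification (the paper simply claims disjoint subsets $V_i'\subseteq V_i$ with $|V_i'|\geq \floor{|V_i|/k}$ exist; your count $|V_i\setminus(X_1\cup\dots\cup X_{i-1})|\geq a-(k-1)\floor{a/k}\geq \floor{a/k}$ supplies the missing detail).

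The gap in your proposal is the leftmost inequality, which you leave unproven --- but the obstacle you identify there is not merely delicate, it is fatal, and on this point your instinct is better than the paper's. The paper's proof of that step is precisely the one-line argument you rejected: take a maximum independent set $S$ and put $V_1=\cdots=V_k=S$, claiming this witnesses $\halpha_k(G)\geq \alpha(G)$. Under the stated definition of $\halpha_k$ this is wrong for exactly the reason you give: an edge containing even a single vertex of $S$ already intersects every $V_i$, while independence only forbids edges lying entirely inside $S$. In fact the inequality $\floor{\alpha(G)/k}\leq\floor{\halpha_k(G)/k}$ is false in general. Take $G=K_{1,3}$ (center $c$, leaves $x,y,z$) and $k=2$: then $\alpha(G)=3$, but any two vertex sets of size $2$ are met by a common edge (if neither contains $c$ they share a leaf $v$, and $cv$ meets both; if one contains $c$, then $cv$ for a leaf $v$ of the other works), so $\halpha_2(G)=1$ and $\floor{3/2}=1>0=\floor{1/2}$. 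What is true --- and what the paper actually needs, both in the remark comparing $\alpha_k$ with $\halpha_k$ and in the corollary on random hypergraphs --- is $\floor{\alpha(G)/k}\leq \alpha_k(G)$, which the partition argument in your third paragraph proves correctly (an edge meeting all $k$ disjoint parts of $S$ must have one vertex in each, hence lie inside $S$). So the statement should be repaired to the two chains $\floor{\alpha(G)/k}\leq \alpha_k(G)\leq \halpha_k(G)$ and $\floor{\halpha_k(G)/k}\leq \alpha_k(G)$; alternatively, the paper's chain and its proof both become correct if $\halpha_k$ is defined via transversal edges (forbidding edges that contain $k$ distinct vertices, one from each $V_i$) rather than via $e\cap V_i\neq\emptyset$ for all $i$. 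Your proposal, completed by the partition argument you already wrote down, establishes everything in the observation that is actually true.
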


\begin{proof}
First note that if $S$ is an independent set, then by letting $V_1=\dots=V_k=S$, we have $\halpha_k(G)\geq |S|$.  So $\alpha(G)\leq \halpha_k(G)$.  Also we clearly have $\alpha_k(G)\leq \halpha_k(G)$ since $\halpha_k$ is computed over a strictly larger domain than $\alpha_k$ (all collections of sets vs. all collections of disjoint sets).  

Now let $V_1, \dots, V_k\subseteq V(G)$ (not-necessarily-disjoint) be sets such that  $|V_1|=\dots=|V_k|$ and there are no edges $e$ such that $e\cap X_i\neq\emptyset$.  For all $i\in [k]$, there exists $V_i'\subseteq V_i$ with $|V_i'|\geq \floor{\frac{|V_i|}{k}}$ such that $V_i'\cap V_j'=\emptyset$ for all distinct $i,j\in [k]$.  Since there are no edges which intersect all of $V_1, \dots, V_k$, there are no edges which intersect all of $V_1', \dots, V_k'$ and thus we have $\alpha_k(G)\geq \floor{\frac{\halpha_k(G)}{k}}$.
\end{proof}

We now make an observation which provides the relationship between small $k$-partite holes and expansion.

\begin{observation}
Let $G=(V,E)$ be a $k$-uniform hypergraph on $n$ vertices.
\begin{enumerate}
\item $G$ is a $(p,n-p)$-expander if and only if $\halpha_k(G)\leq p$.
\item $G$ is a $(p,n-p)$-outer-expander if and only if $\alpha_k(G)\leq p$.
\end{enumerate}
\end{observation}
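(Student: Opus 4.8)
The plan is to prove both biconditionals directly from the definitions, observing that each is essentially a contrapositive restatement of the relevant hole condition. The key structural fact is that in a $k$-uniform hypergraph, asking for an edge meeting all of $S_1,\dots,S_{k-1}$ together with a specified last coordinate is the same as asking for an edge meeting all of $k$ sets, so the neighborhood conditions and the partite-hole conditions are two languages for the same combinatorial object.

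\medskip

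For part (i), I would unwind the negation of the expander property. The hypergraph $G$ fails to be a $(p, n-p)$-expander exactly when there exist sets $S_1,\dots,S_{k-1}$ with $|S_i|>p$ for all $i$ but $|N(S_1,\dots,S_{k-1})|< n-p$, i.e. $|V(G)\sm N(S_1,\dots,S_{k-1})|> p$. Setting $V_k := V(G)\sm N(S_1,\dots,S_{k-1})$ and $V_i := S_i$ for $i\in [k-1]$, the definition of $N$ guarantees that no edge $e$ meets all of $V_1,\dots,V_{k-1}$ while also meeting $V_k$; since all $k$ of these sets have size exceeding $p$, we may shrink them to size exactly $p+1$ and obtain a witness that $\halpha_k(G)> p$, i.e. $\halpha_k(G)\geq p+1$. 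Conversely, a collection $V_1,\dots,V_k$ witnessing $\halpha_k(G)> p$ with no common transversal edge yields, upon taking $S_i:=V_i$ for $i\in[k-1]$, the inclusion $V_k\subseteq V(G)\sm N(S_1,\dots,S_{k-1})$, which forces $|N(S_1,\dots,S_{k-1})|\leq n-|V_k|< n-p$, contradicting the expander property. Chaining these gives: $\halpha_k(G)\leq p$ if and only if $G$ is a $(p,n-p)$-expander.

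\medskip

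Part (ii) runs in exact parallel but with $N^+$ and $\alpha_k$ in place of $N$ and $\halpha_k$, the difference being that $N^+$ excludes $S_1\cup\dots\cup S_{k-1}$ and the outer-expander condition measures $|N^+(S_1,\dots,S_{k-1})|+|S_1\cup\dots\cup S_{k-1}|$ rather than $|N(S_1,\dots,S_{k-1})|$. The point is that $N^+(S_1,\dots,S_{k-1})$ together with $S_1\cup\dots\cup S_{k-1}$ accounts for every vertex that either lies in some $S_i$ or is an \emph{outer} neighbor, so the vertices avoided by the outer-expander quantity are precisely the candidates for a \emph{disjoint} last part $V_k$ with no transversal edge. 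Here the disjointness built into both the outer-expander definition and the definition of $\alpha_k$ must be tracked carefully: when I build $V_k$ as the complement of $N^+(S_1,\dots,S_{k-1})\cup S_1\cup\dots\cup S_{k-1}$ it is automatically disjoint from each $S_i$, and conversely a disjoint witness for $\alpha_k(G)> p$ lands in that complement.

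\medskip

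I expect the only real subtlety to be bookkeeping with the strict-versus-nonstrict inequalities ($|S_i|>p$ versus $|V_i|=a$ with $a>p$) and making sure the ``shrink to size $p+1$'' step is legitimate — shrinking the $S_i$ can only shrink a neighborhood, and shrinking $V_k$ preserves the no-transversal-edge property since it is a downward-closed condition, so both directions survive the passage to sets of a common size. No deep idea is required; the content is entirely definitional, and the proof amounts to verifying that the two definitions are complementary phrasings of the same quantifier alternation.
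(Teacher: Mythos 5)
Your proof is correct and takes essentially the same route as the paper's: both directions come from unwinding the definitions, passing between a deficient (outer‑)neighborhood and its complement serving as the $k$-th set of a hole, with the disjointness bookkeeping in part (ii) handled exactly as the paper handles it. The only cosmetic differences are that you prove the ``expander $\Rightarrow$ $\halpha_k(G)\leq p$ (resp.\ $\alpha_k(G)\leq p$)'' direction by contradiction, where the paper argues directly that $S_k$ must meet the large neighborhood $N(S_1,\dots,S_{k-1})$ (resp.\ $N^+(S_1,\dots,S_{k-1})$), and that your shrinking of the witness sets to size exactly $p+1$ is harmless but unnecessary.
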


\begin{proof}
(i) Let $S_1, \dots, S_{k-1}\subseteq V$ with $|S_i|>p$ for all $i\in [k-1]$.  If $|N(S_1, \dots, S_{k-1})|<n-p$, then $|V\setminus N(S_1, \dots, S_{k-1})|>p$ and there are no edges touching all of $S_1, \dots, S_{k-1}, V\setminus N(S_1, \dots, S_{k-1})$ which implies $\halpha_k(G)>p$.  

Now suppose $G$ is a $(p,n-p)$-expander and let $S_1, \dots, S_{k}\subseteq V$ with $|S_i|>p$ for all $i\in [k]$.  Since $|N(S_1, \dots, S_{k-1})|\geq n-p$, we have $S_k\cap N(S_1, \dots, S_{k-1})\neq \emptyset$; i.e. there is an edge which touches all of $S_1, \dots, S_k$ and thus $\halpha_k(G)\leq p$.

(ii) Let $S_1, \dots, S_{k-1}\subseteq V$ be disjoint sets with $|S_i|>p$ for all $i\in [k-1]$.  If $|N^+(S_1, \dots, S_{k-1})|+|S_1\cup \dots \cup S_{k-1}|<n-p$, then $|V\setminus (N^+(S_1, \dots, S_{k-1})\cup (S_1\cup\dots\cup S_{k-1}))|>p$ and there are no edges touching all of $S_1, \dots, S_{k-1}, V\setminus (N^+(S_1, \dots, S_{k-1})\cup (S_1\cup\dots\cup S_{k-1}))$ which implies $\alpha_k(G)>p$.  

Now suppose $G$ is a $(p,n-p)$-outer-expander and let $S_1, \dots, S_{k}\subseteq V$ be disjoint sets with $|S_i|>p$ for all $i\in [k]$.  Since $|N^+(S_1, \dots, S_{k-1})|+|S_1\cup \dots \cup S_{k-1}|\geq n-p$, we have $S_k\cap N^+(S_1, \dots, S_{k-1})\neq \emptyset$; i.e. there is an edge which touches all of $S_1, \dots, S_k$ and thus $\alpha_k(G)\leq p$.
\end{proof}

\subsection{Outline of Paper}
In Section \ref{sec:duality}, we discuss a reformulation of our problem in the dual language of $r$-partite $r$-uniform hypergraphs which we will work with for the remainder of the paper. In Section \ref{sec:examples} we provide examples which show the tightness of our results. In particular, this section contains proofs of Theorem \ref{thm:r2nu2}(i)(b), (ii), (iii), Theorem \ref{thm:r3nu2} (ii), Theorem \ref{thm:rrnur} (ii) and Theorem \ref{thm:rrnur-1} (ii). 
In Section \ref{sec:maindual} we prove Theorem \ref{thm:r2nu2}(i)(a), Theorem \ref{thm:r3nu2}(i), Theorem \ref{thm:rrnur}(i), Theorem \ref{thm:rrnur-1}(i), and Proposition \ref{propweak}.

\section{Duality}\label{sec:duality}

Throughout the rest of the paper we will be talking about multi-hypergraphs and we will always assume that all of the edges are distinguishable (and more generally, we assume that all of the elements in a multi-set are distinguishable).  This means, for example, that if an edge has multiplicity $5$, we can partition those five edges into two disjoint sets of say $3$ and $2$ edges respectively.

Let $r,k\geq 2$ be integers.  Given an $r$-partite $r$-uniform multi-hypergraph $H$ and multisets of edges $E_1, \dots, E_k$, we say that $E_1, \dots, E_k$ is \emph{cross-intersecting} if there exists $e_i\in E_i$ for all $i\in [k]$ such that $e_1\cap \dots \cap e_k\neq \emptyset$.  Furthermore, if $S\subseteq V(H)$, we say that $E_1, \dots, E_k$ is \emph{cross-intersecting in $S$} if there exists $e_i\in E_i$ for all $i\in [k]$ such that $S\cap e_1\cap \dots \cap e_k\neq \emptyset$.

Let $\nu_k(H)$ be the largest integer $m$ such that there exists multisets of edges $E_1, \dots, E_k$ with $|E_i|=m$ for all $i\in [k]$ and $E_i\cap E_j=\emptyset$ for all distinct $i,j\in [k]$ such that $E_1, \dots, E_k$ is not cross-intersecting; i.e. $e_1\cap e_2\cap \dots \cap e_k=\emptyset$ for all $e_1\in E_1$, $e_2\in E_2$, $\dots$, $e_k\in E_k$.

\subsection{Monochromatic components and $k$-partite holes}

The following observation precisely describes what we mean by ``duality.''

\begin{observation}[Duality]\label{dual:r}
Let $n\geq 1$, $r,k\geq 2$, and $s,t\geq 0$.  The following are equivalent:
\begin{enumerate}
\item Let $G$ be a $k$-uniform hypergraph on $n$ vertices.  If $\alpha_k(G)\leq s$, then $\mc_r(G)\geq t$
\item Let $H$ be an $r$-partite $r$-uniform multi-hypergraph with $n$ edges.  If $\nu_k(H)\leq s$, then $\Delta(H)\geq t$.
\end{enumerate}
\end{observation}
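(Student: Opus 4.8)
The plan is to prove the two implications directly by setting up an explicit correspondence between a $k$-uniform hypergraph $G$ on $n$ vertices equipped with an $r$-edge-coloring $c$ and an $r$-partite $r$-uniform multi-hypergraph $H$ with $n$ edges. The dictionary is: the $n$ \emph{vertices} of $G$ become the $n$ \emph{edges} of $H$; the $r$ color classes become the $r$ parts of $H$, where the part associated to color $i$ consists of the color-$i$ monochromatic components of $G$ (a color-$i$-isolated vertex being regarded as its own singleton component); and the edge $e_x$ of $H$ corresponding to a vertex $x$ of $G$ is the $r$-set $\{M_1(x),\dots,M_r(x)\}$, where $M_i(x)$ is the color-$i$ component containing $x$. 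Under this dictionary a vertex $M$ of $H$ (a monochromatic component) is incident to exactly those edges $e_x$ with $x\in M$, so $\deg_H(M)=|M|$ and hence $\Delta(H)$ equals the order of the largest monochromatic component of $(G,c)$. The heart of the matter is to show that the independence-type parameter $\alpha_k(G)$ matches the cross-intersection parameter $\nu_k(H)$.

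For (ii) $\Rightarrow$ (i), I would assume (ii), fix $G$ with $\alpha_k(G)\le s$ and an arbitrary $r$-coloring $c$, and build $H$ as above. First I claim $\nu_k(H)\le \alpha_k(G)$. Given disjoint edge-multisets $E_1,\dots,E_k$ of $H$ that are not cross-intersecting, I view them via the dictionary as disjoint vertex sets $X_1,\dots,X_k$ of $G$ of the same sizes, and check that $X_1,\dots,X_k$ is a $k$-partite hole: if some $f\in E(G)$ met all the $X_j$, then picking $x_j\in f\cap X_j$ and letting $i=c(f)$ would place $x_1,\dots,x_k$ in the common color-$i$ component containing $f$, producing a common vertex of $e_{x_1},\dots,e_{x_k}$ and contradicting non-cross-intersection. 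Thus $\nu_k(H)\le\alpha_k(G)\le s$, so by (ii) $\Delta(H)\ge t$; since $\Delta(H)$ is the order of the largest monochromatic component of $c$, the coloring $c$ has a monochromatic component of order at least $t$, and as $c$ was arbitrary, $\mc_r(G)\ge t$.

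For (i) $\Rightarrow$ (ii), I would assume (i), fix $H$ with $\nu_k(H)\le s$, and reverse the construction: let $V(G)$ be the (distinguishable) edge set of $H$, and let the color-$i$ edges of $G$ be exactly the $k$-subsets $\{e_1,\dots,e_k\}\subseteq E(H)$ whose members share a common vertex lying in part $i$, so that $\{e_1,\dots,e_k\}\in E(G)$ iff $e_1\cap\dots\cap e_k\neq\emptyset$. Since each edge of $H$ meets each part in exactly one vertex, every edge of $G$ lies in a unique fiber $F_u=\{e:u\in e\}$ of a unique vertex $u$ of its color; consequently every monochromatic component of $G$ is contained in a single fiber and so has order at most $\max_u \deg_H(u)=\Delta(H)$. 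I would then argue $\alpha_k(G)=\nu_k(H)$, because a $k$-partite hole $X_1,\dots,X_k$ is precisely a family of disjoint edge sets that is not cross-intersecting: a transversal $e_j\in X_j$ with a common vertex $u$ is the same thing as the $G$-edge $\{e_1,\dots,e_k\}\subseteq F_u$ meeting all the $X_j$. Hence $\alpha_k(G)=\nu_k(H)\le s$, so by (i) the coloring just defined has a monochromatic component of order at least $t$; since all such components have order at most $\Delta(H)$, we conclude $\Delta(H)\ge t$.

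The only genuinely delicate points, which I would treat carefully, are the bookkeeping conventions that make the correspondence exact: treating a color-isolated vertex of $G$ as its own component (so that $H$ is genuinely $r$-uniform and $\deg_H(M)=|M|$ holds with no exceptions), and, in the reverse construction, the possibility that a single $k$-set $\{e_1,\dots,e_k\}$ has common vertices in several parts. In the latter case one simply assigns the $G$-edge any one admissible color; this does not affect the argument, since the containment of each monochromatic component in a single fiber (hence the bound $\le\Delta(H)$) and the equivalence between holes and non-cross-intersecting families both depend only on which $k$-sets are edges of $G$ and in which fibers they lie, not on the chosen colors. Verifying that these conventions leave $\Delta(H)$, $\nu_k(H)$, $\alpha_k(G)$, and the component orders exactly as claimed is routine once the dictionary is fixed.
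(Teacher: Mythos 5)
Your proposal is correct and follows essentially the same route as the paper: both directions use the identical dictionary (vertices of $G$ $\leftrightarrow$ edges of $H$, monochromatic components of color $i$ $\leftrightarrow$ vertices of part $i$, so that $\Delta(H)$ is the largest monochromatic component), with the same arguments that non-cross-intersecting families pull back to $k$-partite holes and vice versa. The only cosmetic differences are that the paper lets an edge of $G$ receive several colors where you pick one admissible color, and it only records the inequalities $\alpha_k(G)\le\nu_k(H)$ and $\nu_k(H)\le\alpha_k(G)$ needed in each direction rather than your equality claim; neither difference affects correctness.
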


\begin{proof}
First, suppose (i). Let $H$ be an $r$-partite $r$-uniform multi-hypergraph with $\nu_k(H)\leq s$.  Let the parts of $H$ be labeled $C^i=\{C^i_1, \dots, C^i_{k_i}\}$ for all $i\in [r]$.   Let $G$ be an $r$-edge colored $k$-uniform hypergraph with $V(G)=E(H)$ where $\{e_1,\dots, e_k\}\in E(G)$ of color $i$ if and only if $(e_1\cap \dots\cap e_k)\cap C^i\neq \emptyset$ (in $H$); note that an edge of $G$ may receive multiple colors.  If $E_1,E_2, \dots, E_k\subseteq V(G)$ such that $e_G(E_1, E_2, \dots, E_k)=0$, then $\bigcap_{i\in [k]}(\bigcup_{e\in E_i} e)=\emptyset$ (in $H$).  So we have $\alpha_k(G)\leq \nu_k(H)\leq s$, which by the assumption implies that $G$ has a monochromatic component $C^i_j$ of order at least $t$.  Since $C^i_j$ has at least $t$ vertices, this implies that the vertex $C^i_j$ in $H$ has degree at least $t$.

Next, suppose (ii).  Let $G$ be a $k$-uniform hypergraph with $\alpha_k(G)\leq s$. Suppose we are given an $r$-coloring $\phi$ of $G$ and let $C^i_1, \dots, C^i_{k_i}$ be the components of $G$ of color $i$ for all $i\in [r]$.  Let $H$ be an $r$-partite $r$-uniform multi-hypergraph with parts $C^i=\{C^i_1, \dots, C^i_{k_i}\}$ for all $i\in [r]$ where $\{C^1_{j_1},\dots, C^r_{j_r}\} \in E(H)$ is an edge of multiplicity $m$ if and only if $|\bigcap_{i\in [r]} V(C^i_{j_i})|=m$ (in $G$); note that ``an edge of multiplicity 0'' just means a non-edge.  Note that $|V(G)|=|E(H)|$ (in fact, we essentially have $E(H)=V(G)$) since every vertex in $G$ is in exactly one component of each color.  If there exists $E_1, E_2, \dots, E_k\subseteq E(H)$ such that $E_i\cap E_j=\emptyset$ for all distinct $i,j\in [k]$ and $e_1\cap e_2\cap \dots \cap e_k=\emptyset$ for all $e_1\in E_1$, $e_2\in E_2$, $\dots$, $e_k\in E_k$, then $e_G(E_1, E_2, \dots, E_k)=0$ because any such edge intersecting all of $E_1, \dots, E_k$ (in $G$) would violate $e_1\cap e_2\cap \dots \cap e_k=\emptyset$ (in $H$).  So we have $\nu_k(H)\leq \alpha_k(G)\leq s$ which by the assumption implies $\Delta(H)\geq t$.  Without loss of generality, suppose $d_H(C^1_1)=\Delta(H)\geq t$ which means $C^1_1$ is a component of color $1$ in $G$ with at least $t$ vertices.
\end{proof}

\subsection{Monochromatic components and independence number}\label{sec:Furedi}

For expository reasons and as a comparison to the result in the last subsection, we describe F\"uredi's classic example of the use of duality. 

For a hypergraph $H$, let $\tau(H)$ denote the vertex cover number, let $\nu(H)$ denote the matching number and let $\tau^*$ and $\nu^*$ denote the respective fractional versions.  Ryser conjectured that for every $r$-partite (multi)hypergraph $H$, $\tau(H)\leq (r-1)\nu(H)$.  F\"uredi \cite{F} proved a fractional version; that is, for every $r$-partite (multi)hypergraph $H$, $\tau^*(H)\leq (r-1)\nu(H)$.  Since $$\frac{n}{\Delta(H)}\leq \nu^*(H)=\tau^*(H)\leq (r-1)\nu(H),$$ it follows that for every $r$-partite (multi)hypergraph $H$ with $n$ edges, $\Delta(H)\geq \frac{n}{(r-1)\nu(H)}$.  
In the dual language, this says for every graph $G$ on $n$ vertices, $\mc_r(G)\geq \frac{n}{(r-1)\alpha(G)}$.

\section{Examples}\label{sec:examples}

The first example provides the upper bound in Theorem \ref{thm:r2nu2}.(ii).  

\begin{observation}\label{mc2_upper}Let $2\leq r\leq k$.  
For all $k$-uniform hypergraphs $G$ on $n$ vertices, $\mc_r(G)\leq n-\alpha_r(G)$.
\end{observation}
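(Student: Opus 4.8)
The plan is to use the definition of $\mc_r$ in the ``easy'' direction: to establish $\mc_r(G)\le n-\alpha_r(G)$ it is enough to produce a single $r$-coloring of $E(G)$ in which every monochromatic component has order at most $n-\alpha_r(G)$, since then no coloring-independent lower bound can exceed $n-\alpha_r(G)$. I would build such a coloring directly from a largest $r$-partite hole.

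Set $a=\alpha_r(G)$ and fix pairwise disjoint sets $X_1,\dots,X_r\subseteq V(G)$, each of size $a$, realizing the hole. By definition of an $r$-partite hole, no edge $e\in E(G)$ satisfies $e\cap X_i\neq\emptyset$ for all $i\in[r]$; equivalently, \emph{every} edge $e$ misses at least one part, i.e.\ there is an index $i\in[r]$ with $e\cap X_i=\emptyset$. I would color each edge $e$ by one such missed index $i(e)$, chosen arbitrarily when several are available. This yields an $r$-coloring of $E(G)$ whose defining feature is that no edge of color $i$ meets $X_i$.

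Fixing a color $i$, no vertex of $X_i$ is incident to any color-$i$ edge, so in the color-$i$ hypergraph each vertex of $X_i$ is isolated. Hence every color-$i$ component is either a single vertex or is contained in $V(G)\setminus X_i$, and in the latter case has order at most $n-|X_i|=n-a$; the isolated vertices have order $1\le n-a$ (here $a\le n/r\le n/2$ forces $n-a\ge 1$). Thus every monochromatic component under this coloring has order at most $n-a$, which is exactly the bound $\mc_r(G)\le n-\alpha_r(G)$.

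There is no substantial obstacle: the entire argument rests on the single observation that the hole condition forces each edge to avoid some $X_i$, after which the bookkeeping ``color-$i$ edges avoid $X_i$'' is immediate. The hypothesis $r\le k$ is not needed for the argument itself---it simply pins down the regime where $\mc_r(K_n^k)=n$, so that $n-\alpha_r(G)$ is the natural upper bound to compare against. The only point requiring care is remembering that isolated vertices still count as (order-$1$) components, which never violate the bound since $n-a\ge 1$.
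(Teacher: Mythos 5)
Your proof is correct and is essentially the same as the paper's: both fix disjoint sets $X_1,\dots,X_r$ witnessing $\alpha_r(G)$ and color each edge by an index $i$ of a part it misses, so that every color-$i$ component avoids $X_i$ and hence has order at most $n-\alpha_r(G)$. The only cosmetic difference is that the paper assigns every missed index as a color (allowing edges to receive several colors), whereas you pick one arbitrarily; both yield the same bound.
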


\begin{proof}
Let $X_1, \dots, X_r$ be disjoint sets which witness the value of $\alpha_r(G)$; that is, disjoint sets with $|X_i|=\alpha_r(G)$ for all $i\in [r]$ such that $e(X_1, \dots, X_r)=\emptyset$.  For all $i\in [r]$, color all edges not incident with $X_i$ with color $i$ (so edges may receive many colors).  Since every edge misses some $X_i$, every edge receives at least one color.  So every component of color $i$ avoids $X_i$ and thus has order at most $n-\alpha_r(G)$.
\end{proof}

The next example provides the proof of Theorem \ref{thm:r2nu2}.(iii) and Theorem \ref{thm:rrnur}.(ii).

\begin{example}\label{mcr_upper}
For all integers $n\geq r\geq 2$ and $0\leq a\leq n/(r+2)$, there exists a $r$-uniform hypergraph $G$ on $n$ vertices with $\alpha_r(G)=a$ such that $\mc_r(G)= n-2\alpha_r(G)$.
\end{example}

\begin{proof}
Let $V$ be a set of order $n$ and let $\{V_0,V_1, \dots, V_r,V_{r+1}\}$ be a partition of $V$ with $|V_0|=n-(r+1)a$, $|V_1|=\dots=|V_{r}|=|V_{r+1}|=a$.  Let $G$ be an $r$-uniform hypergraph on $V$ where $e$ is an edge if and only if $|e|=r$ and $e\subseteq V_0\cup\bigcup_{i\in [r]\setminus\{j\}}V_i$ for some $j\in [r]$, or $e\subseteq V_j\cup V_{r+1}$ for some $j\in[r]$.  If $e\subseteq V_j\cup V_{r+1}$ or $e\subseteq V_0\cup\bigcup_{i\in [r]\setminus\{j\}}V_i$, color $e$ with $j$.  Note that for all $j\in [r]$, there is a monochromatic component of color $j$ containing $V_j\cup V_{r+1}$ and a disjoint monochromatic component of color $j$ containing $V_0\cup\bigcup_{i\in [r]\setminus\{j\}}V_i$ and thus the largest monochromatic component has order $\max\{n-2a,2a\}=n-2a$ as desired.  It is straightforward, albeit tedious, to check that $\alpha_r(G)=a$.
\end{proof}

For expository reasons, we give the same example as above in the dual language. 

\begin{example}\label{mcr_upper_dual}
For all integers $n\geq r\geq 2$ and $a\geq 0$ with $a\leq n/(r+2)$, there exists an $r$-uniform hypergraph $H$ on $n$ vertices with $\nu_r(H)=a$ such that $\Delta(H)= n-2\nu_r(H)$.
\end{example}

\begin{proof}
Let $H$ be an $r$-partite hypergraph with two vertices $u_i, v_i$ in each part.  Let $\{v_1, \dots, v_r\}$ be an edge of multiplicity $a$.  For all $i\in [r]$, let $\{u_1, \dots, u_{i-1}, v_i, u_{i+1}, \dots, u_r\}$ be an edge of multiplicity $a$.  Finally, let $\{u_1, \dots, u_r\}$ be an edge of multiplicity $n-(r+1)a$.  
Note that every vertex in $\{u_1, \dots, u_r\}$ has degree $n-2a$ and every vertex in $\{v_1, \dots, v_n\}$ has degree $2a$, so $\Delta(H)= \max\{n-2a,2a\}=n-2a$ as desired.  It is again straightforward to see that $\nu_r(H)=a$.
\end{proof}

The next example provides the proof of Theorem \ref{thm:rrnur-1}(ii).

\begin{example}For all $r\geq 2$ and $1\leq a\leq n/k$, there exists a $k$-uniform hypergraph $G$ on $n$ vertices with $\alpha_k(G)=a$ such that $\mc_r(G)\leq \mc_r(K_{n-a}^k)<\mc_r(K_n^k)$.
\end{example}

\begin{proof}
Let $G$ be a complete $k$-uniform hypergraph on $n-a$ vertices together with $a$ isolated vertices.  We have $\mc_r(G)=\mc_r(K_{n-a}^k)<\mc_r(K_{n}^a)$.  
\end{proof}

The next example provides the proof of Theorem \ref{thm:r2nu2}.(i)(b).  For instance when $s=3$, $t=4$, this gives an example of a 2-colored graph $G$ with $\alpha_2(G)=\frac{n}{6}$ where the largest monochromatic component has order $n/3$. 

\begin{example}\label{ex:grid}
Let $n\geq t\geq s$ be positive integers such that $st$ divides $n$.  The 2-colored $(s,t)$-grid, denoted $G_2(s,t)$, is the graph obtained by partitioning $[n]$ into $st$ sets $A_{11}, \dots, A_{1t}$, $A_{21}, \dots, A_{2t}$, $\dots$, $A_{s1}, \dots, A_{st}$, each of order $\frac{n}{st}$.  For all $i\in [s]$, let $A_{i1}\cup \dots \cup A_{it}$ be a red clique and let $A_{1i}\cup \dots \cup A_{si}$ be a blue clique (edges in the intersection can be colored with both colors say).  The largest monochromatic component has order at most $n/s$ and $\alpha_2(G_2(s,t))=\frac{\ceiling{s/2}\floor{t/2}}{st}n$.
\end{example}

\begin{proof}
The fact that the largest monochromatic component has order at most $n/s$ is evident.  To see that $\alpha_2(G_2(s,t))=\frac{\ceiling{s/2}\floor{t/2}}{st}n$, let $X, Y\subseteq V(G_2(s,t))$ be maximal disjoint sets witnessing the value of $\alpha_2(G)$; i.e.\ $\min\{|X|, |Y|\}=\alpha_2(G)$ and $e(X,Y)=0$.  By the maximality of $X,Y$  and the structure of $G_2(s,t)$, we have that if $X\cap A_{ij}\neq \emptyset$ then $X\cap A_{ij}=A_{ij}$ and likewise $Y\cap A_{ij}\neq \emptyset$ implies $Y\cap A_{ij}=A_{ij}$.  Let $I=\{i\in [s]: X\cap A_{ij}\neq \emptyset$ for some $j\in [t]\}$ and $J=\{j\in [t]: X\cap A_{ij}\neq \emptyset$ for some $i\in [s]\}$.  This implies that if $Y\cap A_{ij}\neq \emptyset$, then $i\in [s]\setminus I$ and $j\in [t]\setminus J$. So we have $|X|=\frac{|I||J|n}{st}$ and $|Y|=\frac{(s-|I|)(t-|J|)n}{st}$ and $\alpha_2(G)=\min\{|X|, |Y|\}$ is maximized when $|I|=\ceiling{s/2}$ and $|J|=\floor{t/2}$ (equivalently, $|I|=\floor{s/2}$ and $|J|=\ceiling{t/2}$). 
\end{proof}

For random graphs $G(n,p)$, it was shown in \cite{BD1} and \cite{DP2} that $\mc_r(\gnp)\geq \left(\frac{1}{r-1}-o(1)\right)n$, and thus whenever an affine plane of order $r-1$ exists, 
we have $\mc_r(G(n,p))= (1-o(1))\mc_r(K_n)$.  The following observation shows that we cannot hope for $\mc_r(G(n,p))= \mc_r(K_n)$ (unless $p$ is close to 1 and thus the minimum degree of $G(n,p)$ is close to $n$; see \cite{FL}).

\begin{observation}\label{obs:rg-not-all}
Let $r$ and $C$ be integers with $r\geq 2$ and $C\geq 1$.  
If an affine plane of order $r-1$ exists, then for sufficiently small $\frac{\omega(1)}{n}=p<1$ we have w.h.p., $\mc_r(\gnp)\leq \frac{n}{r-1}-C.$
\end{observation}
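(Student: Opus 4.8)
The plan is to exhibit, with high probability, a single $r$-coloring of $\gnp$ in which every monochromatic component has order at most $\frac{n}{r-1}-C$; by the definition of $\mc_r$ this is exactly what is required. The starting point is Gy\'arf\'as' extremal coloring witnessing $\mc_r(K_n)\le\frac{n}{r-1}$ (Theorem \ref{thm:Gy}(i)): I fix an affine plane of order $r-1$, partition $V(\gnp)$ into $(r-1)^2$ groups $\{V_p\}$ indexed by the points $p$ of the plane, each of size $\floor{n/(r-1)^2}$ or $\ceiling{n/(r-1)^2}$, and color an edge between $V_p$ and $V_{p'}$ (with $p\neq p'$) by the parallel class $i\in[r]$ of the unique line through $p$ and $p'$. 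Edges lying inside a single group I color with color $1$. Under this coloring every color-$i$ component is confined to the union $U$ of the $r-1$ groups indexed by the points on a single line of class $i$, so $|U|\le\frac{n}{r-1}+(r-1)$, and distinct lines of the same class give vertex-disjoint unions.

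The key structural point is that one cannot improve on $\frac{n}{r-1}$ merely by unbalancing the partition: since each parallel class splits the groups into $r-1$ line-unions whose sizes sum to $n$, some line-union always has at least $\frac{n}{r-1}$ vertices. The saving must therefore come from the random graph itself, and the mechanism I would exploit is that an isolated vertex of $\gnp$ forms a singleton component in every color and is hence excluded from every large monochromatic component. Concretely, each monochromatic component is either a singleton or is contained in some line-union $U$ while containing none of the isolated vertices of $\gnp$ lying in $U$; thus its order is at most $|U|$ minus the number of isolated vertices in $U$.

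The heart of the argument is then to guarantee enough isolated vertices in every line-union. I would choose $p=\omega(1)/n$ small enough that $n(1-p)^{n-1}\to\infty$ quickly --- for instance any $p=g(n)/n$ with $g\to\infty$ and $g(n)=o(\log n)$, so that the expected number of isolated vertices in a fixed group of size $\Theta(n)$ is $\Theta(n^{1-o(1)})\to\infty$. A second-moment (Chebyshev) estimate then shows that a fixed group contains at least $C+r$ isolated vertices with probability $1-o(1)$, and a union bound over the constantly many $(r-1)^2$ groups shows that w.h.p.\ \emph{every} group does. Since each line-union is a union of $r-1$ groups, it then contains at least $(r-1)(C+r)\ge C+(r-1)$ isolated vertices, so every non-singleton component has order at most $|U|-(C+(r-1))\le\frac{n}{r-1}-C$, while singletons trivially have order $1\le\frac{n}{r-1}-C$ for large $n$.

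The main obstacle is conceptual rather than computational: recognizing that unbalancing the partition is useless (pigeonhole forces a line-union of size $\ge\frac{n}{r-1}$) and that the required saving of $C$ must instead be extracted from isolated vertices, which in turn forces $p$ into the sub-$(\log n)/n$ regime where isolated vertices are plentiful. This is precisely why the statement requires $p$ to be ``sufficiently small''; for larger $p$ the minimum degree grows and no vertices can be shaved off this way. The concentration of the isolated-vertex counts, together with the minor divisibility and small-$r$ bookkeeping, is then routine.
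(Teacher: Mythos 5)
Your argument is internally sound, and it does establish the observation under a reasonable reading of ``sufficiently small'': the affine-plane coloring confines every monochromatic component to a line-union of size at most $\frac{n}{r-1}+(r-1)$, isolated vertices are excluded from every non-singleton monochromatic component, and your second-moment estimate correctly yields at least $C+r$ isolated vertices in each of the $(r-1)^2$ groups w.h.p.\ when $np=o(\log n)$. But your mechanism is genuinely different from the paper's. The paper does not use isolated vertices at all: it shows that w.h.p.\ $G(n,p)$ contains an independent set $A$ of constant size $Cr$ whose \emph{joint} neighborhood $\bigcup_{v\in A}N(v)$ is small enough to fit inside a single cell of the affine-plane partition. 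One then colors $G-A$ by the affine-plane coloring (placing $\bigcup_{v\in A}N(v)$ in a designated cell), splits $A$ into $r$ sets $A_1,\dots,A_r$ of size $C$, and gives every edge meeting $A_i$ color $i$; each color-$i$ component then consists of at most one line-union of $G-A$ together with $A_i$, giving order at most $\frac{n-Cr}{r-1}+C$.

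The difference matters because of the admissible range of $p$. Isolated vertices exist only below the connectivity threshold, so your proof needs $np-\log n\to-\infty$ and says nothing for, e.g., $p=(\log n\log\log n)/n$. By contrast, a constant-size independent set with joint neighborhood of size at most $\frac{n-Cr}{(r-1)^2}$ exists w.h.p.\ for every $p=\omega(1)/n$ up to a small constant depending on $r$ and $C$, so the paper's proof covers that entire range. In particular, your closing claim --- that the ``sufficiently small'' hypothesis is needed precisely because for larger $p$ ``no vertices can be shaved off'' --- is a misreading: what disappears at $p\approx\log n/n$ is only your chosen mechanism (isolated vertices), not the phenomenon itself. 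As the discussion around the observation notes (via \cite{FL}), the conclusion can only fail once the minimum degree of $G(n,p)$ is close to $n$, i.e.\ $p$ close to $1$. So your proof buys a very concrete, elementary saving device at the cost of a much narrower regime; if you want the full strength of the observation, replace isolated vertices by a constant-size independent set whose neighborhood you can corral into one cell.
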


\begin{proof}
We choose $p$ small enough such that with high probability, $G(n,p)$ has an independent set $A$ with $Cr$ vertices such that $|\bigcup_{v\in A}N(v)|\leq \frac{n-Cr}{(r-1)^2}$.  Partition $A$ into $r$ sets $\{A_1, \dots, A_r\}$ each of order $C$ and partition the vertices of $G$ into sets of size $\frac{n-Cr}{(r-1)^2}$ with one of those sets containing $\bigcup_{v\in A}N(v)$ and color the edges of $G-A$ according to the affine plane coloring.  Now color all edges incident with $A_i$ with color $i$ for all $i\in [r]$.  So every component of color $i$ has order at most $\frac{n-Cr}{r-1}+C\leq \frac{n}{r-1}-C$.
\end{proof}

For random hypergraphs $H^k(n,p)$ it is not clear to us whether for $r>k$ we can hope for $\mc_r(H^k(n,p))= \mc_r(K_n^k)$ whenever we have $\mc_r(H^k(n,p))= (1-o(1))\mc_r(K_n^k)$.

\begin{problem}\label{prob:rhg-not-full}
Let $r$ and $C$ be integers with $r\geq 4$ and $C\geq 1$.
Prove that for sufficiently small $\frac{\omega(1)}{n^{r-2}}=p<1$ we have w.h.p., $$\left(\frac{r-1}{r}-o(1)\right)n\leq \mc_r(H^{r-1}(n,p))\leq \frac{(r-1)n}{r}-C.$$
\end{problem}

\section{Main results in the dual language}\label{sec:maindual}

All of the results of this section are of the type ``For all $k, r$ there exists $c_{k,r}, d_{k,r}$ such that if $G$ is a $k$-uniform hypergraph on $n$ vertices with $\alpha_k(G)<c_{k,r} n$, then $\mc_r(G)\geq \mc_r(K_n^k)-d_{k,r}\alpha_k(G)$;'' however we prove these statements in the equivalent dual form ``For all $k, r$ there exists $c_{k,r}, d_{k,r}>0$ such that if $H$ is an $r$-partite $r$-uniform multihypergraph with $n$ edges and $\nu_k(H)<c_{k,r}$, then $\Delta(H)\geq \mc_r(K_n^k)-d_{k,r}\nu_k(G)$.''

\begin{theorem}[Dual of Theorem \ref{thm:r2nu2}(i)(a)]\label{dual:r2nu2}
Let $H$ be a bipartite multigraph with $n$ edges.  If $\nu_2(H)<n/6$, then $\Delta(H)\geq n-2\nu_2(H)$.
\end{theorem}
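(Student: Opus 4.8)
The plan is to work throughout with a ``balanced bipartition'' reformulation of $\nu_2$. Since $H$ is bipartite, for any partition $V(H)=S_1\sqcup S_2$ the edges lying inside $S_1$ and those inside $S_2$ have disjoint vertex supports and are therefore not cross-intersecting; conversely, if $E_1,E_2$ are disjoint and not cross-intersecting then $V(E_1)\cap V(E_2)=\emptyset$, so taking $S_1=V(E_1)$ and $S_2=V(H)\setminus S_1$ recovers a partition. Writing $e(S)$ for the number of edges (with multiplicity) spanned by $S$, this gives
\[\nu_2(H)=\max_{V(H)=S_1\sqcup S_2}\min\{e(S_1),e(S_2)\}.\]
I would then reduce the whole theorem to the single ``win--win'' inequality
\[\nu_2(H)\ \ge\ \min\Bigl\{\tfrac{n}{6},\ \tfrac{n-\Delta(H)}{2}\Bigr\}.\qquad(\star)\]
Granting $(\star)$ the theorem is immediate: if $\nu_2(H)<n/6$ then the minimum in $(\star)$ cannot be $n/6$, so it equals $\tfrac{n-\Delta}{2}$, whence $\nu_2(H)\ge\tfrac{n-\Delta}{2}$, i.e. $\Delta\ge n-2\nu_2(H)$.

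To prove $(\star)$ I would first dispose of the ``spread out'' case using only the component structure. Let $m_1\ge m_2\ge\cdots$ be the edge counts of the connected components of $H$, so $\sum_i m_i=n$. If $m_1\le\tfrac56 n$, a short subset-sum argument produces a sub-collection of components with total edge count in $[\tfrac{n}{6},\tfrac{5n}{6}]$: if some $m_i\ge n/6$ take that component alone, and otherwise every $m_i<n/6$, so the running prefix sums first cross $n/6$ while still lying in $[\tfrac n6,\tfrac n3]$. Putting this sub-collection in $S_1$ and the rest in $S_2$ --- legitimate as distinct components are vertex-disjoint --- yields $\min\{e(S_1),e(S_2)\}\ge n/6$, which dominates the right-hand side of $(\star)$. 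Hence I may assume there is a unique \emph{giant} component $C^\ast$ with $e(C^\ast)>\tfrac56 n$, all other components together spanning fewer than $n/6$ edges.

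The heart of the matter is the giant-component case, and this is where I expect the real difficulty. Here I would bring in a maximum-degree vertex $x_0\in C^\ast$ and study $C^\ast-x_0$: its connected pieces, together with the leftover small components, form a vertex-disjoint family avoiding $x_0$ with $n-\Delta$ edges in total. Two-grouping this family and placing $x_0$ with the lighter group produces a partition whose two sides span at least the two group totals; whenever the family can be balanced to value $\ge\min\{n/6,\tfrac{n-\Delta}{2}\}$ we are done. The obstruction is a single \emph{dominant} piece carrying almost all of $C^\ast-x_0$, which I would attack by recursing into that piece (peeling off successive maximum-degree vertices), together with a separate treatment of the regime $\Delta\ge\tfrac23 n$: there one instead grows $S_1$ outward from $x_0$ by moving a carefully chosen set of its neighbours, so as to internalize $\ge\tfrac{n-\Delta}{2}$ of $x_0$'s edges while damaging $e(S_2)$ by at most the same amount. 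This last step is a knapsack-type selection that is feasible precisely because $x_0$ has maximum degree.

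The main obstacle is thus controlling the cut --- equivalently, ruling out the nested ``dominant piece'' configuration --- in the giant-component case, and the bound there is genuinely sharp. Indeed, the $2$-coloured grid $G_2(3,4)$ of Example~\ref{ex:grid} dualizes to $K_{3,4}$ with uniform edge multiplicities, for which $\nu_2=n/6$ and $\Delta=n/3<\tfrac23 n$, so the desired conclusion $\Delta\ge n-2\nu_2=\tfrac23 n$ fails by a factor of two exactly at $\nu_2=n/6$. Any correct argument must therefore use the strict inequality $\nu_2<n/6$ in an essential, non-continuous way, and it is precisely this strictness that forces the appearance of a vertex of degree at least $n-2\nu_2$.
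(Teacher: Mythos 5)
Your reformulation of $\nu_2(H)$ as $\max_{V(H)=S_1\sqcup S_2}\min\{e(S_1),e(S_2)\}$ is correct for bipartite multigraphs, and your inequality $(\star)$ is logically equivalent to the theorem, so the reduction loses nothing --- but it also gains nothing: all of the difficulty remains in proving $(\star)$, and of that you only actually prove the spread-out case $m_1\le 5n/6$. The giant-component case, which contains every sharp example, has a genuine gap, in three respects. (a) The claim that the only obstruction to two-grouping the pieces of $H-x_0$ is ``a single dominant piece carrying almost all of $C^\ast-x_0$'' is false: for $n/2<\Delta<2n/3$ the target interval $[n/6,\,(n-\Delta)-n/6]$ has length less than $n/6$, so subset sums of the pieces can miss it even when every piece is small. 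For instance, with $\Delta=0.6n$ and three pieces of sizes $0.13n,0.13n,0.14n$ the subset sums are $\{0,0.13,0.14,0.26,0.27,0.4\}\cdot n$, none of which lies in $[n/6,\,7n/30]$; this configuration is rescued only by looking at how $x_0$'s edges are distributed among the pieces, which your two-grouping (whose sides are guaranteed only to ``span at least the two group totals'') never does. (b) The recursion ``peeling off successive maximum-degree vertices'' has no invariant, no termination argument, and no mechanism for outputting a partition: on the sharp example itself ($K_{3,4}$ with all multiplicities $n/12$, so $\Delta=n/3$), $H-x_0$ is connected, the recursion peels $u_1,u_2,u_3$ in turn and ends on an edgeless graph having produced nothing, while a witness such as $S_1=\{u_1,v_1,v_2\}$, $S_2=\{u_2,u_3,v_3,v_4\}$ still has to be exhibited. (c) The knapsack feasibility is asserted, not proved. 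It does in fact hold when $\Delta\ge 2n/3$: writing $a_v$ for the multiplicity of $x_0v$ and $b_v=d(v)-a_v$, order $N(x_0)$ by $b_v/a_v$ and take the minimal prefix $P$ with $a(P)\ge(n-\Delta)/2$; if $a(P)\le \Delta/2$ then $b(P)\le\frac{n-\Delta}{\Delta}a(P)\le\frac{n-\Delta}{2}$, and otherwise the last element $v_j$ satisfies $a_{v_j}>\Delta-\frac{n}{2}\ge\frac{n-\Delta}{2}$, so either $b_{v_j}\le\frac{n-\Delta}{2}$ and $\{v_j\}$ works, or $b_{v_j}>\frac{n-\Delta}{2}$ and then $a_{v_j}\le\Delta-b_{v_j}$ (maximum degree) shows $N(x_0)\setminus\{v_j\}$ works. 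But this is the easy regime; for $\Delta<2n/3$, where the bound $n/6$ is tight, witnesses need not have the form $\{x_0\}\cup N'$ at all (nor come from whole pieces of $H-x_0$), and there your sketch offers no argument.

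For contrast, the paper's proof never passes to vertex partitions and is completely different: either some edge $u_1u_2$ has multiplicity at least $\nu+1$, in which case its parallel class forces all but $\nu$ edges to meet $\{u_1,u_2\}$ and then all but $2\nu$ of these to meet one of $u_1,u_2$; or every multiplicity is at most $\nu$, in which case at most one side can contain a vertex of degree exceeding $2\nu$, and a minimal set $V_2'\subseteq V_2$ with $e(V_2',V_1)\ge 2\nu+1$ (hence $\le 4\nu$) splits $E(H)$ into two large parts meeting disjoint subsets of $V_2$, to which the pigeonhole Lemma~\ref{ballbin_precise} applies inside $V_1$: its alternative \ref{b2} is forbidden by $\nu_2(H)\le\nu$, and its alternative \ref{b1} produces a vertex of degree at least $n-2\nu$. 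If you want to complete your route, the case you must actually resolve is precisely: $H$ connected, $\Delta(H)<2n/3$, and the pieces of $H-x_0$ not groupable even after accounting for the $x_0$-edges they receive; at present nothing in your proposal addresses it.
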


\begin{theorem}[Dual of Theorem \ref{thm:r3nu2}(i)]\label{dual:r3nu2}
Let $H$ be an $3$-partite $3$-uniform multi-hypergraph with $n$ edges.  If $\nu_{2}(H)\leq \frac{n}{3^9}$, then $\Delta(H)\geq \frac{n}{2}-2\nu_2(H)$.
\end{theorem}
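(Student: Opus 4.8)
The plan is to prove the statement directly in the dual language and argue by contraposition. Write $s = \nu_2(H) \le n/3^9$ and suppose, for contradiction, that $\Delta(H) < \tfrac n2 - 2s$. I will produce two disjoint edge-multisets $E_1, E_2$ of size $s+1$ that are \emph{not} cross-intersecting; since such a pair witnesses $\nu_2(H) \ge s+1$, this contradicts $\nu_2(H) = s$ and forces $\Delta(H) \ge \tfrac n2 - 2s$. It is convenient to view each of the $n$ edges of $H$ as a point of $U_1 \times U_2 \times U_3$, where $U_1, U_2, U_3$ are the three vertex classes; the degree of a vertex $u \in U_i$ is just the multiplicity of the value $u$ in the $i$-th coordinate, so our standing assumption reads: every coordinate value occurs fewer than $\tfrac n2 - 2s$ times, and in particular fewer than $\tfrac n2$ times. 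Two edge-multisets fail to be cross-intersecting precisely when they use disjoint sets of values in every one of the three coordinates.

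Hence it suffices to find a $\{0,1\}$-colouring of the values in each $U_i$ for which the class $E_0$ of edges coloured $0$ in all three coordinates and the class $E_1$ of edges coloured $1$ in all three coordinates both have more than $s$ edges: $E_0$ and $E_1$ are automatically disjoint and use disjoint value-sets in every coordinate, so after trimming to size $s+1$ they are the desired pair. The heart of the matter is therefore a separation lemma: given a multiset of $n$ points in $U_1\times U_2\times U_3$ in which no coordinate value has multiplicity $\ge \tfrac n2$, there is a $2$-colouring of the coordinate values making both the all-$0$ and the all-$1$ classes of size $> s$ whenever $s \le n/3^9$. The hypothesis of small multiplicities is exactly what is needed here, and is sharp: a single value of multiplicity $\ge \tfrac n2$ would, once coloured, push at least half of the edges to one side in its coordinate and can wipe out the opposite class entirely. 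This is the mechanism behind the extremal configurations of Theorem \ref{thm:Gy} and Example \ref{ex:grid} (the blow-up of an affine plane of order $2$), where every coordinate value has multiplicity exactly $\tfrac n2$, $\nu_2 = 0$, and no nontrivial separation exists; the slack $2s$ is precisely the room we keep away from this boundary.

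To build the colouring I would proceed one coordinate at a time. First I would split the values of $U_1$ into two groups carrying roughly equal edge-mass---possible because every value has multiplicity $< \tfrac n2$, so a greedy split leaves imbalance at most one value---obtaining disjoint edge sets $A_0, A_1$, each of size $\ge \tfrac n4$, that already use disjoint $U_1$-values. I would then upgrade this to genuine disjointness in the remaining coordinates by feeding the bipartite structure on $(U_2, U_3)$ induced on $A_0 \cup A_1$ into the already-established bipartite result, Theorem \ref{dual:r2nu2}, and iterating; each invocation separates one further coordinate at the cost of a constant factor governed by the threshold $\tfrac16$ of that theorem, and the three nested applications are what produce the constant $\tfrac{1}{3^9}$ (which I make no attempt to optimise).

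The main obstacle is exactly this simultaneous separation. A uniformly random colouring makes each of $E_0, E_1$ have expected size $\tfrac n8$, but the two events are badly anti-correlated---colouring a heavy value helps one class and destroys the other---so a random colouring regularly leaves one class empty (as the star and affine-plane examples show). The hard part will be forcing both classes to be large at once while keeping careful track of how much mass survives each of the three coordinate-separations; this is the delicate, quantitatively lossy step. Once the separation lemma is in hand with any constant $c$ satisfying $s \le cn$, trimming $E_0$ and $E_1$ to size $s+1$ completes the contradiction and proves $\Delta(H)\ge \tfrac n2 - 2\nu_2(H)$.
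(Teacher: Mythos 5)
Your reduction to a ``separation lemma'' is where the argument breaks: that lemma is false as you have stated it, because you discard the slack $2s$ from the degree hypothesis. Keeping only ``every coordinate value has multiplicity $<n/2$'' is not enough. Concretely, let $G$ be $K_{n-a}$ plus $a$ isolated vertices, $3$-colored via the affine plane of order $2$ (the coloring behind Theorem \ref{thm:Gy}), and let $H$ be the dual $3$-partite multi-hypergraph produced by Observation \ref{dual:r}: $H$ consists of four pairwise-intersecting ``heavy'' edges, each of multiplicity $(n-a)/4$, together with $a$ singleton edges disjoint from everything else. Every vertex of $H$ then has degree at most $(n-a)/2<n/2$, yet $\nu_2(H)=a$, since any two heavy edges meet; taking $a=1$, your lemma (applied with $s=\lfloor n/3^9\rfloor$) would force $\nu_2(H)\geq s+1\geq 2$, a contradiction. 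The point is that near the $n/2$ threshold the trade-off between $\Delta(H)$ and $\nu_2(H)$ is linear --- that trade-off \emph{is} the content of the theorem --- so any proof that only ever invokes $\Delta(H)<n/2$, as your greedy split and your invocations of Theorem \ref{dual:r2nu2} do, cannot succeed. The $-2s$ (or at least $-O(s)$) must enter the argument in an essential way, and your sketch gives no place where it does, even though you correctly identify it as ``the room we keep away from the boundary.''

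The same example shows concretely where your iteration gets stuck. After the greedy split of the $U_1$-values into $A_0,A_1$, applying the contrapositive of Theorem \ref{dual:r2nu2} to the $(U_2,U_3)$-projection does produce two large edge sets $F_1,F_2$ with disjoint $U_2$- and $U_3$-values; but nothing forces them to split across $A_0$ and $A_1$. In the example, $F_1$ and $F_2$ are the edges arising from $B_1$ and from $B_2$ (two cells of the affine-plane partition), and both have the \emph{same} $U_1$-value, namely the color-$1$ component $B_1\cup B_2$; every edge of $F_1$ meets every edge of $F_2$ there, and no trimming can separate them in the first coordinate. Escaping this obstruction --- a single vertex or sub-edge of huge degree through which both families pass --- is precisely what the paper's machinery does (Lemma \ref{decentdegree} to find a vertex of degree $\geq 3^6\nu+1$, then Lemma \ref{lem:fatedge} to upgrade this to either the desired degree or an edge of multiplicity $\geq\nu+1$, then Proposition \ref{fatedgetodegree3} to convert such a fat edge into a vertex of degree $\geq\frac n2-2\nu$), and it is exactly the part you have labelled ``the delicate, quantitatively lossy step'' and left unproved. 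As a minor additional point, your constant accounting does not cohere either: three nested applications of a threshold-$\frac16$ result would give a constant like $6^{-3}$, not $3^{-9}$; the paper's $3^{-9}$ comes from Lemma \ref{decentdegree} with $\Delta=3^6$ and $r=3$.
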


\begin{theorem}[Dual of Theorem \ref{thm:rrnur}(i)]\label{dual:rrnur}
Let $r\geq 3$ and let $H$ be an $r$-partite $r$-uniform hypergraph with $n$ edges.  If $\nu_r(H)\leq \frac{n}{3^{\binom{r+1}{2}+r}}$, then $\Delta(H)\geq n-(r-1)\nu_r(H)$.
\end{theorem}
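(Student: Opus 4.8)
The plan is to argue in the dual and prove the contrapositive: assuming $\Delta(H) < n-(r-1)\nu_r(H)$, I construct a witness showing $\nu_r(H)$ exceeds itself, a contradiction. Write $\nu = \nu_r(H)$ and record each edge $e$ as a transversal $(e(1),\dots,e(r))$, where $e(i)$ is its vertex in part $V^i$. Then $r$ edges have a common vertex if and only if they agree in some coordinate, so for edge-multisets $E_1,\dots,E_r$ with coordinate supports $\mathrm{supp}_i(E_j)=\{e(i):e\in E_j\}$, the family is \emph{not} cross-intersecting exactly when $\bigcap_{j=1}^r \mathrm{supp}_i(E_j)=\emptyset$ for every $i\in[r]$. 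Thus it suffices to build pairwise disjoint $E_1,\dots,E_r$, each of size $>\nu$, whose coordinate supports have empty common intersection in each of the $r$ coordinates.

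I would proceed by induction on $r$, taking the small cases (for $r=2$, Theorem \ref{dual:r2nu2}, which even gives the stronger $n-2\nu_2$) as the base. For the inductive step, delete coordinate $r$ from every edge to obtain an $(r-1)$-partite $(r-1)$-uniform multihypergraph $H'$ on the same $n$ edges. Degrees in parts $1,\dots,r-1$ are unchanged, so $\Delta(H')\le \Delta(H) < n-(r-1)\nu$. If $\nu_{r-1}(H')\le\nu$, then $\nu_{r-1}(H')$ lies below the level-$(r-1)$ threshold, so the inductive hypothesis gives $\Delta(H')\ge n-(r-2)\nu_{r-1}(H')\ge n-(r-2)\nu$, contradicting the displayed bound; hence $\nu_{r-1}(H')>\nu$. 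This yields pairwise disjoint edge-sets $E_1,\dots,E_{r-1}$ of size $\nu+1$ whose supports have empty common intersection in coordinates $1,\dots,r-1$.

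It remains to produce an $r$-th disjoint bucket $E_r$ of size $>\nu$ and to kill coordinate $r$, i.e.\ to arrange $\bigcap_{j=1}^{r}\mathrm{supp}_r(E_j)=\emptyset$. Set $W=\bigcap_{j=1}^{r-1}\mathrm{supp}_r(E_j)\subseteq V^r$. If $W=\emptyset$, any $\nu+1$ unused edges serve as $E_r$ and we are done. Otherwise I try to choose $E_r$ from the unused edges whose coordinate-$r$ value avoids $W$; since the stars in part $r$ partition $E(H)$, the number of such edges is $n-\sum_{w\in W}\deg(w)$, minus the $(r-1)(\nu+1)$ already used. If this exceeds $\nu$ we again finish; if not, then $\sum_{w\in W}\deg(w)$ is within $O(r\nu)$ of $n$, which already exposes a part-$r$ vertex of large degree.

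The crux — and where the constant $3^{\binom{r+1}{2}+r}$ is spent — is that this last ``unavoidable $W$'' case only produces a vertex of degree about $n-r\nu$, short of the target $n-(r-1)\nu$ by roughly $\nu$, so it must be handled so as to recover the exact coefficient $r-1$ rather than $r$. The fix is to enforce coordinate-$r$ control \emph{before} invoking the induction: first pass to a large sub-hypergraph on which the heavy part-$r$ vertices are separated (splitting the edge set into a bounded number of pieces and retaining a largest one), so that the buckets returned by the inductive hypothesis are automatically diverse in coordinate $r$ and $W$ can be emptied. Each level of the recursion then prunes the available edges by a bounded power of $3$, matching the exponent increment $\binom{r+1}{2}-\binom{r}{2}=r$ together with the $+1$ from the additive $r$-term, i.e.\ a factor $3^{\,r+1}$ per level above the base exponent $5=\binom{3}{2}+2$, so that the thresholds telescope to $3^{\binom{r+1}{2}+r}$. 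Verifying that the pruned buckets stay pairwise disjoint and of size $>\nu$ through all $r$ levels is the delicate but routine bookkeeping that this outline defers to the formal proof.
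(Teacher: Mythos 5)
Your reduction to coordinate supports is correct (for transversal edges, ``not cross-intersecting'' is exactly ``empty common support intersection in every coordinate''), and the induction skeleton -- truncate coordinate $r$, note $\Delta(H')\le\Delta(H)$, use the level-$(r-1)$ statement to force $\nu_{r-1}(H')>\nu$, extract buckets $E_1,\dots,E_{r-1}$ -- is arithmetically workable (though your parenthetical is backwards: Theorem \ref{dual:r2nu2} gives the \emph{weaker} bound $n-2\nu_2$, not a stronger one; the induction still closes only because the level-$3$ target deficit also happens to be $2$). The genuine gap is the production of $E_r$. Write $W=\bigcap_{j=1}^{r-1}\mathrm{supp}_r(E_j)$. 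When too few unused edges avoid $W$ in coordinate $r$, you claim that $\sum_{w\in W}\deg(w)\ge n-O(r\nu)$ ``exposes a part-$r$ vertex of large degree.'' This is false in general: nothing in the construction of $E_1,\dots,E_{r-1}$ constrains their coordinate-$r$ supports, which may all coincide, so $|W|$ can be as large as $\nu+1$; then the largest degree guaranteed is only about $(n-O(r\nu))/|W|$, which at the threshold $\nu\approx n/3^{\binom{r+1}{2}+r}$ is a bounded constant. Even in the favorable case $|W|=1$ you reach only $n-r\nu-O(r)$, short of the target, as you yourself note.

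The proposed repair makes things worse rather than better: if you pre-split the edge set into a bounded number of pieces and keep a largest piece $H''$ with $m=|E(H'')|$ edges, the inductive degree alternative for $H''$ reads $\Delta(H'')\ge m-(r-2)\nu$, and to contradict the assumed $\Delta(H)<n-(r-1)\nu$ you need $m\ge n-\nu$. So you may discard at most $\nu$ edges, never a constant fraction; for $m\approx n/C$ the degree alternative cannot be ruled out, you cannot conclude $\nu_{r-1}(H'')>\nu$, and no buckets are produced -- there is nothing for the exponents to ``telescope'' over. This coordinate-control problem is precisely what the paper's argument is engineered to avoid. There, Lemma \ref{decentdegree} and Lemma \ref{lem:fatedge} first yield an \emph{edge $e=\{u_1,\dots,u_r\}$ of multiplicity at least $\nu+1$} (or finish outright with $\Delta(H)\ge n-2\nu$), and Proposition \ref{fatedgetodegree} then sets $F_i$ to be the edges avoiding $u_i$ and uses the fat edge $e$ itself as one of the buckets (e.g.\ $e,\,F_1\cap F_2,\,F_3,\dots,F_r$ witnesses $|F_1\cap F_2|\le\nu$); because every bucket is anchored to the single fixed transversal $e$, the intersection pattern is controlled in \emph{all} $r$ coordinates simultaneously, which is exactly the control your outline lacks for coordinate $r$. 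Without an analogue of that anchor (or a genuinely new idea for emptying $W$), the final bucket cannot be guaranteed, so the proof as proposed does not go through.
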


\begin{theorem}[Dual of Theorem \ref{thm:rrnur-1}(i)]\label{dual:rrnur-1}
Let $r\geq 3$ and let $H$ be an $r$-partite $r$-uniform hypergraph with $n$ edges.  If $\nu_{r-1}(H)\leq \frac{n}{3^{\binom{r+1}{2}+r}}$, then $\Delta(H)\geq \frac{(r-1)n}{r}-\binom{r}{2}\nu_{r-1}(H)$.
\end{theorem}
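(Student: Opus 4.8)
The plan is to prove the statement in the dual form exactly as written, so let $H$ be an $r$-partite $r$-uniform hypergraph with parts $V_1,\dots,V_r$, write each edge as a transversal $e=(e(1),\dots,e(r))$ with $e(j)\in V_j$, and set $\nu:=\nu_{r-1}(H)$. For a \emph{center} $c=(c_1,\dots,c_r)$ with $c_j\in V_j$, and an edge $e$, let $\mathrm{miss}(e,c)=\card{\set{j\in[r]}{e(j)\neq c_j}}$. The elementary identity $\sum_{j\in[r]}d_H(c_j)=\sum_{e}(r-\mathrm{miss}(e,c))$ yields $\Delta(H)\ge \max_j d_H(c_j)\ge n-\tfrac1r\sum_e\mathrm{miss}(e,c)$. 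Since in the extremal example of Theorem~\ref{thm:rrnur-1}(ii) every edge misses exactly one hub, the whole theorem reduces to exhibiting a single center for which $\sum_e\mathrm{miss}(e,c)\le n+r\binom{r}{2}\nu$: plugging this into the inequality gives exactly $\Delta(H)\ge\frac{(r-1)n}{r}-\binom{r}{2}\nu$. In turn this budget is comfortably met as soon as every \emph{pair} of hubs is jointly avoided by only $O(\nu)$ edges, because an edge contributes at most $r$ to $\sum_e\mathrm{miss}$, at most $1$ unless it misses two hubs, and the number of edges missing two hubs is at most $\sum_{i<j}\card{\set{e}{e(i)\neq c_i,\ e(j)\neq c_j}}$.

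The engine for producing the hubs is a link reduction. For $v\in V_1$ let $H^{v}$ be the link of $v$: the $(r-1)$-partite $(r-1)$-uniform hypergraph on $V_2,\dots,V_r$ whose edges are the projections of the edges of $H$ through $v$. First I would prove the key inequality \[\text{if at least } \nu_{r-1}(H)+1 \text{ edges of } H \text{ avoid } v,\ \text{then } \nu_{r-2}(H^{v})\le \nu_{r-1}(H).\] Indeed, given disjoint families $E_2,\dots,E_{r-1}$ in $H^{v}$ admitting no common coordinate in $V_2\cup\dots\cup V_r$, adjoin a family $E_1$ of $\nu_{r-1}(H)+1$ edges avoiding $v$; these $r-1$ families are disjoint, and no transversal can share a coordinate (coordinate $1$ is ruled out since $E_1$ avoids $v$ while the others contain it, and the remaining coordinates because $E_2,\dots,E_{r-1}$ already share none), so the configuration would witness $\nu_{r-1}(H)\ge\nu_{r-1}(H)+1$. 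I then build the center greedily through nested links: take $c_1$ to be a maximum-degree vertex of $V_1$ (if $d_H(c_1)\ge n-\nu$ we are already past the target and done; otherwise more than $\nu$ edges avoid $c_1$, so the key inequality applies), pass to $H^{c_1}$, which satisfies $\nu_{r-2}(H^{c_1})\le\nu$, take $c_2$ of maximum degree in its part there, and iterate. This produces one hub $c_t$ in each part, together with the structural facts $\nu_{r-1-t}(H^{(t)})\le\nu$ along the chain of nested links $H^{(t)}$.

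With the hubs in hand, the bound is assembled via the pairwise double-miss estimate. An edge through $c_1$ has the same miss-count against $c$ as its projection has against $(c_2,\dots,c_r)$ in $H^{c_1}$, so control over these edges is inherited from the link; the only genuinely new contribution comes from the edges avoiding $c_1$, for which being bad means additionally missing some $c_j$ with $j\ge 2$. Thus the crux is to bound, for each pair of hubs, the number of edges avoiding both by $O(\nu)$, which I would again extract from $\nu_{r-1}(H)\le\nu$: because the $c_t$ are nested maximum-degree hubs, a surplus of edges simultaneously avoiding two hubs can be combined with the hub-heavy edges furnished by the nested link structure into $r-1$ pairwise coordinate-disjoint families, contradicting the hypothesis. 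Feeding the resulting estimate into the reduction of the first paragraph delivers the theorem.

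The main obstacle is exactly this pairwise double-miss bound and, more broadly, the propagation of the error term down the recursion. Two features force the threshold $\nu\le n/3^{\binom{r+1}{2}+r}$. First, each forbidden-configuration argument and each invocation of the key inequality requires the current number of surviving edges to exceed $\nu$ by a definite fraction, and since the maximum degree can legitimately be only a $\tfrac{r-1}{r}$ fraction of the edges, these fractions compound multiplicatively over the $\binom{r+1}{2}+r$ stages of the descent. Second, and more delicately, a naive black-box self-induction is \emph{not} available: the $(r-1)$-level statement it would invoke is the $\tfrac{r-2}{r-1}$-bound, which already fails for general $3$-partite instances (there the truth is the $\tfrac12$-bound of Theorem~\ref{dual:r3nu2}), so the nested links one descends into must be shown to inherit enough extra structure from the ambient high-uniformity hypergraph to beat the generic worst case. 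This is precisely why the argument proceeds by directly bounding the double-miss counts for the explicitly constructed center rather than by a clean induction, and it is where essentially all of the technical work (and the large constant) resides; the degenerate branch in which a single vertex already covers all but $\le\nu$ edges is disposed of separately, since there the stronger bound $n-(r-1)\nu$ of Theorem~\ref{dual:rrnur} applies.
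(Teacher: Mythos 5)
Your link inequality is correct, and your observation that a black-box self-induction is unavailable (because the $3$-partite truth is the $n/2$-bound of Theorem \ref{dual:r3nu2}) is accurate --- indeed it shows the hypothesis of the statement should read $r\geq 4$, as in the primal Theorem \ref{thm:rrnur-1}. But the step you yourself flag as the crux --- the $O(\nu)$ bound on the number of edges avoiding two of your greedily constructed hubs --- is not merely unproven; it is \emph{false} for the center your construction produces, and the extremal configuration for this very theorem witnesses this. Consider the dual of the Gy\'arf\'as coloring: hubs $B_i\in V_i$, and for each $i\in[r]$ a class of $n/r$ ``type-$i$'' edges, each consisting of all hubs $B_j$, $j\neq i$, together with a private singleton vertex in $V_i$. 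Here $\nu_{r-1}(H)=0$ (any $r-1$ edges share a hub, since each edge misses only one), and the theorem holds because $d(B_i)=\frac{(r-1)n}{r}$ for every $i$. Your nested-link greedy necessarily selects $c_1=B_1,\dots,c_{r-1}=B_{r-1}$, at which point the deepest link consists exactly of the type-$r$ edges; these all avoid $B_r$ and carry pairwise distinct $V_r$-coordinates, so $c_r$ is forced to be a singleton of degree $1$. For this center all $n/r$ type-$1$ edges avoid both $c_1$ and $c_r$, so the pairwise double-miss bound fails at scale $\Theta(n)$, and $\sum_e \mathrm{miss}(e,c)=2n-\frac{n}{r}-1$, so your budget $n+r\binom{r}{2}\nu$ fails as well; since both intermediate claims are false statements about this center, no combinatorial work can establish them. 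Note also that your structural facts expire before the construction does: the chain $\nu_{r-1-t}(H^{(t)})\leq \nu$ is meaningful only while the link has at least $3$ parts (``$\nu_1$'' says nothing), so the last two hubs are chosen with no support from the key inequality --- and that is exactly where the example breaks you: the correct final hub $B_r$ has degree $0$ in your deepest link and is invisible to the recursion.

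The device you are missing is precisely what the paper spends most of its effort manufacturing: an edge of multiplicity at least $\nu+1$ (Lemma \ref{decentdegree} followed by Lemma \ref{lem:fatedge}). The $\nu+1$ copies of such a fat edge $e=\{u_1,\dots,u_r\}$ form a family that confines every potential cross-intersection to the vertices $u_1,\dots,u_r$ themselves; this localization is what makes miss-control claims such as Claims \ref{clm:3way}--\ref{clm:1way2way} provable, whereas your hubs need not form an edge at all, so collections of hub-heavy edges always meet in the hubs they share and can collide accidentally in their free coordinates. Moreover, even with the fat edge in hand, the paper does not find the winning vertex along a nested chain through it: in Proposition \ref{fatedgetodegreer} the high-degree vertex is located in $V_r$ by applying Lemma \ref{ballbin_precise} to the single-miss families $(F_i\cap F_r)^*$, i.e., among edges that \emph{avoid} part of the fat edge --- exactly the mechanism needed to see $B_r$ in the example above. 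Any repair of your plan needs such a mechanism for choosing the last hub(s) outside the nested links; as written, the proposal cannot be completed.
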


\subsection{General lemmas}

In this section we collect a number of general lemmas.  We begin with an elementary lemma that will be used throughout the proofs in this section.

\begin{lemma}\label{ballbin_precise}
Let $\nu, \ell, a_1, \dots, a_\ell$ be positive integers.  Let $H$ be an $r$-partite $r$-uniform multi-hypergraph with parts $V_1,\ldots, V_r$ and let $F_1, \dots, F_\ell \subseteq E(H)$ such that $|F_j|\geq 2a_j\nu+1$ for all $j\in [2,\ell]$ and $|F_1|\geq 3a_1\nu+1$. For all $i\in [r]$, either
\begin{enumerate}[label=\emph{(B\arabic*)}, ref=(B\arabic*)]
\item\label{b1} there exists $u\in V_i$ such that $u$ is incident with 
at least $|F_j|-a_j\nu$ edges of $F_j$ for all $j\in [\ell]$, or 

\item\label{b2} there exists a subset $F_1'\subseteq F_1$ with $|F_1'|\geq a_1\nu+1$ and a subset $F_j'\subseteq F_j$ for some $j\in [2,\ell]$ with $|F_j'|\geq a_j\nu+1$ such that $F_1',F_2 \dots, F_{j-1}, F_j', F_{j+1}, \dots, F_{\ell}$ is not cross-intersecting in $V_i$.
\end{enumerate}
\end{lemma}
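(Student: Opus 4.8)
The plan is to fix the part index $i\in[r]$ and recast everything in terms of how the edges of each $F_j$ project onto $V_i$. Since $H$ is $r$-partite and $r$-uniform, every edge $e$ meets $V_i$ in exactly one vertex, which I will write as $x(e)$. For an edge set $F$ and $u\in V_i$ put $d_F(u)=|\{e\in F: x(e)=u\}|$, and for $S\subseteq V_i$ put $F[S]=\{e\in F: x(e)\in S\}$, so $|F[S]|=\sum_{u\in S}d_F(u)$. The central reformulation is that a collection of edge sets is cross-intersecting in $V_i$ exactly when their projections share a common vertex of $V_i$ (a common vertex of the chosen edges inside $V_i$ is nothing but a common value of $x(\cdot)$). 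In particular, if two edge sets $F'$ and $F''$ satisfy $x(F')\subseteq S$ and $x(F'')\subseteq V_i\setminus S$ for some $S$, then \emph{any} collection containing both $F'$ and $F''$ fails to be cross-intersecting in $V_i$, since a common $V_i$-vertex would lie in $S\cap(V_i\setminus S)=\emptyset$. I will call this the \emph{disjoint-projection principle}; it is the tool I would use to certify (B2) in every case. Throughout I assume $\ell\ge 2$, which is the range in which (B2) is meaningful.

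Let $w\in V_i$ maximize $d_{F_1}(\cdot)$, and I would split according to how dominant $w$ is in $F_1$. In the first case $d_{F_1}(w)\ge |F_1|-a_1\nu$. If moreover $d_{F_j}(w)\ge|F_j|-a_j\nu$ for every $j\in[\ell]$, then $u=w$ witnesses (B1) directly. Otherwise some $j\in[2,\ell]$ has $d_{F_j}(w)<|F_j|-a_j\nu$ (the index $1$ cannot fail, by the case hypothesis), and I would take $F_1'=F_1[\{w\}]$ and $F_j'=F_j[V_i\setminus\{w\}]$. Then $|F_1'|=d_{F_1}(w)\ge|F_1|-a_1\nu\ge a_1\nu+1$ and $|F_j'|=|F_j|-d_{F_j}(w)>a_j\nu$, while $x(F_1')\subseteq\{w\}$ and $x(F_j')\subseteq V_i\setminus\{w\}$; the disjoint-projection principle then delivers (B2).

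The substantive case is $d_{F_1}(w)<|F_1|-a_1\nu$, i.e. no vertex carries more than $|F_1|-a_1\nu-1$ edges of $F_1$. Here I would first produce a \emph{balanced} bipartition $V_i=S\sqcup S'$ with $|F_1[S]|\ge a_1\nu+1$ and $|F_1[S']|\ge a_1\nu+1$ simultaneously, by adding vertices to $S$ in decreasing order of $d_{F_1}$ and stopping the first time $|F_1[S]|$ exceeds $a_1\nu$. If the first vertex already crosses the threshold, then $|F_1[S]|=\max_u d_{F_1}(u)\le|F_1|-a_1\nu-1$; otherwise every vertex has $d_{F_1}\le a_1\nu$, so the overshoot is at most $a_1\nu$ and $|F_1[S]|\le 2a_1\nu\le|F_1|-a_1\nu-1$. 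Either way $|F_1[S]|$ lands in the window $[a_1\nu+1,\,|F_1|-a_1\nu-1]$, forcing both $|F_1[S]|$ and $|F_1[S']|$ to exceed $a_1\nu$; this is exactly where the stronger hypothesis $|F_1|\ge 3a_1\nu+1$ is consumed. Now fix any $j\in[2,\ell]$. Since $|F_j[S]|+|F_j[S']|=|F_j|\ge 2a_j\nu+1$, at least one side carries more than $a_j\nu$ edges of $F_j$; say $|F_j[S']|\ge a_j\nu+1$ (otherwise swap the roles of $S$ and $S'$, which is legitimate because both $F_1$-sides are large). Taking $F_1'=F_1[S]$ and $F_j'=F_j[S']$ gives disjoint projections, so the disjoint-projection principle yields (B2).

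The main obstacle is the balanced split in this last case: one needs $F_1$ to break into two pieces each individually large enough to play the role of $F_1'$, and the greedy stopping argument succeeds precisely because $|F_1|\ge 3a_1\nu+1$ (two pieces of size $>a_1\nu$, plus the slack of up to $a_1\nu$ that the last added vertex may contribute). The complementary weaker bound $|F_j|\ge 2a_j\nu+1$ for $j\ge 2$ is what makes the subsequent pigeonhole step work, placing more than $a_j\nu$ edges of $F_j$ on one side of the already-fixed bipartition. Once these two calibrations are in hand, the disjoint-projection principle renders the verification of (B2) immediate and no further case analysis is needed.
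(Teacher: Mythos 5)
Your proof is correct and takes essentially the same route as the paper's: the same opening case split on whether some vertex of $V_i$ carries at least $|F_1|-a_1\nu$ edges of $F_1$, and, in the remaining case, a bipartition of $V_i$ into two sides each incident with at least $a_1\nu+1$ edges of $F_1$ followed by a pigeonhole step on $F_j$. Your greedy ordering plays the role of the paper's minimal-set argument, and your ``disjoint-projection principle'' is exactly the paper's implicit certification that the modified collection is not cross-intersecting in $V_i$.
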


\begin{proof}
Let $i\in [r]$.  If there exists $u\in V_i$ such that $u$ is incident with at least $|F_1|-a_1\nu$ edges from $F_1$, then either at least $a_j\nu+1$ edges of $F_j$ intersect $V_{i}-u$ for some $j\in [2,\ell]$ and \ref{b2} is satisfied, or $V_i-u$ is incident with at most $a_j\nu$ edges of $F_j$ for all $j\in [2,\ell]$ and thus $u$ is incident with at least $|F_j|-a_j\nu$ edges of $F_j$ for all $j\in [\ell]$ and \ref{b1} is satisfied.

So suppose that every $u\in V_i$ is incident with at most $|F_1|-a_1\nu-1$ edges of $F_1$.  Let $V_i^1\subseteq V_i$ be a minimal set of vertices incident with at least $a_1\nu+1$ edges of $F_1$.  By minimality, and the fact that every $u\in V_i$ is incident with at most $|F_1|-a_1\nu-1$ edges of $F_1$, we have that both $V_i^1$ and $V_i^2:=V_i\setminus V_i^1$ are incident with at least $a_1\nu+1$ edges of $F_1$.  Now either $V_i^1$ or $V_i^2$ is incident with at least $|F_2|-a_2\nu\geq a_2\nu+1$ edges of $F_2$, and either way \ref{b2} is satisfied. 
\end{proof}

A simpler version of the above lemma which suffices whenever we don't care about the exact bounds is as follows.

\begin{lemma}\label{ballbin}
Let $\nu, \ell, a_1, \dots, a_{\ell}$ be positive integers.  Let $H$ be an $r$-partite $r$-uniform multi-hypergraph with parts $V_1,\dots, V_r$, and let $F_1, \dots, F_\ell \subseteq E(H)$ such that $|F_j|\geq 3a_j\nu+1$ for all $j\in [\ell]$. 
For all $i\in [r]$, either
\begin{enumerate}[label=\emph{(B\arabic*$^\prime$)}, ref=(B\arabic*$^\prime$)]
\item\label{b1'} there is a vertex in $V_i$ which is incident with at least $|F_j|-a_j\nu$ edges of $F_j$ for all $j\in [\ell]$, or
\item\label{b2'} there exists $F_j'\subseteq F_j$ with $|F_j'|\geq a_j\nu+1$ for all $j\in [\ell]$ such that $F_1', \dots, F_\ell'$ is not cross-intersecting in $V_i$.
\end{enumerate}
\end{lemma}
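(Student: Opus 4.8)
The plan is to derive Lemma~\ref{ballbin} from the sharper Lemma~\ref{ballbin_precise}, the only real work being to upgrade conclusion \ref{b2} (which shrinks only two of the families) to conclusion \ref{b2'} (which shrinks all $\ell$ of them). First I would fix $i\in[r]$ and check that the hypotheses of Lemma~\ref{ballbin_precise} are satisfied: since $|F_j|\ge 3a_j\nu+1$ for every $j$, we certainly have $|F_1|\ge 3a_1\nu+1$ and $|F_j|\ge 2a_j\nu+1$ for all $j\in[2,\ell]$, so Lemma~\ref{ballbin_precise} applies verbatim to the family $F_1,\dots,F_\ell$ at the part $V_i$.

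Now I would split on the two outcomes. If \ref{b1} holds, then some $u\in V_i$ is incident with at least $|F_j|-a_j\nu$ edges of $F_j$ for all $j$, which is exactly \ref{b1'}, and there is nothing more to do. If instead \ref{b2} holds, I obtain $F_1'\subseteq F_1$ with $|F_1'|\ge a_1\nu+1$ and $F_j'\subseteq F_j$ for some $j\in[2,\ell]$ with $|F_j'|\ge a_j\nu+1$ such that the family $F_1',F_2,\dots,F_{j-1},F_j',F_{j+1},\dots,F_\ell$ is not cross-intersecting in $V_i$. To produce the subsets required by \ref{b2'} for the remaining indices, I simply pad: for each $m\in[\ell]\setminus\{1,j\}$ I pick an arbitrary $F_m'\subseteq F_m$ with $|F_m'|=a_m\nu+1$, which exists because $|F_m|\ge 3a_m\nu+1\ge a_m\nu+1$.

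The key (and essentially only) point is a monotonicity observation: being cross-intersecting in $V_i$ is preserved under enlarging the families, so being \emph{not} cross-intersecting in $V_i$ is inherited by subfamilies. Concretely, because $H$ is $r$-partite and $r$-uniform every edge meets $V_i$ in a single vertex, so a family is cross-intersecting in $V_i$ exactly when some vertex of $V_i$ carries an edge from each member; shrinking members can only destroy such a common vertex, never create one. Since $F_1',F_2',\dots,F_\ell'$ is entrywise contained in the family $F_1',F_2,\dots,F_j',\dots,F_\ell$ furnished by \ref{b2}, it is again not cross-intersecting in $V_i$, and every member has size at least $a_m\nu+1$; this is precisely \ref{b2'}.

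I do not expect any genuine obstacle here: the content lives entirely in Lemma~\ref{ballbin_precise}, and the reduction is bookkeeping plus the one-line monotonicity remark. The only place to be careful is to state the cross-intersecting semantics correctly (existence of a common vertex of $V_i$) so that the inheritance under subfamilies is transparent. As an alternative, one could reprove the statement directly by the same minimal-set splitting argument used in Lemma~\ref{ballbin_precise} (partition $V_i$ into two vertex sets each supporting $\ge a_{j_0}\nu+1$ edges of some offending $F_{j_0}$, then pad the rest), but routing through the precise lemma is shorter and avoids repeating that calculation.
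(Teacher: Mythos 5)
Your proposal is correct and matches the paper's approach: the paper gives no separate proof of Lemma~\ref{ballbin}, presenting it merely as a ``simpler version'' of Lemma~\ref{ballbin_precise}, and your reduction---apply Lemma~\ref{ballbin_precise}, observe that \ref{b1} is literally \ref{b1'}, and upgrade \ref{b2} to \ref{b2'} by choosing arbitrary subsets $F_m'\subseteq F_m$ with $|F_m'|=a_m\nu+1$ for the untouched indices and using that failure of cross-intersection in $V_i$ is inherited by entrywise subfamilies---is exactly the intended derivation. The only cosmetic remark is that the inheritance step needs no appeal to $r$-partiteness: non-cross-intersection is a universal statement over all choices $e_1\in E_1,\dots,e_\ell\in E_\ell$, so it is trivially preserved when each family is shrunk.
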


\begin{observation}\label{obs:s<t}
Let $2\leq s< t\leq r$ and let $H$ be an $r$-partite $r$-uniform multi-hypergraph on $n$ edges.  If $\nu_t(H)\leq \frac{n}{t}-1$, then $\nu_s(H)\leq \nu_t(H)$.
\end{observation}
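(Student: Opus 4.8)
The plan is to prove the auxiliary inequality $\nu_t(H)\ge \min\{\nu_s(H),\floor{n/t}\}$ unconditionally, and then to invoke the hypothesis $\nu_t(H)\le n/t-1$ only to eliminate the case where the minimum is $\floor{n/t}$. Everything rests on two monotonicity properties of the non-cross-intersecting condition. First, passing to subsets preserves it: if $E_1,\dots,E_s$ is not cross-intersecting and $E_i'\subseteq E_i$, then $E_1',\dots,E_s'$ is not cross-intersecting, since it allows strictly fewer transversals. Second, appending families preserves it: if $E_1,\dots,E_s$ is not cross-intersecting and $E_{s+1},\dots,E_t$ are arbitrary additional (disjoint) families, then $E_1,\dots,E_t$ is not cross-intersecting, because any transversal with $e_1\cap\dots\cap e_t\neq\emptyset$ would satisfy $e_1\cap\dots\cap e_s\supseteq e_1\cap\dots\cap e_t\neq\emptyset$, contradicting the assumption on $E_1,\dots,E_s$.

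First I would set $m=\nu_s(H)$ and fix pairwise disjoint multisets $E_1,\dots,E_s\subseteq E(H)$, each of size $m$, witnessing $\nu_s(H)$. Shrinking each to a common size $m'=\min\{m,\floor{n/t}\}$ keeps them disjoint, equal-sized, and not cross-intersecting by the first property above. The quantitative heart is the edge budget: these $s$ families use $sm'$ of the $n$ edges, and since $m'\le \floor{n/t}\le n/t$ we have $tm'\le n$, so at least $(t-s)m'$ edges remain unused. Because all edges are treated as distinguishable throughout this section, I can carve the leftover edges into $t-s$ further pairwise disjoint multisets $E_{s+1},\dots,E_t$, each of size exactly $m'$. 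By the second property, $E_1,\dots,E_t$ is then a non-cross-intersecting family of $t$ disjoint multisets of common size $m'$, which yields $\nu_t(H)\ge m'=\min\{\nu_s(H),\floor{n/t}\}$.

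It then remains to combine this with the hypothesis. If $\nu_s(H)\le \floor{n/t}$, the bound reads $\nu_t(H)\ge \nu_s(H)$ and we are finished. Otherwise $m'=\floor{n/t}$, so $\nu_t(H)\ge \floor{n/t}$; since $\floor{n/t}>n/t-1$, this gives $\nu_t(H)>n/t-1$, contradicting the assumption $\nu_t(H)\le n/t-1$. Hence this second case cannot occur, and the first case delivers the claim. I would also note that the hypothesis forces $0\le \nu_t(H)\le n/t-1$, hence $n\ge t$ and $\floor{n/t}\ge 1$, so the shrinking step is legitimate; the degenerate case $\nu_s(H)=0$ is trivial.

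I do not anticipate a genuine obstacle: this is essentially a counting-plus-monotonicity argument. The two points worth stating carefully are the appending property (enlarging the number of families can only destroy cross-intersection), and the observation that the strict inequality $\floor{n/t}>n/t-1$ is precisely what makes the hypothesis bite. The distinguishability convention adopted in Section~\ref{sec:duality} is what allows the leftover edges to be repartitioned freely into the new families $E_{s+1},\dots,E_t$.
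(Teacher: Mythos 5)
Your proof is correct and is essentially the paper's argument: both shrink the witness families for $\nu_s(H)$ to a common size small enough that the leftover edges (at least $(t-s)$ times that size, thanks to the hypothesis $\nu_t(H)\leq \frac{n}{t}-1$) can be carved into $t-s$ further pairwise disjoint families of the same size, producing a non-cross-intersecting witness for $\nu_t(H)$. The only difference is packaging: you prove the unconditional bound $\nu_t(H)\geq \min\{\nu_s(H),\lfloor n/t\rfloor\}$ and then invoke the hypothesis, whereas the paper argues by contradiction with families of size exactly $\nu_t(H)+1$ --- a purely cosmetic distinction (your version even handles the equal-size requirement in the definition of $\nu_t$ slightly more carefully).
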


\begin{proof}
Suppose $\nu_t(H)\leq \frac{n}{t}-1$ and suppose for contradiction that $\nu_s(H)>\nu_t(H)$.  So there exists disjoint sets $E_1, \dots, E_s$ with $|E_i|= \nu_t(H)+1$ for all $i\in [s]$ such that $E_1, \dots, E_s$ is not cross-intersecting.  Since $\nu_t(H)\leq \frac{n}{t}-1$, we have $|E_i|\leq \frac{n}{t}$ for all $i\in [s]$ and thus $|E(H)\setminus (E_1\cup \dots \cup E_s)|\geq n-s\frac{n}{t}= (t-s)\frac{n}{t}$ and thus there is a partition of $E(H)\setminus (E_1\cup \dots \cup E_s)$ into $t-s$ sets $E_{s+1}, \dots, E_t$ each of order greater than $\nu_t(H)$ such that $E_1, \dots, E_s, E_{s+1}, \dots, E_t$ is not cross-intersecting.  
\end{proof}

We now show that if $H$ is an $r$-partite $r$-uniform multi-hypergraph on $n$ edges with $\nu_2(H)$ small enough in terms of $n$ and $r$, then there must be a vertex of fairly large degree.  Note that by Observation \ref{obs:s<t}, if we have a bound on $\nu_s(H)$ for $3\leq s\leq r$, this gives us a bound on $\nu_2(H)$ and that is why the following lemma (which only really uses a bound on $\nu_2(H)$) applies in all cases.

\begin{lemma}\label{decentdegree}
Let $\Delta, r, s$ be positive integers with $2\leq s\leq r$ and let $H$ be an $r$-partite $r$-uniform multi-hypergraph with $n$ edges.  If $\nu_s(H)\leq \frac{n}{3^r\Delta}$, then $\Delta(H)\geq  \Delta\cdot \nu_s(H)+1$.
\end{lemma}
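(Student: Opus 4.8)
The plan is to prove the contrapositive: assuming every vertex of $H$ has degree at most $\Delta\cdot\nu_s(H)$, I will manufacture a pair of disjoint edge-families that contradicts the hypothesis $\nu_s(H)\le \frac{n}{3^r\Delta}$. The first move is to reduce to $s=2$. Since $s\le r<3^r\le 3^r\Delta$, the bound $\nu_s(H)\le\frac{n}{3^r\Delta}$ gives $\nu_s(H)\le \frac{n}{s}-1$ (in the only nontrivial range of $n$), so Observation \ref{obs:s<t} applies and yields $\nu_2(H)\le \nu_s(H)$. Writing $\nu:=\nu_2(H)$, it then suffices to show that if $\nu\le\frac{n}{3^r\Delta}$ then some vertex of $H$ has degree at least $\Delta\nu+1$.

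So suppose for contradiction that $\Delta(H)\le \Delta\nu$. Split $E(H)$ into two disjoint families $F_1,F_2$ of sizes $\floor{n/2}$ and $\ceiling{n/2}$, and process the parts $V_1,\dots,V_r$ one at a time. At part $V_i$ I would apply Lemma \ref{ballbin} with $\ell=2$ and a common constant $a$ chosen so that $a\nu\approx |F_1|/3$, which keeps the size hypothesis $|F_j|\ge 3a\nu+1$ satisfied. In case \ref{b1'} there is a vertex of $V_i$ incident with at least $|F_1|-a\nu\approx \tfrac23|F_1|$ edges of $F_1$; as long as the families remain large enough that $\tfrac23|F_1|>\Delta\nu$, this contradicts $\Delta(H)\le\Delta\nu$. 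Hence at every part we must land in case \ref{b2'}, which replaces $F_1,F_2$ by subsets $F_1',F_2'$, each of size at least $a\nu+1\ge \tfrac13|F_j|$, that are not cross-intersecting in $V_i$; I then carry these shrunken families forward to the next part.

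The crux is monotonicity: being not cross-intersecting in a fixed part is preserved under passing to sub-families, so after handling $V_i$ the current pair is not cross-intersecting in $V_1,\dots,V_i$ simultaneously. If case \ref{b2'} occurs at all $r$ parts, the final $F_1,F_2$ are disjoint and not cross-intersecting in every part, hence not cross-intersecting globally (two edges meet iff they share a vertex in some part). Each step costs at most a factor $3$ in size, so the final families have size at least $\tfrac12\cdot 3^{-r}n\ge \tfrac12\Delta\nu$, which exceeds $\nu$ once $\Delta\ge 3$; their existence then contradicts $\nu_2(H)\le\nu$. The main obstacle — and the reason the constant is exactly $3^r$ — is the constant bookkeeping: at each of the $r$ steps the parameter $a$ must be chosen simultaneously large enough that the \ref{b2'} remnant still exceeds $\nu$ after the full sweep, and large enough that the \ref{b1'} alternative forces degree exceeding $\Delta\nu$, all while respecting $|F_j|\ge 3a\nu+1$. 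Keeping these competing inequalities compatible across all $r$ parts, and absorbing the crude factor-$\tfrac12$ loss from the initial split (for which one would invoke the sharper Lemma \ref{ballbin_precise}, whose asymmetric treatment of $F_1$ versus $F_2$ is designed for exactly this), is where the real care lies.
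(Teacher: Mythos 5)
Your proposal has the same skeleton as the paper's proof: assume $\Delta(H)\le\Delta\nu$ for contradiction, pass to $\nu_2$ via Observation \ref{obs:s<t}, sweep the parts while maintaining two disjoint families that are not cross-intersecting in the parts already processed, shrink by a factor of about $3$ per application of Lemma \ref{ballbin}, and finish with either a high-degree vertex or a violation of the maximality of $\nu_2(H)$. The gap is precisely the step you defer to ``constant bookkeeping,'' and it does not close. The paper never halves $E(H)$. It handles $V_1$ by a minimal-set argument: take $V_1'\subseteq V_1$ minimal so that at least $3^{r-1}\Delta\nu+1$ edges meet $V_1'$; by minimality and the assumed degree bound, at most $3^{r-1}\Delta\nu+\Delta\nu$ edges meet $V_1'$, so (using $n\ge 3^r\Delta\nu$) more than $3^{r-1}\Delta\nu$ edges meet $V_1\setminus V_1'$, and these two families are \emph{automatically} not cross-intersecting in $V_1$ since each edge has exactly one vertex in $V_1$. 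Thus $V_1$ costs a single factor of $3$ and no application of Lemma \ref{ballbin}; only $r-1$ applications remain, with parameters $3^{r-2}\Delta,\dots,3^0\Delta$, the surviving families end at size $\Delta\nu+1\ge\nu+1$, and every \ref{b1'} escape yields degree at least $2\Delta\nu+1>\Delta\nu$, for \emph{every} positive integer $\Delta$.

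Your route spends a factor $2$ on the halving plus a factor $3$ at each of $r$ parts, a budget of $2\cdot 3^r$ against a hypothesis supplying only $n\ge 3^r\Delta\nu$, and the numbers cannot be made to work. Concretely: with your prescription $a\nu\approx|F_1|/3$ at every step, after $r-1$ steps the families have size about $\frac{n/2}{3^{r-1}}\approx\frac32\Delta\nu$, so at the last part your own test ``$\frac23|F_1|>\Delta\nu$'' fails and \ref{b1'} yields only degree about $\Delta\nu$ (less, after the integrality losses you suppress), which is no contradiction. This particular failure can be patched by noting the two families are disjoint, so the \ref{b1'} vertex has degree at least $(|F_1|-a\nu)+(|F_2|-a\nu)$; but even then the \ref{b2'} remnant after the full sweep is only about $\Delta\nu/2-O(\nu)$, so the final contradiction with $\nu_2$ needs $\Delta$ at least some constant, whereas the lemma is claimed (and must hold) for all $\Delta\ge1$: for $\Delta\in\{1,2\}$ no choice of the parameters $a^{(1)},\dots,a^{(r)}$ rescues a halving start, since one family must still shrink by a factor $3$ at each of $r$ parts from a base of only $n/2$. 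Your appeal to Lemma \ref{ballbin_precise} does not absorb the lost factor $2$: its relaxation ($2a_j\nu+1$ in place of $3a_j\nu+1$) applies only to the later families, and the first family still pays $3$ per step. If you want to avoid the paper's explicit minimal-set step, the clean repair is to observe that Lemma \ref{ballbin} does not require $F_1,F_2$ to be disjoint, and apply it at $V_1$ with $F_1=F_2=E(H)$ and $a_1=a_2=3^{r-1}\Delta$: any two families that are not cross-intersecting in $V_1$ are automatically disjoint, and this recovers exactly the paper's accounting. Finally, a slip you share with the paper's own write-up: after reducing to $\nu:=\nu_2$, a vertex of degree $\Delta\nu_2+1$ is not yet the claimed $\Delta\nu_s(H)+1$ when $\nu_2<\nu_s$; the thresholds should be run with $\nu_s$, using $\nu_2\le\nu_s$ only for the final non-cross-intersecting contradiction.
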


\begin{proof}
Suppose $\nu_s(H)\leq \frac{n}{3^r\Delta}$ and suppose for contradiction that $\Delta(H)\leq  \Delta \nu_s(H)$.  Note that by Observation \ref{obs:s<t} we have $\nu:=\nu_2\leq \nu_s\leq \frac{n}{3^r\Delta}$.  

Let $V_1, \dots, V_r$ be the parts of $H$.  Let $V_1'\subseteq V_1$ be a minimum set of vertices incident with at least $3^{r-1}\Delta\nu+1$ edges.  By minimality, we have $$3^{r-1}\Delta \nu+1\leq e(V_1')\leq 3^{r-1}\Delta\nu+\Delta\nu$$ and consequently, since $\nu<\frac{n}{3^r\Delta}$, we have $$e(V_1\setminus V_1')= n-e(V_1')\geq n-3^{r-1}\Delta\nu-\Delta\nu> 3^{r-1}\Delta \nu.$$  Let $F_1^1$ and $F_2^1$ be the sets of edges incident with $V_1'$ and $V_1\setminus V_1'$ respectively.  Now we apply Lemma \ref{ballbin} (with $a_1=a_2=3^{r-2}\Delta$ and $i=2$), and since we are assuming $\Delta(H)\leq  \Delta \nu$, \ref{b2'} must happen.  Now we have sets $F_1^2\subseteq F_1^1$ and $F_2^2\subseteq F_2^1$ such that $|F_1^2|, |F_2^2|\geq 3^{r-2}\Delta\nu+1$ and $F_1^2$ and $F_2^2$ 
are not cross-intersecting in
$V_1\cup V_2$.  Now we repeatedly apply Lemma \ref{ballbin} until we have sets $F_1^{r-1}$ and $F_{2}^{r-1}$ with $|F_{1}^{r-1}|, |F_{2}^{r-1}|\geq 3\Delta\nu+1$ and $F_{1}^{r-1}$ and $F_{2}^{r-1}$ 
are not cross-intersecting in
$V_1\cup \dots \cup V_{r-1}$.  In the final step (where we apply Lemma \ref{ballbin} with $a_1=a_2=\Delta$ and $i=r$), either \ref{b2'} happens and we have a contradiction to $\nu_2(H)=\nu$, or \ref{b1'} happens and we have $\Delta(H) \geq \Delta\nu+1$, contradicting the assumption.  
\end{proof}

For the last result in this subsection we show that if there is a vertex of fairly large degree, then either we have an edge of multiplicity at least $\nu+1$ or there is a vertex of even larger degree.  

\begin{lemma}\label{lem:fatedge}
Let $r$ be an integer with $r\geq 3$ and let $s\in \{2,r-1,r\}$.  Let $H$ be an $r$-partite $r$-uniform multi-hypergraph with $n$ edges and set $\nu:=\nu_s(H)$.  If $\Delta(H)\geq  3^{\binom{r+1}{2}}\nu +1$, then either $H$ has an edge of multiplicity at least $\nu+1$ or 
\begin{enumerate}
\item if $s=2$, then $\Delta(H)\geq \frac{n-2\nu}{r-1}$.
\item if $s=r$, then $\Delta(H)\geq n-2\nu$.
\item if $s=r-1\geq 3$, then $\Delta(H)\geq \frac{(r-1)n}{r}-2(r-1)\nu$.
\end{enumerate}
\end{lemma}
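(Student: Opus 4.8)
The plan is to prove the contrapositive form: assuming $H$ has \emph{no} edge of multiplicity at least $\nu+1$, I will produce the stated degree bound (if some edge has multiplicity $\geq\nu+1$ the first alternative already holds). Fix a vertex $u$ with $d_H(u)=\Delta(H)$ and, relabelling parts, assume $u\in V_1$. Let $F$ be the set of edges through $u$, so $|F|=\Delta(H)\geq 3^{\binom{r+1}{2}}\nu+1$, and let $\bar F=E(H)\setminus F$; these are disjoint and, since every edge of $F$ meets $V_1$ in $u$ while no edge of $\bar F$ does, the pair $F,\bar F$ is not cross-intersecting in $V_1$. I would also record the cheap consequence of the hypothesis that $3^{\binom{r+1}{2}}\nu+1\leq\Delta(H)\leq n$, so $\nu$ is small compared with $n$; in particular $\nu_s(H)=\nu\leq \tfrac ns-1$, whence Observation \ref{obs:s<t} gives $\nu_2(H)\leq\nu_s(H)=\nu$. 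This is what lets me use a \emph{two}-family witness uniformly for all three values of $s$.

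The engine is to feed $F,\bar F$ (and, after splitting, their descendants) into Lemma \ref{ballbin_precise}, processing the remaining parts $V_2,\dots,V_r$ one at a time with a geometrically decreasing schedule of parameters $a_i$ whose product is calibrated to the budget $3^{\binom{r+1}{2}}$. At each part the lemma returns outcome \ref{b1} (a single vertex incident with all but $a_i\nu$ edges of \emph{each} current family) or outcome \ref{b2} (a split of two families into non-cross-intersecting pieces), and these drive the two alternatives. For the \emph{degree} alternative, take $s=r$: if $|\bar F|\leq 2\nu$ then $u$ itself has $d_H(u)\geq n-2\nu$; otherwise the first application to $F,\bar F$ with $a_1=a_2=1$, if it returns \ref{b1}, yields a vertex $w$ incident with at least $(|F|-\nu)+(|\bar F|-\nu)=n-2\nu$ edges (the two families are disjoint and cover $E(H)$), giving $\Delta(H)\geq n-2\nu$.

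For the \emph{witness} alternative, a \ref{b2} outcome at $V_i$ refines the two families into pieces that are non-cross-intersecting in $V_i$; together with non-cross-intersection on the previously processed parts this keeps the running pair non-cross-intersecting on $V_1\cup\dots\cup V_i$. Driving \ref{b2} through all of $V_2,\dots,V_r$ produces two disjoint families, each of size $\geq\nu+1$, non-cross-intersecting on all of $V(H)$, i.e.\ a genuine witness $\nu_2(H)\geq\nu+1$, contradicting $\nu_2(H)\leq\nu$. The large constant $3^{\binom{r+1}{2}}$ is exactly what guarantees the families stay above the $3a_i\nu+1$ threshold of Lemma \ref{ballbin} through all of these applications, so the split branch can actually be carried to completion. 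For $s=2$ and $s=r-1$ the skeleton is identical; the only change is that the degree found in the \ref{b1} branch must be matched against the Gy\'arf\'as values $\mc_r(K_n^2)=\tfrac n{r-1}$ and $\mc_r(K_n^{r-1})=\tfrac{(r-1)n}r$ of Theorem \ref{thm:Gy}, which converts ``covers $E(H)$'' into the fractions $\tfrac1{r-1}$ and $\tfrac{r-1}r$ in the bounds $\tfrac{n-2\nu}{r-1}$ and $\tfrac{(r-1)n}r-2(r-1)\nu$, the additive $\Theta(\nu)$ losses coming from the $a_i\nu$ slack per application.

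The main obstacle I anticipate is coordinating the two outcomes so that the \emph{correct} one fires before the families shrink too far. The delicate case is a \ref{b1} outcome arriving \emph{after} one or more splits: the common vertex it produces is only guaranteed incident with the edges of the (already reduced) current families, whose total for $s=r$ need not be close to $n$, so it does not by itself deliver $n-2\nu$. The heart of the argument is therefore to show that once a split has occurred the process can always be pushed to a full non-cross-intersecting witness (hence to a contradiction) rather than stalling at such an intermediate common vertex---equivalently, that the only way to avoid the witness is to meet the decisive common vertex while the families still carry essentially all of $E(H)$. Pinning down the parameter schedule $a_2,\dots,a_r$ and the accompanying size bookkeeping to underwrite this, simultaneously for $s\in\{2,r-1,r\}$ with their three different target fractions, is where the real work lies.
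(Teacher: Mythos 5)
Your proposal contains a genuine gap, and it is precisely the one you flagged in your last paragraph: a \ref{b1} outcome arriving \emph{after} a split is not a technical nuisance that bookkeeping can absorb; it is fatal to the two-family scheme. Note what your skeleton actually uses: the witness branch can only contradict $\nu_2(H)\le \nu$ (since you maintain just two families, descended from $F$ and $\bar F$), and the degree branch only delivers a good bound if \ref{b1} fires at the \emph{first} application, while the families still cover $E(H)$. So if your argument worked, it would prove, uniformly in $s$, that every $H$ with no edge of multiplicity $\ge\nu+1$, $\nu_2(H)\le\nu$, and $\Delta(H)\ge 3^{\binom{r+1}{2}}\nu+1$ satisfies $\Delta(H)\ge n-2\nu$. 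That statement is false. Take $r=3$, $V_1=\{a,z_1,\dots,z_m\}$, $V_2=\{b,y_1,\dots,y_m\}$, $V_3=\{c,x_1,\dots,x_m\}$, and let $H$ consist of the edges $\{a,b,x_i\}$, $\{a,y_j,c\}$, $\{z_k,b,c\}$ for $i,j,k\in[m]$, the edge $\{a,b,c\}$, and the single edge $e_0=\{z_1,y_1,x_1\}$. Then $H$ is simple with $n=3m+2$ edges; every two edges other than certain pairs involving $e_0$ intersect, so $\nu_2(H)=1$ (hence no edge of multiplicity $\ge \nu+1=2$); and $\Delta(H)=2m+1\approx \tfrac{2n}{3}$, which satisfies the hypothesis $\Delta\ge 3^6\nu+1$ for large $m$ but is far below $n-2\nu$. (It does satisfy the paper's actual conclusion (i), $\Delta\ge\frac{n-2\nu}{r-1}$.) Running your process on this $H$ from $u=a$ exhibits the stall concretely: the first application at $V_2$ must split (no vertex of $V_2$ lies in $|F|-\nu$ edges of $F$), e.g.\ into edges through $\{a,c\}$ avoiding $b$ versus edges through $\{b,c\}$ avoiding $a$; then at $V_3$ outcome \ref{b1} fires at the vertex $c$, whose degree is only about $\tfrac{2n}{3}$ --- no contradiction and no useful bound, and indeed no run can ever terminate successfully since both of your exits are unreachable.

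The paper resolves this with a mechanism your proposal lacks. It does not track a maximum-degree vertex and its complementary family; it chooses a \emph{maximum-size} set $U$ of vertices whose co-degree satisfies $d(U)\ge 3^{\binom{r+2-|U|}{2}}\nu+1$ (a threshold that decreases as $|U|$ grows, which is what the budget $3^{\binom{r+1}{2}}$ is really calibrated for). Then every \ref{b1} outcome, \emph{whenever} it occurs, yields a vertex lying in many edges that all contain $U$, i.e.\ it extends $U$ to a strictly larger set of large co-degree, contradicting maximality --- this is exactly the missing escape from the stall, and if $|U|$ ever reaches $r$ one gets the fat edge. Moreover, the three conclusions are proved with genuinely different family systems: besides two halves of the edges containing $U$, the paper feeds into Lemma \ref{ballbin_precise} the families $F_i$ of edges missing the individual vertices $u_i\in U$ (up to $r$, respectively $r-1$, families depending on $s$), and for $s=r-1$ a further analysis of the pairwise intersections $F_i\cap F_j$ is needed. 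This is why the bounds degrade from $n-2\nu$ ($s=r$) to $\frac{(r-1)n}{r}-2(r-1)\nu$ ($s=r-1$) to $\frac{n-2\nu}{r-1}$ ($s=2$); your plan to run one two-family engine and ``match against the Gy\'arf\'as values'' at the end cannot produce these distinctions, as the counterexample shows.
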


\begin{proof}
Let $V_1, \dots, V_r$ be the parts of $H$.  For a set $U\subseteq V(H)$, let $d(U)$ denote the number of edges, counting multiplicity, which contain $U$ (i.e. $d(U)$ is the degree of $U$).  Note that since $H$ is $r$-partite, $d(U)>0$ implies that $U$ contains at most one vertex from each part $V_i$.  Let $U\subseteq V(H)$ be maximum such that 
$d(U)\geq 3^{\binom{r+2-|U|}{2}} \nu+1$ and note that $U\neq \emptyset$ by the degree condition.  Without loss of generality, suppose $U=\{u_1, \dots, u_\ell\}$ with $u_i\in V_i$ for all $i\in [\ell]$ and let $E$ be the set of edges containing $U$.  If $\ell=r$, we have an edge of multiplicity at least $3\nu+1\geq u+1$ and we are done; so suppose $1\leq \ell\leq r-1$.  

\noindent
\tbf{Case (i)} ($s=2$). Let $F=\{f\in E(H): f\cap U=\emptyset\}$.  If $|F|\leq \frac{(r-1-\ell)n+2\ell\nu}{r-1}$, then for some $i\in [\ell]$, $$d(u_i)\geq \frac{n-\frac{(r-1-\ell)n+2\ell\nu}{r-1}}{\ell} = \frac{n-2\nu}{r-1}$$ and we are done; so suppose $|F|\geq \frac{(r-1-\ell)n+2\ell\nu}{r-1}+1$.  

Applying Lemma \ref{ballbin_precise} at most $r-\ell$ times with $E, F$ and using the fact that $\nu_2(H)\leq \nu$, it must be the case that \ref{b1} holds within $r-\ell$ steps and we obtain a vertex which is incident with at least $\frac{3^{\binom{r+2-\ell}{2}-1}}{3^{r-\ell}}\nu+1=3^{\binom{r+2-(\ell+1)}{2}}\nu+1$ edges of $E$ contradicting the maximality of $U$.  

\noindent
\tbf{Case (ii) and (iii)} ($r-1\leq s\leq r$).

Since $|E|=d(U)\geq 3^{\binom{r+2-\ell}{2}}\geq 2(3^{\binom{r+2-\ell}{2}-1}\nu+1)$, we can choose disjoint subsets $E_1$ and $E_2$ of $E$, each with at least $3^{\binom{r+2-\ell}{2}-1} \nu+1$ edges.  Applying Lemma \ref{ballbin} at most $r-\ell$ times with $E_1,E_2$, we will either find a vertex which is contained in at least 
$\frac{3^{\binom{r+2-\ell}{2}-1}}{3^{r-\ell}}\nu+1=3^{\binom{r+2-(\ell+1)}{2}}\nu+1$
edges from both $E_1$ and $E_2$, which would violate the maximality of $U$, or else we will get sets $E_1'\subseteq E_1$ and $E_2'\subseteq E_2$ with 
\begin{equation}\label{E1'}
|E_1'|, |E_2'|\geq 3^{\binom{r+1-\ell}{2}}\nu+1\geq 3\nu+1 \text{ such that for all } e_1\in E_1', e_2\in E_2',~ e_1\cap e_2=U
\end{equation}
(where the last inequality holds since $\ell\leq r-1$).

\tbf{Case (ii)} ($s=r$).  We have the desired degree condition unless for all $i\in [\ell]$, the set $F_i$ of edges which avoids $u_i$ has order at least $2\nu+1$.  If $\ell\leq r-2$, then $E_1', E_2', F_1, \dots, F_\ell$ is a collection of $\ell+2\leq r$ sets each of order at least $\nu+1$ which are not cross intersecting, violating the bound on $\nu_r(H)$.  

So suppose $\ell=r-1$.  Applying Lemma \ref{ballbin_precise} with $E, F_1, \dots, F_\ell$ and using the fact that $\ell+1\leq r$ and $\nu_r(H)\leq \nu$, it must be the case that \ref{b1} holds and we obtain a vertex which is incident with at least $3\nu+1$; i.e. an edge of multiplicity at least $\nu+1$.

\tbf{Case (iii)} ($s=r-1\geq 3$).  We have the desired degree condition unless for all $i\in [\ell]$, the set $F_i$ of edges which avoids $u_i$ has order at least $\frac{n}{r}+2(r-1)\nu+1$.  If $\ell\leq r-3$, then $E_1', E_2', F_1, \dots, F_\ell$ is a family of $\ell+2\leq r-1$ sets of at least $\nu+1$ edges each which are not cross intersecting and thus $\nu_{r-1}(H)\geq \nu_{\ell+2}(H)\geq \nu+1$, contradicting the assumption.  If $\ell= r-2$, then applying Lemma \ref{ballbin_precise} at most twice with $E, F_1, \dots, F_\ell$ and using the fact that $\ell+1\leq r-1$ and $\nu_{r-1}(H)\leq \nu$, it must be the case that \ref{b1} holds within two steps and we obtain a vertex which is incident with at least $\frac{3^{\binom{4}{2}-1}}{3^{2}}\nu+1=3^3\nu+1$ edges of $E$ contradicting the maximality of $U$.  

So finally suppose $\ell=r-1$.  If there exists distinct $i,j\in [r-1]$ such that $|F_i\cap F_j|\geq 2\nu+1$, without loss of generality say $|F_{r-2}\cap F_{r-1}|\geq 2\nu+1$, then we apply Lemma \ref{ballbin_precise} with $E, F_1, \dots, F_{r-3}, F_{r-2}\cap F_{r-1}$ and since \ref{b2} can't happen, we have \ref{b1} which gives us an edge of multiplicity at least $\nu+1$.  So suppose $|F_i\cap F_j|\leq 2\nu$ for all distinct $i,j\in [r-1]$.  For all $i\in [r-1]$, let $F_i^*=F_i\setminus (\bigcup_{j\in [r-1]\setminus \{i\}}F_j)$ and note that by the previous sentence and the bound on $|F_i|$, we have $|F_i^*|\geq |F_i|-2(r-2)\nu \geq \frac{n}{r}+2\nu+1$.  Note that $F_1^*, \dots, F_{r-1}^*$ must be cross-intersecting and by the way the sets are defined, the cross-intersection must happen in $V_r$.  Now
 applying Lemma \ref{ballbin_precise} with $F_1^*, \dots, F_{r-1}^*$, we get a vertex in $V_r$ which is adjacent with at least $|F_i^*|-\nu\geq \frac{n}{r}$ edges from each of $F_1^*, \dots, F_{r-1}^*$ giving us a vertex of degree at least $\frac{r-1}{r}n\geq \frac{r-1}{r}n-2(r-1)\nu$ as desired.
\end{proof}

\subsection{Theorem \ref{dual:r2nu2} and Theorem \ref{dual:r3nu2}}\label{sec:nu2}

\begin{lemma}\label{fatedge3}
Let $r\geq 2$ and let $H$ be an $r$-partite multi-hypergraph with $n$ edges and set $\nu:=\nu_{2}(H)$.  If $H$ has an edge $e=\{u_1, \dots, u_r\}$ of multiplicity at least $\nu+1$, then 
\begin{enumerate}[label=\emph{(\roman*)}, ref=(\roman*)]
\item there are at least $n-\nu$ edges incident with $e$,
\item \label{ore'} for all $e'\subseteq e$ with $1\leq |e'|\leq r-1$, either the number of edges incident with every vertex in $e'$ and no vertex in $e\setminus e'$ is at most $\nu$, or the number of edges incident with every vertex in $e\setminus e'$ and no vertex in $e'$ is at most $\nu$, and 
\item \label{note'} either $\Delta(H)\geq \frac{n}{r-1}-2\nu$, or for all $e'\subseteq e$ with $1\leq |e'|=:t\leq r-1$, there are at least $\frac{(r-1-t)n}{r-1}+(2t-1)\nu+1$ edges incident with $e\setminus e'$ but not $e'$.
\end{enumerate}
\end{lemma}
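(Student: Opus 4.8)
The plan is to exploit the defining property of $\nu_2$ throughout: whenever we can produce two disjoint multisets of edges, each of size at least $\nu+1$, that are not cross-intersecting, we contradict $\nu_2(H)=\nu$. All three parts come from either building such a pair or ruling one out. For (i), I would let $E_0$ denote the multiset of the (at least) $\nu+1$ copies of $e$ and let $F=\{f\in E(H): f\cap e=\emptyset\}$ be the edges disjoint from $e$. Since every $f\in F$ is disjoint from $e$, no copy of $e$ meets any $f\in F$, so $E_0$ and $F$ are disjoint and not cross-intersecting. Hence if $|F|\geq \nu+1$ we would get $\nu_2(H)\geq \nu+1$, a contradiction; thus $|F|\leq \nu$, i.e.\ at least $n-\nu$ edges meet $e$.

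For (ii), fix $e'\subseteq e$ with index set $I$ (so $e\setminus e'$ is indexed by $J=[r]\setminus I$), and set $A=\{f: e'\subseteq f,\ f\cap(e\setminus e')=\emptyset\}$ and $B=\{f: (e\setminus e')\subseteq f,\ f\cap e'=\emptyset\}$. Since $e'\neq\emptyset$, every edge of $A$ meets $e'$ while every edge of $B$ avoids $e'$, so $A\cap B=\emptyset$. The key point is that $A$ and $B$ are not cross-intersecting: for $a\in A$ and $b\in B$, in each part $V_i$ with $i\in I$ the edge $a$ uses $u_i$ while $b$ uses a vertex $\neq u_i$, and in each part $V_j$ with $j\in J$ the roles are reversed, so $a$ and $b$ disagree in every coordinate and $a\cap b=\emptyset$. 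Thus $\min\{|A|,|B|\}\leq\nu$, which is exactly the stated dichotomy.

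For (iii), I read ``incident with $e\setminus e'$ but not $e'$'' as meeting $e\setminus e'$ and avoiding $e'$; since $e'\subseteq e$, the number of such edges equals $|\{f: f\cap e\neq\emptyset\}|-|\{f: f\cap e'\neq\emptyset\}|$. Part (i) bounds the first term below by $n-\nu$, and a union bound gives $|\{f: f\cap e'\neq\emptyset\}|\leq \sum_{i\in I}d(u_i)\leq t\,\Delta(H)$, where $t=|e'|$, so the count is at least $n-\nu-t\,\Delta(H)$. Assuming the first alternative fails, i.e.\ $\Delta(H)<\frac{n}{r-1}-2\nu$, substitution yields count $>\frac{(r-1-t)n}{r-1}+(2t-1)\nu$, and since the count is an integer this gives the claimed bound (the $+1$ coming from integrality together with the strict inequality). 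The computations are all short; the only things to get exactly right are the coordinate-by-coordinate verification in (ii) that $A$ and $B$ share no vertex, which is what forces them to be non-cross-intersecting, and pinning down the intended meaning of ``incident with'' in (iii).
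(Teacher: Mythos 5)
Your proof is correct and follows essentially the same route as the paper: parts (i) and (ii) are exactly the intended ``pair of disjoint non-cross-intersecting families of size $\nu+1$'' contradictions (the paper dismisses them as immediate from the definition of $\nu_2$), and your part (iii) is the paper's argument in contrapositive form --- the paper splits on whether the number of edges meeting $e'$ is at least $\frac{tn}{r-1}-2t\nu$ (pigeonhole giving a vertex of degree $\frac{n}{r-1}-2\nu$), which is the same computation as your union bound $\sum_{i\in I}d(u_i)\le t\,\Delta(H)$. The small rounding slippage in passing from a strict inequality to the ``$+1$'' in the final count is present in the paper's own proof as well, so it is not a gap relative to the paper.
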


\begin{proof}
Note that (i) and (ii) just follow from the condition on $\nu_2(H)$.  To see (iii), let $e'\subseteq e$ with $1\leq |e'|=:t\leq r-1$.  If the number of edges incident with $e'$ is at least $\frac{tn}{r-1}-2t\nu$, then some $u\in e'$ satisfies $d(u)\geq \frac{n}{r-1}-2\nu$ and we are done.  So suppose that $e'$ is incident with fewer than $\frac{tn}{r-1}-2t\nu$ edges, which means there are at least $$n-\nu-(\frac{tn}{r-1}-2t\nu)+1=\frac{(r-1-t)n}{r-1}+(2t-1)\nu+1$$ edges which are incident with $e\setminus e'$ but not $e'$.  
\end{proof}

Now we prove that if $H$ is a bipartite multigraph with $n$ edges and $\nu_2(H)<n/6$, then $\Delta(H)\geq n-2\nu_2(H)$.

\begin{proof}[Proof of Theorem \ref{dual:r2nu2}]
Let $V_1, V_2$ be the parts of $H$ and set $\nu:=\nu_2(H)<n/6$.  

\noindent
\textbf{Case 1} (There exists an edge $u_1u_2$ of multiplicity at least $\nu+1$).  By Lemma \ref{fatedge3}(i) and (ii), there are at least $n-\nu$ edges incident with $\{u_1,u_2\}$ and without loss of generality, there are at most $\nu$ edges which are incident with $u_2$ but not $u_1$.  Thus there are at least $n-2\nu$ edges incident with $u_1$; i.e. $\Delta(H)\geq n-2\nu$.

\noindent
\textbf{Case 2} (Every edge has multiplicity at most $\nu$).  Suppose first that there exists $u_1\in V_1, u_2\in V_2$ so that $d(u_1), d(u_2)\geq 2\nu+1$.  Since $u_1u_2$ has multiplicity at most $\nu$, there are at least $\nu+1$ edges incident with $u_1$ but not $u_2$ and at least $\nu+1$ edges incident with $u_2$ but not $u_1$, a violation of the fact that $\nu_2(H)\leq \nu$.  So suppose without loss of generality that 
\begin{equation}\label{maxV2}
d(u)\leq 2\nu \text{ for all } u\in V_2.
\end{equation}

Now let $V_2'\subseteq V_2$ be minimal such that $e(V_2', V_1)\geq 2\nu+1$.  By \eqref{maxV2} and minimality, we have $2\nu+1\leq e(V_2', V_1)\leq 4\nu$.  Since $6\nu<n$, we also have $e(V_2\setminus V_2', V_1) = n - e(V_2', V_1)\geq 2\nu+1$.  Furthermore, by pigeonhole and the fact that $6\nu<n$, we have either $e(V_2', V_1)\geq 3\nu+1$ or $e(V_2\setminus V_2', V_1)\geq 3\nu+1$.  So by applying Lemma \ref{ballbin_precise} (with $a_1=a_2=1$, $i=1$, $F_1=E(V_2', V_1)$, and $F_2=E(V_2\setminus V_2', V_1)$), we either have \ref{b2} (that is, there exists $F_1'\subseteq F_1$ with $|F_1'|\geq \nu+1$ and $F_2'\subseteq F_2$ with $|F_2'|\geq \nu+1$ such that $F_1'$ and $F_2'$ are not cross intersecting in $V_1$) which violates the fact that $\nu_2(H)\leq \nu$, or \ref{b1} which implies that there exists a vertex in $V_1$ which is incident with at least $|F_1|+|F_2|-2\nu=n-2\nu$ edges; i.e. $\Delta(H)\geq n-2\nu$.  
\end{proof}

\begin{proposition}\label{fatedgetodegree3}
Let $H$ be an $3$-partite $3$-uniform multi-hypergraph with $n$ edges and set $\nu:=\nu_{2}(H)$.  If $H$ has an edge  of multiplicity at least $\nu+1$, then $\Delta(H)\geq \frac{n}{2}-2\nu$.
\end{proposition}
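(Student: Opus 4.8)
The plan is to argue by contradiction: assume $\Delta(H) < \frac{n}{2}-2\nu$ and produce either a vertex that is too heavy or a pair of families forbidden by $\nu_2(H)\le\nu$. First I would classify every edge of $H$ by its intersection pattern with the fat edge $e=\{u_1,u_2,u_3\}$. For $S\subseteq\{1,2,3\}$ let $m_S$ count, with multiplicity, the edges meeting $\{u_1,u_2,u_3\}$ in exactly $\{u_i:i\in S\}$, and abbreviate the singletons $s_i=m_{\{i\}}$, the pairs $p_{ij}=m_{\{i,j\}}$, $z=m_\emptyset$, and $R=m_{\{1,2,3\}}\ge\nu+1$. Then $n=z+\sum_i s_i+\sum_{i<j}p_{ij}+R$, and since the edges avoiding $u_i$ are exactly those of the four types not containing $u_i$, we have the identity $d(u_i)=n-(s_j+s_k+p_{jk}+z)$ for $\{i,j,k\}=\{1,2,3\}$. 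From Lemma \ref{fatedge3}(i) I get $z\le\nu$, and from Lemma \ref{fatedge3}(ii) the dichotomy that for each $i$ either $s_i\le\nu$ or the opposite pair $p_{jk}\le\nu$.

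Writing $P=\sum_i s_i$, the assumption $d(u_i)<\frac{n}{2}-2\nu$ rewrites, via the identity above, as $s_j+s_k+p_{jk}+z>\frac{n}{2}+2\nu$ for each $i$. Summing these three inequalities, substituting $\sum_{i<j}p_{ij}=n-z-P-R$, and using $z\le\nu$, I would obtain the key estimate $P>\frac{n}{2}+R+4\nu$; that is, the singleton edges dominate. In particular at least two of $s_1,s_2,s_3$ exceed $\nu$, since each $s_i\le d(u_i)<\frac{n}{2}-2\nu$, so if only one were large then $P\le\max_i s_i+2\nu<\frac{n}{2}$, contradicting the estimate.

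Assume $s_1\ge s_2\ge s_3$, so $s_1,s_2>\nu$. The heart of the argument is a covering step in the third part: a type-$\{1\}$ edge $(u_1,a,b)$ and a type-$\{2\}$ edge $(c,u_2,d)$ can share a vertex \emph{only} in $V_3$, since they disagree at $u_1$ in $V_1$ and at $u_2$ in $V_2$. I would apply Lemma \ref{ballbin_precise} to these two families in the part $V_3$. The alternative \ref{b2} would produce subfamilies of size $\ge\nu+1$ that are not cross-intersecting in $V_3$, hence---because these two types cross-intersect nowhere else---not cross-intersecting at all; being disjoint, they would witness $\nu_2(H)\ge\nu+1$, a contradiction. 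Thus \ref{b1} holds and gives a vertex of $V_3$ of degree at least $s_1+s_2-2\nu$. If $s_1+s_2\ge\frac{n}{2}$ this already produces a vertex of degree $\ge\frac{n}{2}-2\nu$, contradicting the assumption. (The size hypotheses of Lemma \ref{ballbin_precise} are met here: if $s_2\le 2\nu$ then $s_1\ge\frac{n}{2}-2\nu$ would force $s_1\le d(u_1)<\frac{n}{2}-2\nu$, impossible, so $s_2\ge 2\nu+1$ and then $s_1\ge 3\nu+1$.)

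It remains to treat $s_1+s_2<\frac{n}{2}$. I would first rule out $s_3\le\nu$: that would give $P=s_1+s_2+s_3<\frac{n}{2}+\nu$, contradicting $P>\frac{n}{2}+4\nu$. Hence $s_3>\nu$, so the dichotomy from Lemma \ref{fatedge3}(ii) forces the opposite pair $p_{12}\le\nu$; but then $d(u_3)=n-(s_1+s_2+p_{12}+z)>n-\frac{n}{2}-\nu-\nu=\frac{n}{2}-2\nu$, again contradicting the assumption. This exhausts the cases. The step I expect to be the main obstacle is precisely the routing around the pair-type edges: edges containing two of the $u_i$ pairwise cross-intersect through a common $u_i$, so they carry no $\nu_2$-leverage and cannot be fed into Lemma \ref{ballbin_precise}; the whole case split is engineered so that the covering lemma is applied only to the two singleton families (which share a single part), while the pair edges are absorbed through the estimate $P>\frac{n}{2}+R+4\nu$ and the dichotomy. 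A secondary subtlety is that the covering loses exactly $2\nu$, matching the target error term with no slack, so one must check---as above---that whenever the covering is invoked the two families are genuinely large.
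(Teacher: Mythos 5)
Your argument is sound in outline and, at its core, uses the same key mechanism as the paper: both proofs come down to applying the covering lemma in $V_3$ to the two ``singleton'' families (edges meeting $e$ only in $u_1$, respectively only in $u_2$), exploiting that these families are disjoint and can only cross-intersect in $V_3$, so that alternative \ref{b2} would contradict $\nu_2(H)\le\nu$. The bookkeeping differs: the paper gets all the needed size bounds from the dichotomy in Lemma \ref{fatedge3}.\ref{note'} (either $\Delta(H)\ge\frac{n}{2}-2\nu$ and we are done, or $|E_i'|\ge 3\nu+1$ for all $i$ and $|(E_1\cup E_2)\setminus E_3|\ge\frac{n}{2}+\nu+1$, after which \ref{ore'} kills $p_{12}$ and a single application of Lemma \ref{ballbin} finishes), whereas you run a contradiction argument and recover analogous facts by counting (the estimate $P>\frac{n}{2}+R+4\nu$ and the case split on $s_1+s_2$ versus $\frac{n}{2}$). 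Your Case 2 ($s_1+s_2<\frac{n}{2}$) is correct and nicely avoids the covering lemma altogether; your key estimate and the derivation of $s_2\ge 2\nu+1$ in Case 1 are also correct.

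There is, however, one genuine gap, located exactly where the paper's Lemma \ref{fatedge3}.\ref{note'} does its work: the phrase ``so $s_2\ge 2\nu+1$ and then $s_1\ge 3\nu+1$.'' The second claim is a non sequitur --- neither $s_1\ge s_2\ge 2\nu+1$ nor $s_1\ge(s_1+s_2)/2\ge n/4$ implies $s_1\ge 3\nu+1$, and Lemma \ref{ballbin_precise} genuinely needs $|F_1|\ge 3\nu+1$ (its proof breaks below that threshold). This matters because the proposition carries no hypothesis bounding $\nu$ in terms of $n$; the bound $\nu_2\le n/3^9$ appears only in Theorem \ref{dual:r3nu2}, where the proposition is invoked. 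So the regime $n=O(\nu)$ (say $n<12\nu+4$) is not vacuous, and there your size verification fails. The gap is patchable inside your framework: in Case 1 you have $d(u_1)\ge R+s_1\ge(\nu+1)+(2\nu+1)=3\nu+2$, so your contradiction hypothesis $d(u_1)<\frac{n}{2}-2\nu$ forces $n>10\nu+4$; then $3s_1\ge P>\frac{n}{2}+4\nu+R\ge\frac{n}{2}+5\nu+1>(5\nu+2)+(5\nu+1)=10\nu+3\ge 9\nu+3$, whence $s_1>3\nu+1$ as required. (Equivalently: if $n\le 10\nu+4$, then $\Delta(H)\ge d(u_1)\ge 3\nu+2\ge\frac{n}{2}-2\nu$ holds outright.) With those two or three lines added, your proof is complete; the paper sidesteps the issue entirely by building the bound $|E_i'|\ge 3\nu+1$ into an ``either we are done or\dots'' dichotomy that is valid for every $n$ and $\nu$.
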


\begin{proof}
Let $e=\{u_1,u_2, u_3\}$ be an edge of multiplicity at least $\nu+1$.  
For all distinct $i,j,k\in [3]$, let $E_i$ be the set of edges incident with $u_i$ and let $E_i'=E_i\setminus (E_j\cup E_k)$.
By Lemma \ref{fatedge3}.\ref{note'} we have $|(E_1\cup E_2)\setminus E_3|\geq \frac{n}{2}+\nu+1$ and for all $i\in [3]$, $|E_i'|\geq 3\nu+1$.  So by Lemma \ref{fatedge3}.\ref{ore'}, $|(E_1\cap E_2)\setminus E_3|\leq \nu$.  Thus $|E_1'|+|E_2'|\geq \frac{n}{2}+1$.  Now applying Lemma \ref{ballbin} with $E_1'$ and $E_2'$, we can't have \ref{b2'}, thus \ref{b1'} holds and we have a vertex in $V_3$ which is incident with more than $\frac{n}{2}-2\nu$ edges.
\end{proof}

Now we prove that if $H$ is a $3$-partite $3$-uniform multi-hypergraph with $n$ edges and $\nu_{2}(H)\leq \frac{n}{3^9}=\frac{n}{19683}$, then $\Delta(H)\geq \frac{n}{2}-2\nu_2(H)$.

\begin{proof}[Proof of Theorem \ref{dual:r3nu2}]
Set $\nu:=\nu_2(H)$. By Lemma \ref{decentdegree} (with $\Delta=729=3^6$), we have $\Delta(H)\geq 729 \nu +1=3^{\binom{3+1}{2}}\nu+1$.  Now by Lemma \ref{lem:fatedge}, we are done or we have an edge of multiplicity at least $\nu+1$ in which case we are done by Proposition \ref{fatedgetodegree3}.
\end{proof}

In this subsection we solved Problem \ref{prob:gen} in the case $k=2$ and $2\leq r\leq 3$.  Because of Lemma \ref{decentdegree} and Lemma \ref{lem:fatedge}, in order to solve Problem \ref{prob:gen} in the case $k=2$ and $r\geq 4$ it suffices to prove the following generalization of Proposition \ref{fatedgetodegree3}.  

\begin{conjecture}\label{con:k=2}
Let $r\geq 4$ and let $H$ be an $r$-partite $r$-uniform multi-hypergraph with $n$ edges and set $\nu:=\nu_{2}(H)$.  There exists $d_r>0$ such that if $H$ has an edge of multiplicity at least $\nu+1$, then $\Delta(H)\geq \frac{n}{r-1}-d_r\nu$.
\end{conjecture}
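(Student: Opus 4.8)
The plan is to mirror the proof of Proposition~\ref{fatedgetodegree3} (the $r=3$ case), organizing everything around the \emph{signature} of an edge relative to the fat edge $e=\{u_1,\dots,u_r\}$ (with $u_i\in V_i$). For nonempty $A\subseteq[r]$ write $m_A$ for the number of edges whose intersection with $e$ is exactly $\{u_i:i\in A\}$, so that $d(u_p)=\sum_{A\ni p}m_A$. First I would record the structural consequences of the fat edge via Lemma~\ref{fatedge3}: at least $n-\nu$ edges meet $e$, so $\sum_{A\neq\emptyset}m_A\ge n-\nu$; and for every partition $[r]=A\sqcup\bar A$ the edges of signature exactly $A$ and exactly $\bar A$ are pairwise \emph{disjoint} (in every part one carries $u_i$ and the other does not), whence $\nu_2(H)\le\nu$ forces $\min(m_A,m_{\bar A})\le\nu$ — this is Lemma~\ref{fatedge3}.\ref{ore'}. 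Consequently the \emph{large} signatures $\mathcal{L}=\{A:m_A>\nu\}$ contain at most one member of each complementary pair and carry all but at most $2^r\nu$ of the edges. We may assume $d(u_p)<\frac{n}{r-1}-d_r\nu$ for every $p$, since otherwise we are done.

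The engine for producing a high-degree vertex off of $e$ is a \emph{pairwise} localization of cross-intersections to a single part, exactly as in Proposition~\ref{fatedgetodegree3}. The key point is that two edge sets $F_1,F_2$ of signature $A$ and $B$ cross-intersect only in a fixed part $V_p$ precisely when $p\notin A\cup B$ and, for every $q\neq p$, exactly one of $A,B$ contains $q$; equivalently $A\sqcup B=[r]\setminus\{p\}$. When this holds, applying Lemma~\ref{ballbin_precise} to $F_1,F_2$ with $i=p$ and $a_1=a_2=1$ cannot produce branch~\ref{b2} (two subsets that fail to cross-intersect in $V_p$ would fail to cross-intersect at all, contradicting $\nu_2(H)\le\nu$), so branch~\ref{b1} yields a vertex of $V_p$ of degree at least $m_A+m_B-2\nu$. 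Thus it would suffice to find a part $p$ and a partition $[r]\setminus\{p\}=A\sqcup B$ into two nonempty blocks with $m_A+m_B\ge\frac{n}{r-1}+2\nu$. For $r=3$ this is automatic: $[r]\setminus\{p\}$ has only two elements, the unique partition is $\{1\},\{2\}$, and Lemma~\ref{fatedge3} forces $m_{\{1\}}+m_{\{2\}}\ge n/2$.

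The main obstacle is that for $r\ge 4$ no single such partition need carry large mass. When the edge mass is spread over the singleton signatures $\{1\},\dots,\{r\}$ (each $m_{\{i\}}\approx n/r$, all other $m_A\approx 0$), every partition $[r]\setminus\{p\}=A\sqcup B$ has a block of size $\ge 2$ (as $|[r]\setminus\{p\}|\ge 3$), whose signature carries essentially no edges, so $m_A+m_B\approx n/r<\frac{n}{r-1}$. Hence the clean two-set argument is genuinely insufficient, and one is pushed toward using more sets. The natural remedy — take $F_1,F_2$ of signature $\{1\},\{2\}$ and iterate Lemma~\ref{ballbin} across the $r-2$ parts $V_3,\dots,V_r$ in which they may meet — fails to conclude: at each step one either concentrates, but only on the \emph{current}, already-shrunken subsets, giving too small a degree, or splits into smaller subsets that fail to cross-intersect in $V_q$; only after exhausting all $r-2$ parts without ever concentrating does one reach a true $\nu_2$-contradiction. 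Unlike the $s=r-1$ localization in the last paragraph of Lemma~\ref{lem:fatedge}, where an $(r-1)$-fold failure of cross-intersection contradicts the bound on $\nu_{r-1}$, here only the bound on $\nu_2$ is available, so multi-set localizations do not pay off.

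I therefore expect the correct proof to require an adaptive choice of target part together with a balancing argument: in any configuration where all two-set masses $m_A+m_B$ are small, the robustness from $\nu_2(H)\le\nu$ should force the cross-intersections of the singleton classes to concentrate in some part — morally, spreading one class across a part forces another class to bunch up (for $r=4$, if the $V_4$-vertices of the signature-$\{1\}$ and signature-$\{2\}$ edges are nearly all distinct, then these two classes meet essentially only in $V_3$, which already localizes them and yields degree $\approx n/2\ge \frac{n}{r-1}$). Turning this trade-off into a clean statement uniform in $r$ is the crux, and is exactly what remains open; establishing it would, by Lemma~\ref{decentdegree} and Lemma~\ref{lem:fatedge}, resolve Problem~\ref{prob:gen} in the outstanding case $k=2$, $r\ge 4$.
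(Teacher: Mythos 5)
The statement you were given is Conjecture \ref{con:k=2}, which the paper does not prove: it is stated precisely because Lemma \ref{decentdegree} and Lemma \ref{lem:fatedge} reduce the open case $k=2$, $r\geq 4$ of Problem \ref{prob:gen} to it, and the Conclusion reiterates that it remains open even for $r=4$. So there is no proof in the paper to compare against, and your proposal rightly does not manufacture one; it ends by acknowledging that the crux is unresolved. That is the correct conclusion, and the honest way to present it.

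For what it is worth, the partial analysis you give is correct and consistent with how the paper's machinery works. The signature counts $m_A$, the consequence of Lemma \ref{fatedge3}.\ref{ore'} that at most one of each complementary pair $m_A$, $m_{[r]\setminus A}$ can exceed $\nu$, and your localization engine are all sound: when $A\sqcup B=[r]\setminus\{p\}$, edges of signature $A$ and signature $B$ can only meet in $V_p$, so in Lemma \ref{ballbin_precise} branch \ref{b2} would contradict $\nu_2(H)\leq\nu$, and branch \ref{b1} yields a vertex of $V_p$ of degree at least $m_A+m_B-2\nu$. This is exactly the mechanism behind Proposition \ref{fatedgetodegree3}, where $r=3$ makes $\{1\}\sqcup\{2\}=[3]\setminus\{3\}$ the unique admissible partition and Lemma \ref{fatedge3} supplies mass at least $n/2$ to it. Your obstruction for $r\geq 4$ is also genuine: if the mass spreads over the singleton signatures ($m_{\{i\}}\approx n/r$ for each $i$), then every admissible partition $A\sqcup B = [r]\setminus\{p\}$ has a block of size at least two carrying essentially no edges, so pairwise localization cannot certify degree $\frac{n}{r-1}$; and iterating Lemma \ref{ballbin} over the parts in which signature-$\{1\}$ and signature-$\{2\}$ edges may meet degrades the surviving sets to size $O(\nu)$ before branch \ref{b1'} can pay off, because only the $\nu_2$ bound (rather than a $\nu_{r-1}$ bound, as in Case (iii) of Lemma \ref{lem:fatedge}) is available to exclude branch \ref{b2'} when more than two sets are in play. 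Whether a small $\nu_2$ forces, in such a spread configuration, some vertex off $e$ of degree $\frac{n}{r-1}-O(\nu)$ is precisely the content of the conjecture, and your closing remark that settling this would finish Problem \ref{prob:gen} for $k=2$, $r\geq 4$ matches the paper's own framing.
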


Finally we give a proof of Proposition \ref{propweak} (which is essentially just a much weaker version of Conjecture \ref{con:k=2}).

\begin{proposition}
Let $r\geq 4$ and let $H$ be an $r$-partite $r$-uniform multi-hypergraph with $n$ edges.  If $\nu_{2}(H)\leq \frac{n}{3^{\binom{r+1}{2}+r}}$, then $\Delta(H)\geq \frac{n-\nu_2(H)}{r}$.
\end{proposition}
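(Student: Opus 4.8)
The plan is to follow exactly the reduction advertised in the paragraph preceding the statement: combine Lemma~\ref{decentdegree} and Lemma~\ref{lem:fatedge} to reduce to the case of a heavy edge, and then dispatch that case with a one-line pigeonhole argument that exploits how weak the target bound $\frac{n-\nu_2(H)}{r}$ is. Throughout I write $\nu := \nu_2(H)$ and keep in mind the hypothesis $\nu \leq n/3^{\binom{r+1}{2}+r}$.

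First I would secure a vertex of ``decent'' degree. Since $3^{\binom{r+1}{2}+r} = 3^r\cdot 3^{\binom{r+1}{2}}$, the hypothesis reads exactly as $\nu \leq \frac{n}{3^r\Delta}$ for the choice of parameter $\Delta = 3^{\binom{r+1}{2}}$, so Lemma~\ref{decentdegree} (with $s=2$) yields $\Delta(H) \geq 3^{\binom{r+1}{2}}\nu + 1$. This is precisely the hypothesis of Lemma~\ref{lem:fatedge}, which (taking $s=2$, legal since $r\geq 4$ makes $2\in\{2,r-1,r\}$) gives the dichotomy: either $\Delta(H) \geq \frac{n-2\nu}{r-1}$, or $H$ contains an edge of multiplicity at least $\nu+1$.

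In the first alternative I am already done, because $\frac{n-2\nu}{r-1}\geq \frac{n-\nu}{r}$ is equivalent (cross-multiplying) to $n\geq (r+1)\nu$, which holds comfortably since $3^{\binom{r+1}{2}+r}\geq r+1$. In the second alternative, let $e=\{u_1,\dots,u_r\}$ be an edge of multiplicity at least $\nu+1$. By Lemma~\ref{fatedge3}(i), at least $n-\nu$ edges of $H$ are incident with $e$, i.e.\ meet $\{u_1,\dots,u_r\}$; hence $\sum_{i=1}^r d(u_i)\geq n-\nu$, and by pigeonhole some $u_i$ satisfies $d(u_i)\geq \frac{n-\nu}{r}$, giving $\Delta(H)\geq \frac{n-\nu}{r}$.

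I do not expect any genuine obstacle here: every serious ingredient is already proved. The point is that, in contrast to Conjecture~\ref{con:k=2} (which demands the much sharper $\frac{n}{r-1}-d_r\nu$ and would require extracting a single high-degree vertex in the part ``opposite'' a heavy edge, as is done for $r=3$ in Proposition~\ref{fatedgetodegree3}), the weak target $\frac{n-\nu}{r}$ lets me simply split the $n-\nu$ edges incident to a heavy edge among its $r$ endpoints. The only things to double-check are the arithmetic matching of the constant $3^{\binom{r+1}{2}+r}$ to the two lemma hypotheses and the trivial inequality $n\geq(r+1)\nu$.
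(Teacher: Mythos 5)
Your proposal is correct and takes essentially the same route as the paper's own proof: Lemma~\ref{decentdegree} (with $\Delta=3^{\binom{r+1}{2}}$) to get $\Delta(H)\geq 3^{\binom{r+1}{2}}\nu+1$, then Lemma~\ref{lem:fatedge} with $s=2$ to either finish or produce an edge of multiplicity at least $\nu+1$, then Lemma~\ref{fatedge3}(i) plus averaging over the $r$ vertices of that edge. The only difference is cosmetic: you explicitly check that $\frac{n-2\nu}{r-1}\geq \frac{n-\nu}{r}$ (i.e.\ $n\geq(r+1)\nu$) in the first alternative, a step the paper leaves implicit.
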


\begin{proof}
Set $\nu:=\nu_{2}(H)$.
By Lemma \ref{decentdegree} we have $\Delta(H)\geq 3^{\binom{r+1}{2}}\nu+1$.  Now by Lemma \ref{lem:fatedge}, we are done or we have an edge $e$ of multiplicity at least $\nu+1$.  Thus by Lemma \ref{fatedge3}.(i), we have $n-\nu$ edges incident with $e$ so, by averaging, one of these vertices has degree at least $\frac{n-\nu}{r}$.
\end{proof}

\subsection{Theorem \ref{dual:rrnur}}\label{sec:nur}

\begin{proposition}\label{fatedgetodegree}
Let $r\geq 3$ and let $H$ be an $r$-partite $r$-uniform hypergraph with $n$ edges and set $\nu:=\nu_r(H)$.  If $H$ has an edge of multiplicity at least $\nu+1$, then $\Delta(H)\geq n-(r-1)\nu$.
\end{proposition}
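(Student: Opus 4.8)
The plan is to work in the dual and to exploit the rigidity of a fat edge. Write $e=\{u_1,\dots,u_r\}$ with $u_i$ in the $i$-th part $V_i$, let $E_0$ be a set of $\nu+1$ copies of $e$, and for each $i$ let $B_i$ be the set of edges avoiding $u_i$, so that $d(u_i)=n-|B_i|$. The first thing I would record is that almost every edge meets $e$. Let $F$ be the set of edges disjoint from $e$. If $|F|\ge \nu+1$, then $E_0$ together with any $\nu+1$ edges of $F$ is already a non-cross-intersecting pair (every copy of $e$ is disjoint from every edge of $F$), and padding with any $r-2$ further pairwise disjoint blocks of $\nu+1$ edges — which exist since we may assume $\nu\le n/r-1$, so that $n\ge r(\nu+1)$ — produces $r$ disjoint non-cross-intersecting sets, contradicting $\nu_r(H)=\nu$. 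Hence $|F|\le\nu$, and at least $n-\nu$ edges meet $e$.

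For the degree bound I would argue by contradiction, assuming $d(v)<n-(r-1)\nu$ for every vertex $v$, so that $|B_i|\ge (r-1)\nu+1$ for all $i$. The goal is to manufacture $r$ pairwise disjoint sets of $\nu+1$ edges that are not cross-intersecting. The structural point that makes $E_0$ useful is that any common vertex of a transversal containing a copy of $e$ must lie in $e$, hence be some $u_j$. So if I take $E_0$ as one set and sets $E_i^*\subseteq B_i$ $(1\le i\le r-1)$ as the others, the family is non-cross-intersecting exactly when there is no index $j$ such that every $E_i^*$ contains an edge through $u_j$. Since $E_i^*\subseteq B_i$ has no edge through $u_i$, the indices $1,\dots,r-1$ are automatically excluded, and only $j=r$ can cause trouble. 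Thus it suffices to arrange that one of these sets also avoids $u_r$, i.e. to find a ``doubling'' block of $\nu+1$ edges inside some $B_i\cap B_r$, with enough disjoint room left in the remaining $B_i$'s; such a block contradicts $\nu_r(H)=\nu$ and finishes the proof.

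The crux is therefore to guarantee the doubling, and this is where non-$u$ common vertices enter and where Lemma \ref{ballbin_precise} does the work. If some pair $B_i\cap B_j$ contained $\nu+1$ edges I would be done at once, so the hard case is when all pairwise sets $B_i\cap B_j$ are small; there nearly every edge contains at least $r-1$ of the $u_\ell$. I would then play two automatically non-cross-intersecting families against the degree assumption: complementary-type pairs (a block of edges of $u$-type $U$ and a block of type $[r]\setminus U$ are disjoint because $e$ meets every part — the mechanism behind Lemma \ref{fatedge3}(ii), valid here since $\nu_2(H)\le\nu_r(H)$ by Observation \ref{obs:s<t}), and the family of edges of type $[r]\setminus\{\ell\}$ over all $\ell$, which one checks is never cross-intersecting. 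Feeding the sets $B_i$ into Lemma \ref{ballbin_precise} part by part, exactly as in the proofs of Lemmas \ref{decentdegree} and \ref{lem:fatedge}, each part either splits off a genuinely non-cross-intersecting refinement (eventually contradicting $\nu_r(H)=\nu$) or concentrates almost all of the relevant edges on a single vertex, which then has degree at least $n-(r-1)\nu$.

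I expect the main obstacle to be precisely this last step: controlling, simultaneously across all $r$ parts, the disjointness of the blocks, their sizes (so each stays above the $\nu+1$ threshold needed to invoke $\nu_r$), and the accumulated error, so that the total loss is held to $(r-1)\nu$. A purely counting version of the argument only yields $\Delta(H)\ge n-r\nu$: in the all-pairwise-small regime one gets $|B_i|\le |P_i|+(r-1)\nu$ with $|P_i|\le\nu$, losing an extra $\nu$. Shaving that final $\nu$ is exactly what the sharp ``(B1) versus (B2)'' alternative of Lemma \ref{ballbin_precise}, pivoted on the fat edge $e$, is designed to provide.
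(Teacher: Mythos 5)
Your ``easy case'' is sound and coincides with the paper's own argument: the observation that the family $E_0,E_1^*,\dots,E_{r-1}^*$ with $E_i^*\subseteq B_i$ is non-cross-intersecting as soon as one block also avoids $u_r$ is exactly the mechanism behind the paper's Claim~\ref{clm:2way}, which shows $|F_i\cap F_j|\le\nu$ for every pair. The genuine gap is in your hard case (all pairwise intersections of size at most $\nu$), and it sits exactly where you flag it, but your proposed repair cannot work. In that regime, once some exact-type block $G_{i_0}$ (edges avoiding $u_{i_0}$ and no other $u_\ell$) has size at most $\nu$ (otherwise the family $G_1,\dots,G_r$ already violates $\nu_r(H)\le\nu$), the set $B_{i_0}$ you must bound is a union of $r$ pieces each of size at most $\nu$, hence $|B_{i_0}|\le r\nu$ and every set available to you has size $O(r\nu)$. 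Lemma~\ref{ballbin_precise} is useless at this scale: its branch \ref{b2} produces families that fail to cross-intersect \emph{in a single part} $V_i$ only, so converting it into a genuine violation of $\nu_r(H)\le\nu$ requires iterating over the parts, and each iteration divides the size guarantee by $3$ --- this is precisely why the paper's applications of that lemma (Lemmas~\ref{decentdegree} and~\ref{lem:fatedge}) carry hypotheses of size $3^{\binom{r+1}{2}}\nu$. With an $O(r\nu)$ budget you cannot afford even the first application (for $r=3$ one needs $|F_1|\ge 3\nu+1$ but $|B_{i_0}|\le 3\nu$), let alone $r$ of them, and pivoting on the fat edge does not change this: the fat edge forces any common vertex to be some $u_j$, but \ref{b2} must still be secured part by part.

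The idea you are missing is the paper's cyclic relaxation, which is purely structural and never invokes Lemma~\ref{ballbin_precise}. Instead of exact types, the paper takes $F_i':=F_i\setminus\bigcup_{j\in[r]\setminus\{i,i+1\}}F_j$ (indices cyclic), so each block may keep edges that also avoid the one designated vertex $u_{i+1}$. Only $r-2$ pairwise intersections are subtracted, so $|F_i'|\ge|F_i|-(r-2)\nu\ge\nu+1$ --- this is exactly the $\nu$ you could not shave --- while the blocks are pairwise disjoint by construction, and the family is non-cross-intersecting outright: if a transversal $e_1\in F_1',\dots,e_r\in F_r'$ met at $u\in V_j$, then choosing any $k\notin\{j-1,j\}$ we have $F_k'\cap F_j=\emptyset$, so $e_k$ contains $u_j$, forcing $u=u_j$, contradicting that $e_j$ avoids $u_j$. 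One smaller point: your reduction ``we may assume $\nu\le n/r-1$'' is not justified by anything you wrote (the fat edge only gives $\Delta(H)\ge\nu+1$, which can fall short of $n-(r-1)\nu$ by up to $r-2$ in that range); some largeness of $n$ relative to $r\nu$ is genuinely needed for the disjoint-block extractions both you and the paper perform --- the paper's one-line proof of Claim~\ref{clm:2way} elides the same disjointness issue --- but this is harmless in the regime $\nu\le n/3^{\binom{r+1}{2}+r}$ where the proposition is actually applied.
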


\begin{proof}
Assume there exists an edge $e=\{u_1, \dots, u_r\}$ of multiplicity at least $\nu+1$.  For all $i\in [r]$, let $F_i$ be the set of edges which avoid $u_i$.  If $|F_i|\leq (r-1)\nu$ for some $i\in [r]$, then $d(u_i)\geq n-(r-1)\nu$ and we are done; so suppose $|F_i|\geq (r-1)\nu+1$ for all $i\in [r]$.

\begin{claim}\label{clm:2way}
For all distinct $i,j\in [r]$, $|F_i\cap F_j|\leq \nu$.
\end{claim}

\begin{proofclaim}
Suppose for contradiction that $|F_i\cap F_j|\geq \nu+1$ for some distinct $i,j\in [r]$ and without loss of generality suppose $\{i,j\}=[2]$.  Now $e, F_1\cap F_2, F_3,  \dots, F_r$ is a collection of $r$ sets violating $\nu_{r}(H)\leq \nu$.  
\end{proofclaim}

Now for all $i\in [r-1]$, let $F_i'=F_i\setminus \bigcup_{j\in [r]\setminus\{i,i+1\}}F_j$ and let $F_r'=F_r\setminus \bigcup_{j\in [r]\setminus\{r,1\}}F_j$.  Note that by Claim \ref{clm:2way} we have that for all $i\in [r]$, $|F_i'|\geq (r-1)\nu+1-(r-2)\nu=\nu+1$.  Furthermore, by construction, we have $F_i'\cap F_j'=\emptyset$ for all distinct $i,j\in [r]$.  So we have $r$ disjoint sets $F_1', \dots, F_r'$ each of order at least $\nu+1$ which are not cross intersecting, violating the assumption.  Indeed, let $e_i\in F_i'$ for all $i\in [r]$ and suppose for contradiction that $\bigcap_{i\in [r]}e_i\neq \emptyset$.  Let $u\in \bigcap_{i\in [r]}e_i$ and suppose without loss of generality that $u\in V_1$.  We cannot have $u=u_1$ since $e_1\in F_1'\subseteq F_1$ misses the vertex $u_1$, but also we cannot have $u\neq u_1$ since $e_2\in F_2'$ and $F_2'\cap F_1=\emptyset$ and thus $e_2$ touches $u_1$.
\end{proof}

Now we prove that if $r\geq 3$ and $H$ is an $r$-partite $r$-uniform hypergraph with $n$ edges and $\nu_r(H)\leq \frac{n}{3^{\binom{r+1}{2}+r}}$, then $\Delta(H)\geq n-(r-1)\nu_r(H)$.

\begin{proof}[Proof of Theorem \ref{dual:rrnur}]
Set $\nu:=\nu_r(H)$.
By applying Lemma \ref{decentdegree} with $\Delta=3^{\binom{r+1}{2}}$, we have $\Delta(H)\geq 3^{\binom{r+1}{2}} \nu +1$.  Now by Lemma \ref{lem:fatedge}, we are done or we have an edge of multiplicity at least $\nu+1$ in which case we are done by Proposition \ref{fatedgetodegree}.
\end{proof}

\subsection{Theorem \ref{dual:rrnur-1}}\label{sec:nur-1}

\begin{proposition}\label{fatedgetodegreer}
Let $r\geq 4$ and let $H$ be an $r$-partite multi-hypergraph with $n$ edges and set $\nu:=\nu_{r-1}(H)$.  If $H$ has an edge of multiplicity at least $\nu+1$, then $\Delta(H)\geq \frac{r-1}{r}n-\binom{r}{2}\nu$.
\end{proposition}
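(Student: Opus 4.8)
The plan is to mirror the structure of Proposition \ref{fatedgetodegree}, but with the key difference that here we work with $\nu_{r-1}$ rather than $\nu_r$, so a family of $r$ non-cross-intersecting sets no longer yields a contradiction — only a family of $r-1$ does. Let $e=\{u_1,\dots,u_r\}$ be the edge of multiplicity at least $\nu+1$, and for each $i\in[r]$ let $F_i$ be the set of edges avoiding $u_i$. If some $d(u_i)$ is already large, say $d(u_i)\geq \frac{r-1}{r}n-\binom{r}{2}\nu$ for some $i$, we are done; so I would assume every $|F_i|$ is correspondingly large, i.e. $|F_i|\geq \frac{n}{r}+c\nu$ for an appropriate constant $c$. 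The goal is to reach a contradiction with $\nu_{r-1}(H)\leq \nu$ or to directly extract a high-degree vertex.

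First I would try to bound the pairwise overlaps $|F_i\cap F_j|$. Unlike the $\nu_r$ case, $e,F_i\cap F_j,F_{\text{(rest)}}$ would be a family of $r$ sets (one fat edge plus the intersection plus the remaining $F_k$'s), which is too many to contradict $\nu_{r-1}$. So a direct ``$|F_i\cap F_j|\leq\nu$'' claim is unavailable, and this is exactly where the proof diverges. Instead, the plan is to argue that if two of the $F_i$ have large intersection, one can apply Lemma \ref{ballbin_precise} (with $E$ as the fat edge together with the small collection of remaining $F_k$'s and $F_i\cap F_j$) to force alternative \ref{b1} — producing a vertex of degree at least $\nu+1$, i.e. an even fatter edge, or directly a vertex of the required degree. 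This parallels the final paragraph of Case (iii) in Lemma \ref{lem:fatedge}, where the dichotomy ``either some $|F_i\cap F_j|$ is large, handled by Lemma \ref{ballbin_precise}, or all overlaps are small'' is precisely the engine. I would therefore split into the case where some overlap is large (handled via Lemma \ref{ballbin_precise} giving a fat edge) and the case where all overlaps satisfy $|F_i\cap F_j|\leq c'\nu$.

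In the small-overlap case, set $F_i^*=F_i\setminus\bigcup_{j\neq i}F_j$, so that by inclusion-exclusion $|F_i^*|\geq |F_i|-(r-1)c'\nu \geq \frac{n}{r}+(\text{const})\nu$. By construction the $F_i^*$ are pairwise disjoint, and an edge in $F_i^*$ contains $u_j$ for every $j\neq i$; hence if $F_1^*,\dots,F_r^*$ were cross-intersecting, the common vertex could only lie in the one part not forced, analogous to the $V_r$-localization argument at the end of Lemma \ref{lem:fatedge}(iii). The crucial move is to \emph{discard one} of these $r$ sets: since we only have a bound on $\nu_{r-1}$, I would select the index giving the smallest contribution and work with the remaining $r-1$ sets $F_1^*,\dots,F_{r-1}^*$ (say), which must then be cross-intersecting, forcing the intersection into a single common part. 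Applying Lemma \ref{ballbin_precise} to these $r-1$ sets yields a vertex incident with at least $|F_i^*|-\nu$ edges from each, hence of degree at least $(r-1)\bigl(\frac{n}{r}-O(\nu)\bigr)$ after accounting for shared edges — but I must be careful, as the relevant count is a \emph{single} vertex's degree, so the bound should come out to roughly $\frac{r-1}{r}n$ minus a $\binom{r}{2}\nu$ error, matching the statement.

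The main obstacle I anticipate is the bookkeeping of constants in the error term: getting exactly $\binom{r}{2}\nu$ rather than a looser multiple. The overlap bound $|F_i\cap F_j|\leq c'\nu$, the number of pairs ($\binom{r-1}{2}$ or $\binom{r}{2}$) subtracted when forming the $F_i^*$, and the $-\nu$ loss per application of Lemma \ref{ballbin_precise} must all be tracked precisely and combined to total $\binom{r}{2}\nu$. A secondary subtlety is justifying the choice of which $F_i^*$ to discard and confirming that the remaining family is genuinely forced to cross-intersect (using $\nu_{r-1}(H)\leq\nu$ applied to $r-1$ sets each of size $>\nu$), together with verifying that the common part is uniquely determined so that the degree accrues at a single vertex.
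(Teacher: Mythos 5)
Your endgame is sound and matches the paper's: once you have $r-1$ \emph{pairwise disjoint} families of edges, each of size roughly $\frac{n}{r}$, such that any cross-intersection is forced into the single part $V_r$, Lemma \ref{ballbin} together with $\nu_{r-1}(H)\le\nu$ yields a vertex of $V_r$ of degree about $\frac{r-1}{r}n$, the degrees adding because the families are disjoint. The genuine gap is your large-overlap case, and it fails as described. The family you feed into Lemma \ref{ballbin_precise} --- the fat edge $E$, the set $F_i\cap F_j$, and the remaining $F_k$'s --- has $r$ members, so outcome \ref{b2} (failure to cross-intersect in some part) is \emph{not} a contradiction to $\nu_{r-1}(H)\le\nu$; only a family of $r-1$ sets would be. Worse, outcome \ref{b1} is impossible for this family: any vertex incident with even one copy of $e$ must be one of $u_1,\dots,u_r$, say $u_m$; but $u_m$ is incident with \emph{no} edge of $F_m$ (if $m\notin\{i,j\}$) and no edge of $F_i\cap F_j$ (if $m\in\{i,j\}$), while \ref{b1} demands incidence with a positive number of edges from every member, since $|F_m|>\frac{n}{r}+\binom{r}{2}\nu$. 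And even if you could extract ``an even fatter edge,'' that is no progress here: unlike in Lemma \ref{lem:fatedge}, where a fat vertex contradicts the maximality of $U$, in this proposition you already have a fat edge and there is nothing left to contradict. Finally, the large-overlap case cannot be dodged: since $\sum_i|F_i|>n+r\binom{r}{2}\nu$ while $\bigl|F_1\cup\dots\cup F_r\bigr|\le n$, inclusion-exclusion forces some pair with $|F_i\cap F_j|>r\nu$; in fact the paper shows that in the true structure $|F_i^*|\le\nu$ for every $i\in[r-2]$, so your small-overlap endgame (with constants tight enough to give the error term $\binom{r}{2}\nu$) never actually applies.

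The missing idea is the paper's mechanism for staying at $r-1$ families when overlaps are large: merge intersections into single members. The paper proves three claims of exactly this type: $|F_h\cap F_i\cap F_j|\le\nu$ (Claim \ref{clm:3way}, via the $r-1$ families $e$, $F_h\cap F_i\cap F_j$, and the remaining $F_k$'s); not both $|F_h\cap F_i|>\nu$ and $|F_j\cap F_k|>\nu$ for disjoint pairs (Claim \ref{clm:2way2}, via $e$, $F_h\cap F_i$, $F_j\cap F_k$, and the rest); and not both $|(F_h\cap F_i)^*|>\nu$ and $|F_j^*|>\nu$ (Claim \ref{clm:1way2way}, via $(F_h\cap F_i)^*$, $F_j^*$, and the rest). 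These pin down a unique ``hub'' pair, without loss of generality $\{F_{r-1},F_r\}$, carrying the large overlap, with every other pairwise overlap and every $F_i^*$ ($i\le r-2$) of size at most $\nu$; the correct disjoint family for the endgame is then $(F_1\cap F_r)^*,\dots,(F_{r-2}\cap F_r)^*,(F_{r-1}\cap F_r)^*$, each of size greater than $\frac{n}{r}+\nu$, rather than your $F_1^*,\dots,F_{r-1}^*$. Without a substitute for these three claims, your outline does not go through.
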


\begin{proof}
Assume there exists an edge $e=\{u_1, \dots, u_r\}$ of multiplicity at least $\nu+1$.  For all $i\in [r]$, let $F_i$ be the set of edges which avoid $u_i$.  
If $|F_i|\leq \frac{n}{r}+\binom{r}{2}\nu$ for some $i\in [r]$, then $\Delta(H)\geq \frac{r-1}{r}n-\binom{r}{2}\nu$ and we are done; so suppose $|F_i|> \frac{n}{r}+\binom{r}{2}\nu \ge (r-1)\nu +1$ for all $i\in [r]$.
Let $F=F_1\cup \dots \cup F_r$.

\begin{claim}\label{clm:3way}
For all distinct $h,i,j\in [r]$, $|F_h\cap F_i\cap F_j|\leq \nu$.
\end{claim}

\begin{proofclaim}
Suppose for contradiction that $|F_h\cap F_i\cap F_j|\geq \nu+1$ for some distinct $h,i,j\in [r]$ and without loss of generality suppose $\{h,i,j\}=[3]$.  Now $e, F_1\cap F_2\cap F_3, F_4, \dots, F_r$ is a collection of $r-1$ sets violating $\nu_{r-1}(H)\leq \nu$.  
\end{proofclaim}

\begin{claim}\label{clm:2way2}
For all distinct $h,i,j,k\in [r]$, $|F_h\cap F_i|\leq \nu$ or $|F_j\cap F_k|\leq \nu$.
\end{claim}

\begin{proofclaim}
Suppose for contradiction that $|F_h\cap F_i|\geq \nu+1$ and $|F_j\cap F_k|\geq \nu+1$ for some distinct $h,i,j,k\in [r]$ and without loss of generality suppose $\{h,i,j,k\}=[4]$.  Now $e, F_1\cap F_2, F_3\cap F_4, F_5, \dots, F_r$ is a collection of $r-1$ sets violating $\nu_{r-1}(H)\leq \nu$.  
\end{proofclaim}

For all $S\subseteq [r]$, let $\left(\cap_{i\in S}F_i\right)^*=\left(\cap_{i\in S}F_i\right)\setminus \left(\cup_{j\in [r]\setminus S}F_j\right)$.  In other words $\left(\cap_{i\in S}F_i\right)^*$ is the collection of elements which are in all of the sets $F_i$, $i\in S$, but none of the other sets $F_j$, $j\in [r]\setminus S$.  

\begin{claim}\label{clm:1way2way}
For all distinct $h,i,j\in [r]$, $|(F_h\cap F_i)^*|\leq \nu$ or $|F_j^*|\leq \nu$.
\end{claim}

\begin{proofclaim}
Suppose for contradiction that $|(F_h\cap F_i)^*|\geq \nu+1$ and $|F_j^*|\geq \nu+1$ for some distinct $h,i,j\in [r]$ and without loss of generality suppose $\{h,i,j\}=[3]$.  Now the sets $(F_1\cap F_2)^*, F_3^*, F_4, \dots, F_r$ is a collection of $r-1$ sets violating $\nu_{r-1}(H)\leq \nu$.
\end{proofclaim}

Since $|F_i|> \frac{n}{r}+\binom{r}{2}\nu $ for all $i\in [r]$, 
inclusion-exclusion implies that $|F_i\cap F_j|\geq (r-1)\nu+1$ for some distinct $i,j\in [r]$; without loss of generality, say $i=r-1$ and $j=r$.  Furthermore, by Claim \ref{clm:3way} we must have that $|(F_{r-1}\cap F_{r})^*|\geq \nu+1$.  Thus by Claim \ref{clm:2way2}, we have that for all distinct $i,j\in [r-2]$, $|F_i\cap F_j|\leq \nu$, and by Claim \ref{clm:1way2way} we have that for all $i\in [r-2]$, $|F_i^*|\leq \nu$.

So for all $i\in [r-2]$, we have $|F_i\setminus (F\setminus F_i)|\leq \nu$, $|F_i\cap F_{r-1}\cap F_r|\leq \nu$, and for all $j\in [r-2]\setminus\{i\}$, $|F_i\cap F_j|\leq \nu$, thus 
$$|F_i\cap F_{r-1}|+|F_i\cap F_r|\geq |F_i|-(r-1)\nu\geq \frac{n}{r}+\binom{r}{2}\nu-(r-1)\nu\geq \frac{n}{r}+2\nu.$$  
Let $i\in [r-2]$.  Without loss of generality, suppose $|F_i\cap F_r|\geq \frac{1}{2}(|F_i\cap F_{r-1}|+|F_i\cap F_r|)\geq \frac{n}{2r}+\nu>\nu$.  Thus by Claim \ref{clm:2way2} we have that for all $j\in [r-2]\setminus\{i\}$, $|F_j\cap F_{r-1}|\leq \nu$ which in turn implies that for all $i\in [r-2]$, $|(F_i\cap F_r)^*|\geq |F_i|-r\nu>\frac{n}{r}+\nu$.
 By Claim \ref{clm:1way2way} this implies that $|F_{r-1}^*|\leq \nu$.  Thus $|(F_{r-1}\cap F_{r})\setminus (F_1\cup\dots \cup F_{r-2})|\geq |F_{r-1}|-\nu-(r-2)\nu\geq \frac{n}{r}+\nu$.  Now we have a collection of $r-1$ sets, $(F_1\cap F_r)^*, \dots, (F_{r-2}\cap F_r)^*, (F_{r-1}\cap F_{r})^*$ all with more than $\frac{n}{r}+\nu$ elements.  Now applying Lemma \ref{ballbin} (with $a_1=\dots=a_{r-1}=1$) to the collection of $r-1$ sets, we cannot have \ref{b2'} by the bound on $\nu_{r-1}(H)$, so we must have \ref{b1'} which gives us a vertex in $V_r\setminus \{u_r\}$ with degree at least $(r-1)((\frac{n}{r}+\nu)-\nu)=\frac{r-1}{r}n$. 
\end{proof}

Now we prove that if $r\geq 3$ and $H$ is an $r$-partite $r$-uniform hypergraph with $n$ edges and $\nu_{r-1}(H)\leq \frac{n}{3^{\binom{r+1}{2}+r}}$, then $\Delta(H)\geq \frac{(r-1)n}{r}-\binom{r}{2}\nu_{r-1}(H)$.

\begin{proof}[Proof of Theorem \ref{dual:rrnur-1}]
Set $\nu:=\nu_{r-1}(H)$.  
By Lemma \ref{decentdegree} (with $\Delta=3^{\binom{r+1}{2}}$), we have $\Delta(H)\geq 3^{\binom{r+1}{2}} \nu +1$.  Now by Lemma \ref{lem:fatedge} we are done, or we have an edge of multiplicity at least $\nu+1$ in which case we are done by Proposition \ref{fatedgetodegreer}.
\end{proof}

\section{Conclusion}

We were able to solve Problem \ref{prob:gen} in all cases corresponding to Theorem \ref{thm:Gy} except when $k=2$ and $r\geq 4$.  However because of Lemma \ref{decentdegree} and Lemma \ref{lem:fatedge}, in order to solve the case $k=2$ and $r\geq 4$ it suffices to prove Conjecture \ref{con:k=2}.  It would be very interesting to prove Conjecture \ref{con:k=2} even in the case $k=4$.  

Another possible direction for further study involves replacing large monochromatic components with long monochromatic paths.  Letzer showed that in every 2-coloring of $G(n,p)$ with $p=\frac{\omega(1)}{n}$, there is w.h.p., a monochromatic cycle (path) of order at least $(2/3-o(1))n$.  Bennett, DeBiasio, Dudek, and English \cite{BDDE} generalized this result showing that if $p=\frac{\omega(1)}{n^{k-1}}$, then a.a.s. there is a monochromatic loose-cycle (loose-path) of order at least $(\frac{2k-2}{2k-1}-o(1))n$ in every 2-coloring of $H^k(n,p)$.  Both of those results use sparse regularity and implicitly only use the fact $\alpha_k(G)=o(n)$, so we can retroactively rephrase their result as follows.

\begin{theorem}[Bennett, DeBiasio, Dudek, and English \cite{BDDE}]
If $G$ is a $k$-uniform hypergraph on $n$ vertices with $\alpha_k(H)=o(n)$, then in every $2$-coloring of the edges of $G$, there exists a monochromatic loose-cycle (loose-path) of order at least $(\frac{2k-2}{2k-1}-o(1))n$.
\end{theorem}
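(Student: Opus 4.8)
The plan is to reproduce the sparse-regularity plus connected-matching method of \cite{BDDE}, the whole point being that this method uses $G$ only through the bound $\alpha_k(G)=o(n)$. First I would apply a sparse hypergraph regularity lemma to the $2$-colored $G$, obtaining a partition of $V(G)$ into a bounded number of clusters $W_1,\dots,W_M$ (with a negligible exceptional set) so that all but an $\eps$-fraction of the $k$-tuples of clusters are $\eps$-regular in each of the two colors. I would then build a reduced $2$-colored $k$-uniform hypergraph $R$ on the clusters, placing an edge on a $k$-tuple when it is regular with density bounded away from $0$ in some color, and coloring that reduced edge by a majority color.

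As is standard, finding a long monochromatic loose path/cycle in $G$ reduces to finding a large monochromatic \emph{connected matching} in $R$: a super-regularized matching edge of $R$ blows up to a loose path covering almost all vertices of its clusters, and the connectivity lets me splice consecutive pieces together through shared clusters and finally close the path into a cycle. A monochromatic connected matching whose edges span a $\of{\frac{2k-2}{2k-1}-o(1)}$-fraction of the clusters yields a monochromatic loose path/cycle of the claimed order; for $k=2$ this recovers the factor $\tfrac{2}{3}$, and $\tfrac{2k-2}{2k-1}$ is exactly the optimal monochromatic connected-matching density in an expansive $2$-colored $k$-uniform hypergraph.

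The heart of the argument, and the step where $\alpha_k(G)=o(n)$ enters, is to guarantee that the $2$-colored $R$ is expansive enough for this connected-matching bound to hold -- morally, that $R$ behaves like a $2$-coloring of a near-complete $K_M^k$. The natural attempt is to show $\alpha_k(R)=o(M)$ by passing a hole in $R$ down to a hole in $G$; this is exactly the delicate point, because a $k$-partite hole on cluster families $X_1,\dots,X_k$ only forces the vertex unions $\of{\bigcup_{c\in X_i}c}$ to carry $o(n^k)$ edges of $G$ (those on irregular or sparse tuples), and a subcubic number of edges need not leave a \emph{linear}-sized empty transversal box (as a random sparse configuration and the K\H{o}v\'ari--S\'os--Tur\'an phenomenon show). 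The right statement to prove is therefore a robust form of inherited expansion, extracted directly from $\alpha_k(G)=o(n)$ together with the regularity counting lemma, rather than a naive exact-hole transfer.

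The main obstacle is thus twofold and essentially technical: assembling the correct sparse hypergraph regularity lemma and the matching loose-path embedding (blow-up) lemma at the relevant sparsity, and formulating the inherited-expansion property of $R$ in a way strong enough to force a monochromatic connected matching of density $\frac{2k-2}{2k-1}-o(1)$. Since the statement merely repackages the theorem of \cite{BDDE}, the genuinely new content is the observation, to be verified step by step, that their regularity argument consumes no property of the host hypergraph beyond $\alpha_k(G)=o(n)$, so that the conclusion extends from $H^k(n,p)$ to every such expansive $G$.
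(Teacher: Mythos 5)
Your proposal takes essentially the same approach as the paper, which in fact gives no proof of this statement at all: it merely cites \cite{BDDE} and asserts that their sparse-regularity/connected-matching argument ``implicitly only uses'' the hypothesis $\alpha_k(G)=o(n)$, which is exactly the observation your outline is built around. The delicate point you flag---that the reduced hypergraph's expansion must be inherited in a robust form from $\alpha_k(G)=o(n)$ rather than by a naive exact-hole transfer---is precisely the verification the paper leaves implicit, so your account is, if anything, more explicit than the paper's about where the remaining work lies.
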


The idea is that it would be nice to extend the above theorem to hold when $\alpha_k(G)$ can be considerably larger (especially in the case $k=2$).

There are two results in the literature which implicitly broach this subject.  Balogh, Bar\'at, Gerbner, Gy\'arf\'as, S\'ark\"ozy \cite{BBGGS} proved that in every 2-coloring of the edges of a graph $G$ on $n$ vertices there exist two vertex disjoint monochromatic paths covering at least $n-1000(50\alpha_2(G))^{\alpha_2(G)}$ vertices.  Letzter \cite{L} implicity proved that in every 2-coloring of every graph $G$ on $n$ vertices there is a monochromatic path of order at least $\frac{n}{2}-2\alpha_2(G)$.  

So a particular case of the general problem we are interested in is the following. 

\begin{problem}
Given $n$ sufficiently large, determine the largest value of $\alpha$ such that if $G$ is a graph on $n$ vertices with $\alpha_2(G)\leq \alpha$, then in every 2-coloring of $G$ there is a monochromatic path of order greater than $n/2$.
\end{problem}

Finally, we mention that the best upper bounds on the size-Ramsey number of a path come from random $d$-regular graphs $G(n,d)$ (see \cite{DP1}).  An upper bound on $\mc_2(G(n,d))$ would give an upper bound on the longest monochromatic path.  However, determining an upper bound on the largest monochromatic component in an arbitrary 2-coloring of $G(n,d)$ for small $d$ falls outside the purview of this paper (partly since $\alpha_2$ can be large in this case).  So we raise the following problem.  

\begin{problem}
Determine bounds on $\mc_2(G(n,d))$ for $d\geq 5$.  More generally, determine bounds on $\mc_r(G(n,d))$ for $d\geq 2r+1$.
\end{problem}

Note that a result of Anastos and Bal \cite{AB} implies that $\mc_r(G(n,d))=o(n)$ when $d\leq 2r$.

\end{document}